\font\bbbld=msbm10 scaled\magstephalf
\newcommand{\bi}{\bar{i}}
\newcommand{\bj}{\bar{j}}
\newcommand{\bk}{\bar{k}}
\newcommand{\bl}{\bar{l}}
\newcommand{\bm}{\bar{m}}
\newcommand{\bn}{\bar{n}}
\newcommand{\bq}{\bar{q}}
\newcommand{\bz}{\bar{z}}
\newcommand{\bJ}{\bar{J}}
\newcommand{\bL}{\bar{L}}
\newcommand{\bI}{\bar{I}}
\newcommand{\bK}{\bar{K}}
\newcommand{\balpha}{\bar{\alpha}}
\newcommand{\bzeta}{\bar{\zeta}}
\newcommand{\bpartial}{\bar{\partial}}
\newcommand{\fg}{\mathfrak{g}}
\newcommand{\fI}{\mathfrak{I}}
\newcommand{\fRe}{\mathfrak{Re}}
\newcommand{\bfC}{\hbox{\bbbld C}}
\newcommand{\bfG}{\hbox{\bbbld G}}
\newcommand{\bfN}{\hbox{\bbbld N}}
\newcommand{\bfR}{\hbox{\bbbld R}}
\newcommand{\cC}{\mathcal{C}}
\newcommand{\cL}{\mathcal{L}}
\newcommand{\ol}{\overline}
\newtheorem{theorem}{Theorem}[section]
\newtheorem{lemma}[theorem]{Lemma}
\newtheorem{proposition}[theorem]{Proposition}
\newtheorem{corollary}[theorem]{Corollary}
 \theoremstyle{definition}
\newtheorem{definition}[theorem]{Definition}
\theoremstyle{remark}
\newtheorem{remark}{Remark}
\numberwithin{equation}{section}
\begin{document}

\title[fully nonlinear Parabolic equations]
{Fully nonlinear parabolic equations of real forms on Hermitian manifolds}
\author{Mathew George}
\address{Department of Mathematics, Purdue University,
         West Lafayette, IN 47907 }
\email{georg233@purdue.edu}
\author{Bo Guan}
\address{Department of Mathematics, Ohio State University,
         Columbus, OH 43210, USA}

\email{guan@math.ohio-state.edu}

\date{}

\begin{abstract} 
Over many decades fully nonlinear PDEs, and the complex Monge-Amp\`ere equation in particular 
played a central role in the study of complex manifolds. 
Most previous works focused on problems that can be expressed through equations involving real $(1,1)$ forms.
As many important questions, especially those linked to higher cohomology classes in complex geometry involve real $(p, p)$ forms for $p > 1$, there is a strong need to develop PDE techniques to study them. 
In this paper we consider a fully nonlinear equation for $(p, p)$ forms on compact Hermitian manifolds.
We establish the existence of classical solutions for a large class of these equations
by a parabolic approach, proving the long-time existence and convergence of solutions to the elliptic case.


{\em Mathematical Subject Classification (MSC2020):}
35K10, 35K55, 53C55, 58J35.
\end{abstract}

\maketitle

\section{Introduction}
\label{gg-I}
\setcounter{equation}{0}
\medskip

In the landmark work of Chern~\cite{Chern46} certain differential forms constructed from Hermitian metrics seemed
to effectively describe geometric and topological information associated with complex manifolds, which are now called Chern classes. Studies of these characteristic classes have been undertaken from different points of view in geometry and topology. It would be desirable to also have strong analytical tools to study them. Motivated by this, we study a class of nonlinear PDEs involving real $(p,p)$ forms on Hermitian manifolds.

The first Chern class for a compact K\"ahler manifold consists of closed real $(1,1)$ forms and is 
represented by the Ricci form of the metric. The Calabi-Yau theorem  
~\cite{Yau78} asserts that any $(1,1)$ form in the first Chern class of a compact K\"ahler manifold $(M^n, \omega)$ corresponds to the Ricci form of some K\"ahler metric on $M$ 
in the cohomology class of $\omega$. Initially conjectured by Calabi, this was proved by Yau~\cite{Yau78} by solving the complex Monge-Amp\`ere equation on compact K\"ahler manifolds.
 Since the works of Yau~\cite{Yau78} and Aubin~\cite{Aubin78},
 the complex Monge-Amp\`ere equation has been a powerful tool in the study of complex geometry, and 
 more recently, there have been growing interests in fully nonlinear PDEs on complex manifolds, such as the J-equation and deformed Hermitian Yang-Mills (dHYM) equation.
 However, most investigations have been focused on equations or systems involving real 
 $(1,1)$  forms.
 Because of their important role,  
especially in the study of higher cohomology classes, it is natural to extend the nonlinear PDE theory to equations of higher order forms, more precisely real $(p, p)$ forms for $p > 1$. This is our primary goal in the current paper,
and we shall take a parabolic approach.

Nonlinear parabolic methods have become increasingly prevalent in complex geometry.
In 1985, Cao~\cite{Cao1985} first studied K\"ahler-Ricci flow and using which gave an alternative proof of Calabi-Yau theorem. Donaldson~\cite{Donaldson85} used nonlinear parabolic equations to establish the Kobayashi-Hitchin correspondence between stable vector bundles on projective algebraic surfaces and Hermitian-Einstein connections.
Later on Uhlenbeck and Yau~\cite{UY86} extended the results to compact K\"ahler manifolds.
In 1999, Donaldson~\cite{Donaldson99a} proposed the study of J-flow on K\"ahler manifolds, which seemed the first 
fully nonlinear PDE other than the complex Monge-Amp\`ere equation that appeared in complex geometry. Song and Weinkove~\cite{SW08} solved J-flow by finding a key necessary and sufficient cone condition for the solvability
(see also~\cite{Weinkove04, Weinkove06}); their results were generalized by
Fang-Lai-Ma~\cite{FLM11} 
to Hessian quotient equations. 
On Hermitian manifolds, Gill~\cite{Gill11} first considered the parallel of K\"ahler-Ricci flow, followed by  
Sun~\cite{Sun15} who treated complex Hessian quotient parabolic equations,
extending results in \cite{FLM11} and ~\cite{Gill11},
while Streets and Tian~\cite{ST10, ST11, ST13} studied curvature flows determined by the second Chern-Ricci form. More recently, Phong-Picard-Zhang~\cite{PPZ2, PPZ4, PPZ19} introduced new geometric flows,
while Phong and T\^{o}~\cite{PT} treated fully nonlinear parabolic equations of the form
\begin{equation}
\label{gg-PI10}
\frac{\partial \phi}{\partial t} = f (\lambda (\chi + \sqrt{-1} \partial \bpartial \phi)) - \psi
\end{equation}
on compact Hermitian manifolds 
 in a general framework established by Caffarelli-Nirenberg-Spruck~\cite{CNS3} for elliptic equations. 

In this paper we consider a new class of fully nonlinear parabolic equations which have significant differences from equation~\eqref{gg-PI10}. 
 The geometric considerations while setting up these equations are twofold. Firstly, many natural invariants of a complex manifold are expressed in terms of real $(p,p)$ forms for $p>1$, and secondly in most cases these forms are defined by certain symmetric relations. This leads us to consider the following class of equations

Let $(M^n, \omega)$ be a compact Hermitian manifold of dimension $n \geq 2$,
and $\Omega$ a real $(p,p)$ form on $M$ where $p$ is an integer, $2 \leq p \leq n-1$. 
As we shall see in Section~\ref{PR}, there is a natural way to define the eigenvalues of 
$(p, p)$ forms with respect to the metric $\omega$. 
For a function $\phi \in C^{\infty} (M)$ 
let 
\[ \Lambda (\Omega_\phi) \equiv 
   \Lambda (\Omega + \sqrt{-1} \partial \bpartial \phi \wedge \omega^{p-1}) 
= (\Lambda_1, \ldots, \Lambda_N), \]
where
\[ N = \frac{n!}{p! (n-p)!}, \]
denote the {eigenvalues} of $\Omega_{\phi} = \Omega + \sqrt{-1}\partial \bpartial \phi \wedge \omega^{p-1}$ 
with respect to $\omega$. Let 
$f$ be a symmetric function of $N$ variables.
We shall consider the equation 
\begin{equation}
\label{I1}
\begin{aligned}
  \frac{\partial \phi}{\partial t} 
  =  f (\Lambda(\Omega + \sqrt{-1}\partial \bpartial \phi \wedge \omega^{p-1})) - \psi 
    \end{aligned}
\end{equation}
where, in geometric applications, $\Omega$ often depends on $\phi$ and its gradient, with the initial condition
\begin{equation}
\label{gg-I20}
\begin{aligned}
    &\phi(z, 0)=\phi_0\in C^{\infty}(M).
    \end{aligned}
\end{equation}

Note that if $\omega$ is K\"ahler, that is $d \omega = 0$, and $\Omega$ is closed, then 
$\Omega_{\phi}$ is also closed and belongs to the Bott-Chern cohomology class of $\Omega$.
So equation~~\eqref{I1} preserves Bott-Chern cohomology classes.

When $p = 1$, equation~\eqref{I1} reduces to \eqref{gg-PI10}.
In spite of the common background they share in terms of the structure conditions described below, 
for general $p > 1$ equation~\eqref{I1} is significantly different from equation~\eqref{gg-PI10}, which makes it 
much more difficult to study, and seems to have rarely been treated before. 
To the best of our knowledge, the first fully nonlinear equation involving real $(p, p)$ forms for $p > 1$
was the form type Calabi-Yau equation for positive $(n-1, n-1)$ forms on balanced manifolds 
introduced by Fu-Wang-Wu~\cite{FWW11, FWW15}; see also 
Tosatti-Weinkove~\cite{TW17, TW2019} and their work with Sz\'ekelyhidi~\cite{STW17} which resolved the Gauduchon conjecture, extending the Calabi-Yau Theorem to non-K\"ahler metrics. 
 The approach in these papers was based on a natural duality between $(n-1, n-1)$ forms and $(1,1)$ forms,
 and the fact that a positive
 $(n-1, n-1)$ form $\Omega$ admits the $(n-1)$-th root $\eta$, thereby $\Omega = \eta^{n-1}$. Through the Hodge star operator, their equations reduce to the elliptic version of ~\eqref{gg-PI10} where $\chi$ also depends linearly on the gradient of $\phi$, for $(1,1)$ forms.

We shall follow \cite{CNS3} to assume that $f$ is a symmetric function defined in a symmetric open convex cone $\Gamma \subset \mathbb{R}^N$ 
(of higher dimension $N > n$ unless $p = 1$, or $n-1$) with vertex at the origin 
containing the positive cone
\begin{equation}
\label{P1}
    \Gamma_N= \{\Lambda\in \mathbb{R}^N: \Lambda_i > 0\}\subset\Gamma,
\end{equation}
and satisfies the 
\begin{equation}
\label{P2}
    f_i \equiv \frac{\partial f}{\partial \Lambda_i} \geq 0  
       \text{ in $\Gamma$, $1\leq i\leq N$},
\end{equation}
\begin{equation}
\label{P3}
    \text{$f$ is a concave function in $\Gamma$}
\end{equation}
and
\begin{equation}
\label{P4}
    \sup\limits_{\partial\Gamma}f < \inf\limits_M \psi. 
\end{equation}
We call a function $\phi$ \textit{admissible} if 
$\Lambda (\Omega_{\phi}) \in \Gamma$ in $M \times \{t > 0\}$
so that equation~\eqref{I1} is degenerate parabolic for admissible solutions. %

\begin{proposition}
\label{gg-prop-I10}
Under condition~\eqref{P2}, equation~\eqref{I1} is degenerate parabolic with respect to admissible solutions.
\end{proposition}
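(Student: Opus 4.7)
The plan is to interpret \emph{degenerate parabolic} as the requirement that the coefficient matrix $F^{i\bar j}:=\partial f(\Lambda(\Omega_\phi))/\partial \phi_{i\bar j}$ of the linearization of \eqref{I1} in the second derivatives of $\phi$ is a nonnegative-definite Hermitian $n\times n$ matrix whenever $\Lambda(\Omega_\phi)\in\Gamma$. Because the dependence of the right-hand side of \eqref{I1} on $\phi$ enters only through $\Omega_\phi$, no lower-order terms interfere with this symbol; it suffices to establish $F^{i\bar j}\xi_i\bar\xi_j\geq 0$ for every $\xi\in\bfC^n$.

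I would compute $F^{i\bar j}$ by a two-stage chain rule. As recalled in Section~\ref{PR}, the eigenvalues $\Lambda$ of a $(p,p)$ form with respect to $\omega$ are those of a Hermitian $N\times N$ matrix $A$ built linearly from the form in an $\omega$-orthonormal frame. By the classical formula for derivatives of symmetric functions of eigenvalues, $\tilde f^{KL}:=\partial f/\partial A_{KL}$ is Hermitian nonnegative-definite precisely under \eqref{P2}. Letting $\mathcal L$ be the linear map sending a Hermitian $(1,1)$ matrix $H$ to the $N\times N$ matrix representing $\sqrt{-1}H\wedge\omega^{p-1}$, the chain rule yields
\[ F^{i\bar j}\xi_i\bar\xi_j \;=\; \tr\bigl(\tilde f \cdot \mathcal L(\xi\otimes\bar\xi)\bigr). \]

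The step I expect to need the most care is the key positivity lemma: $\mathcal L$ sends nonnegative-definite Hermitian $(1,1)$ matrices to nonnegative-definite $N\times N$ matrices. Granting this, $\mathcal L(\xi\otimes\bar\xi)$ is nonnegative-definite, and since the trace of a product of two nonnegative-definite Hermitian matrices is nonnegative, $F^{i\bar j}\xi_i\bar\xi_j\geq 0$ as required. To prove the lemma I would diagonalize $H=\mathrm{diag}(h_1,\ldots,h_n)$ with $h_k\geq 0$ in a local $\omega$-orthonormal frame and check that $\sqrt{-1}H\wedge\omega^{p-1}$ becomes diagonal in the induced basis of unit $(p,p)$ vectors, with nonnegative diagonal entries proportional to $\sum_{k\in I}h_k$ indexed by $p$-subsets $I\subset\{1,\ldots,n\}$; the substance of the proposition is to confirm that the intrinsic eigenvalue definition of Section~\ref{PR} coincides with this diagonalization.
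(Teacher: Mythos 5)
Your argument is correct, but it reaches the conclusion by a different route than the paper. The paper's proof (Lemma~\ref{lemma-P20}) computes $Z_{I\bJ}$ in terms of $\fg_{i\bj}$ as in \eqref{gg-C2-115} and then establishes the combinatorial identity \eqref{gg-C2-124}, which expresses $G^{i\bj}$ through the principal $(n-p+1)\times(n-p+1)$ submatrices $\{F^{I'_i\bar{I'}_j}\}$ indexed by $(p-1)$-tuples $I'$; the quadratic form $\sum G^{i\bj}\xi_i\bar\xi_j$ is then a sum over $I'$ of quadratic forms of these positive semidefinite submatrices evaluated at sign-adjusted restrictions of $\xi$, as in \eqref{gg-C2-125}. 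You instead pair the full matrix $\{F^{I\bJ}\}$ (positive semidefinite by \eqref{P2} and the CNS fact, exactly as the paper uses) against the image of the rank-one matrix $\xi\otimes\bar\xi$ under the linear map $H\mapsto \sqrt{-1}H\wedge\omega^{p-1}$, and reduce to two facts: that this map preserves positive semidefiniteness (proved by diagonalizing $H$ in a unitary frame, which is the same local computation as \eqref{gg-C2-115} together with the coordinate-invariance of the matrix representation under unitary changes, a $*$-congruence), and that the trace pairing of two positive semidefinite Hermitian matrices is nonnegative. This is sound; the only point to watch is that under the natural index convention the pairing arising from the chain rule is the entrywise sum $\sum F^{I\bJ}[\mathcal L(\xi\otimes\bar\xi)]_{I\bJ}$, i.e.\ a trace against the transpose (conjugate) of $\mathcal L(\xi\otimes\bar\xi)$, which is harmless since conjugation preserves positive semidefiniteness. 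What your softer argument does not produce is the identity \eqref{gg-C2-124} itself, which the paper exploits beyond parabolicity: it is the basis for the refined lower bound \eqref{gg-C2-126} of Lemma~\ref{lemma-P10} (with the specific index $\alpha=pN/n$ tied to the rank condition) and reappears in the second-order estimate computation \eqref{gg-C2-125'}. So your approach fully proves the proposition as stated, while the paper's decomposition is the quantitatively stronger tool needed later.
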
 

For general $p > 1$ this is less obvious compared to the case $p = 1$; see Section~\ref{PR} for more details.

Our main focus in this paper will be on equation~\eqref{I1} 
with 
\begin{equation}
\label{P-gg10}
 \Omega = X [\phi] \equiv X (z, \phi, \partial \phi, \bar \partial \phi) 
 \end{equation}
being allowed to depend on $\phi$ and its gradient.
To give some geometric motivations which lead to this more general equation, 
we call a $(p, p)$ form $\Omega$
 {\sl pluriclosed} if $\partial \bar \partial \Omega = 0$, and a $(1,1)$ form $\eta$ is called $p$-{\sl pluriclosed} if $\eta^p$ is pluriclosed.
For $p = 1$, pluriclosed metrics are also called {\em Strong KT metrics} and were extensively studied; see e.g. \cite{Bismut89, FT09, ST10, ST13, ZZ23} and references therein,
while $p$-pluriclosed $(1,1)$ forms are known as Gauduchon~\cite{Gauduchon84} and 
Astheno-K\"ahler~\cite{JY93} for $p = n-1$  and $n-2$, respectively.
Recall that the Aeppli cohomology group is defined as
\[ H^{p,p}_{A} (M, \bfR) = \frac{\{\mbox{pluriclosed real $(p,p)$ forms}\}}
{\{\partial \alpha + \bpartial \balpha:  \alpha\in \Lambda^{p-1, p} (M, \bfR)\}}. \]


Let $\Omega$ be a smooth real $(p,p)$ form and $\gamma \in \bfR$. 
Adopting an idea in 
\cite{STW17} we define for $\phi \in C^2 (M)$
\begin{equation}
\label{gg-I10ab}
X [\phi] = \Omega
   + \frac{\gamma \sqrt{-1}}{2} \partial \phi \wedge \bpartial \omega^{p-1}   
   - \frac{\gamma \sqrt{-1}}{2} \bpartial \phi \wedge \partial \omega^{p-1}
 \end{equation}
  and
\[ \Omega_{\phi} = X [\phi] + \sqrt{-1} \partial \bpartial  \phi \wedge \omega^{p-1}. \]
Then 
\begin{equation}
\label{gg-I10abcc}
 \partial \bpartial \Omega_{\phi} = \partial \bpartial \Omega + (1 - \gamma) \sqrt{-1} 
  \partial \bar\partial \phi \wedge \partial \bar\partial \omega^{p-1}. 
\end{equation}  
When $\gamma = 1$, therefore, $\Omega_{\phi} - \Omega$ is pluriclosed and 
$[\Omega_{\phi} - \Omega] = 0$ in $H^{p,p}_{A} (M, \bfR)$
as
\begin{equation}
\label{gg-I10abcd}
 \Omega_{\phi} - \Omega =   \partial \alpha + \bar\partial \bar\alpha, \;\; 
  \alpha = \frac{1}{2} \sqrt{-1} \bar \partial \phi \wedge \omega^{p-1}. 
  \end{equation} 
 In particular, if $\Omega$ is pluriclosed so is $\Omega_{\phi}$ and 
$[\Omega_{\phi}] = [\Omega] \in H^{p,p}_{A} (M, \bfR)$. 
When $p = n-1$ and $\Omega = \omega_0^{n-1}$ where $\omega_0$ is a Gauduchon metric, this fact was used in \cite{STW17} in the proof of Gauduchon conjecture.
In equation~\eqref{I1} if we replace a pluriclosed $(p,p)$ form $\Omega$ by $X [\phi]$ defined in \eqref{gg-I10ab} for $\gamma = 1$, we see from \eqref{gg-I10abcc} and \eqref{gg-I10abcd} that it preserves the corresponding Aeppli cohomology class of $\Omega$.

We first state a result which is a special case of our main theorem.
Let $\sigma_k$ be the $k$-th elementary symmetric function
\[ \sigma_k (\lambda) = \sum_{i_1 < \cdots < i_k} \lambda_{i_1} \cdots \lambda_{i_k},
\;\; 1 \leq k \leq N  \]
defined in the Garding cone 
\[ \Gamma_k = \{\lambda \in \bfR^N: \sigma_j (\lambda) > 0,\;
\mbox{$\forall$ $1 \leq j \leq k$}\}. \]

\begin{theorem}
\label{theorem-I1k}
Let $\phi_0 \in C^{\infty} (M)$ be a $\Gamma_k$-admissible function:
\[ \Lambda (X [\phi_0] + \sqrt{-1} \partial \bpartial \phi_0 \wedge \omega^{p-1}) \in \Gamma_k 
\;\; \mbox{on $M$} \]
where $X [\phi]$ is defined in \eqref{gg-I10ab}. 
For $k \leq {p N}/{n}$, the equation
\begin{equation}
\label{I1k}
\begin{aligned}
  \frac{\partial \phi}{\partial t} 
      = \,& \sigma_k^{\frac{1}{k}} (\Lambda(X [\phi] + \sqrt{-1}\partial \bpartial \phi \wedge \omega^{p-1})) - \psi
    \end{aligned}
\end{equation}
admits an admissible solution $\phi \in C^{\infty} (M \times \{t > 0\})$ with $\phi |_{t=0} = \phi_0$,
and the normalized function 
\begin{equation}
\label{I2}
    \tilde{\phi} := \phi - \frac{\int_M \phi \omega^n}{\int_M \omega^n}
\end{equation}
converges uniformly to an admissible solution 
$\varphi \in C^{\infty} (M)$ of the elliptic equation
\begin{equation}
\label{I4k}
    \sigma_k^{\frac{1}{k}} (\Lambda (X [\varphi] + \sqrt{-1} \partial \bpartial \varphi \wedge \omega^{p-1})) 
  = \psi + b
\end{equation}
for some constant $b$ as $t \to \infty$.
\end{theorem}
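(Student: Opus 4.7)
The plan is to treat Theorem~\ref{theorem-I1k} by the standard parabolic continuation method: short-time existence, uniform a priori estimates on every finite time interval to obtain long-time existence, and then decay estimates as $t \to \infty$ yielding convergence to a solution of the elliptic equation~\eqref{I4k}. Since $\sigma_k^{1/k}$ is concave and monotone on $\Gamma_k$ (and satisfies~\eqref{P4} for admissible $\phi_0$ after a small shift of $\psi$), Proposition~\ref{gg-prop-I10} gives that~\eqref{I1k} is degenerate parabolic on admissible functions. Standard implicit-function-type arguments for quasilinear parabolic systems, together with the openness of $\Gamma_k$, yield a maximal smooth admissible solution on $M \times [0, T_{\max})$.

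To extend past any finite time and to prepare for convergence, I would establish a priori estimates for $(\phi, \tilde\phi, \partial_t \phi)$ uniform on $[0, T]$ (in fact uniform for all $t$ for $\tilde\phi$ and $\partial_t \phi$). Differentiating~\eqref{I1k} in $t$ shows that $u := \partial_t \phi$ satisfies a linear parabolic equation whose coefficients involve $f_i = \partial f/\partial \Lambda_i$; the maximum principle then gives $\sup_M u(\cdot, t)$ non-increasing and $\inf_M u(\cdot, t)$ non-decreasing, so $\|\partial_t \phi\|_{L^\infty(M\times[0,\infty))}$ is controlled by $\|\partial_t \phi(\cdot, 0)\|_{L^\infty}$, which in turn is bounded using admissibility of $\phi_0$. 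The $C^0$ estimate on $\tilde\phi$ is the first substantial point: following the Song--Weinkove philosophy adapted to $(p,p)$ forms, one runs an auxiliary test function argument (for example $\tilde\phi - A\omega^n/\int_M\omega^n$ against the linearized operator, or a Moser/ABP-type scheme), where the crucial structural ingredient is precisely the cone condition $k \leq pN/n$; this is what ensures the existence of a barrier/concavity relation between $\sigma_k$ and the trace with respect to $\omega$ for $(p,p)$-eigenvalues. With $\|\tilde\phi\|_{C^0}$ controlled, the second-order estimate for $\sqrt{-1}\partial\bar\partial \phi \wedge \omega^{p-1}$ follows from a maximum principle argument on a quantity of the form $\log \Lambda_{\max} + h(|\nabla \phi|^2) + A\tilde\phi$, using concavity of $\sigma_k^{1/k}$, the linear gradient dependence in $X[\phi]$ defined by~\eqref{gg-I10ab}, and the torsion terms of $\omega$ handled by choosing $A$ large. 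The $C^1$ estimate is obtained by interpolation (or a direct Blocki--Guan type argument), and admissibility plus the $C^2$ bound keeps the eigenvalues in a compact subcone of $\Gamma_k$, making~\eqref{I1k} uniformly parabolic; Evans--Krylov (concavity) plus Schauder theory then upgrade to uniform $C^{k,\alpha}$ on any compact time interval. Combined with $L^\infty$ bounds on $\tilde\phi$ and $\partial_t \phi$, these are uniform in $t$.

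Long-time existence $T_{\max} = \infty$ follows at once from these estimates and the continuation method. For the convergence claim, define $b(t) = \fint_M \partial_t\phi\,\omega^n$ and consider the oscillation $\mathrm{osc}_M(\partial_t\phi)(t) = \sup_M \partial_t\phi(\cdot,t) - \inf_M \partial_t\phi(\cdot,t)$. The monotonicity observed for $\sup$ and $\inf$ of $\partial_t \phi$ shows that $\mathrm{osc}_M(\partial_t\phi)(t)$ is non-increasing; a Harnack-type inequality applied to the linear parabolic equation satisfied by $u - \inf_{M\times[t,t+1]} u$ (uniformly parabolic by the $C^2$ bound) gives exponential decay $\mathrm{osc}_M(\partial_t\phi)(t) \le C e^{-\delta t}$. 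Hence $\partial_t\phi(\cdot, t) \to b$ uniformly for some constant $b$. Combined with uniform $C^{k,\alpha}$ bounds on $\tilde\phi$, a subsequence $\tilde\phi(\cdot, t_j) \to \varphi$ in $C^{k-1}$, and passing to the limit in~\eqref{I1k} gives $\sigma_k^{1/k}(\Lambda(X[\varphi] + \sqrt{-1}\partial\bar\partial\varphi \wedge \omega^{p-1})) = \psi + b$. Uniqueness of the limit (and hence full convergence, not merely along a subsequence) follows from a standard comparison/energy argument using concavity of $\sigma_k^{1/k}$.

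The main obstacle I expect is the interior second-order estimate: for $p > 1$ the eigenvalues $\Lambda$ live in $\mathbb{R}^N$ with $N = \binom{n}{p}$ and are not the eigenvalues of an honest Hermitian $(1,1)$ form, so the usual $\partial\bar\partial$ computations that give $C^2$ bounds in the $p = 1$ theory must be redone in the richer algebraic framework of Section~\ref{PR}; in particular, controlling commutator and torsion terms of the non-K\"ahler metric $\omega$ and extracting the concavity inequality against the correct test quantity requires genuinely new work tailored to $(p,p)$ forms. The $C^0$ estimate under the sharp cone condition $k \leq pN/n$ is the second place where the $(p,p)$ geometry enters nontrivially and must be proved by hand rather than by citation.
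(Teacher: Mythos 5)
There is a genuine gap: your outline reproduces the standard skeleton (short-time existence, a priori estimates, Evans--Krylov/Schauder, Harnack iteration for convergence), which is indeed the paper's route, but it misplaces the one point where the hypothesis $k \leq pN/n$ actually does work, and it leaves unproved exactly the ingredients that make the $(p,p)$ case go through. In the paper, $k\le pN/n$ has nothing to do with a $C^0$ barrier or a Song--Weinkove/Moser scheme; the $C^0$ (oscillation) control is a \emph{consequence} of the gradient estimate $|\nabla\phi|\le C(1+\sup\phi-\phi)$ of Section~\ref{G} (Corollary~\ref{cor1}), which in turn requires the growth hypotheses \eqref{G0.1}--\eqref{G0.5} on $X[\phi]$ and $\psi$. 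The condition $k\le pN/n$ enters through the rank condition \eqref{S0.2}: for $f=\sigma_k^{1/k}$ the tangent cone at infinity $\mathcal C_\sigma^+$ has rank $N-k+1$ (via the Lin--Trudinger inequality \cite{LT94}), and $N-k+1\ge N(n-p)/n+1$ is exactly $k\le pN/n$. That rank bound, through Lemma~\ref{lemma 3}, is combined with the key algebraic inequality of Lemma~\ref{lemma-P10}, namely $\sum G^{i\bar j}\xi_i\bar\xi_j\ge f_\alpha|\xi|^2$ with $\alpha=pN/n$ (proved by the unitary-submatrix Lemma~\ref{lemma-P30}), to produce the coercivity \eqref{S31} and \eqref{G12} that close both the second-order and the gradient estimates. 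Without these two lemmas your proposed test-function computation for $\log\Lambda_{\max}+h(|\nabla\phi|^2)+A\tilde\phi$ cannot be closed, because for $p>1$ the linearized operator $G^{i\bar j}$ is built from a restricted family of principal submatrices of $\{F^{I\bar J}\}$ and need not dominate a fixed multiple of $\sum f_I$ in every direction; this is precisely the degeneracy the rank condition is designed to overcome.

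Concretely, the steps you flag as ``must be redone / proved by hand'' --- the $(p,p)$ second-order estimate and the role of $k\le pN/n$ --- are the theorem's actual content: the paper proves Theorem~\ref{theorem-I1k} by verifying that $f=\sigma_k^{1/k}$ with $k\le pN/n$ and $X[\phi]$ as in \eqref{gg-I10ab} satisfy the hypotheses \eqref{S0.1}--\eqref{S0.2} and \eqref{G0.1a}--\eqref{G0.5a} of Theorem~\ref{theorem-I1}, whose proof rests on Lemmas~\ref{lemma-P20}, \ref{lemma-P10}, \ref{lemma-P30} and \ref{lemma 3} together with the parabolic Li--Yau Harnack inequality of Section~\ref{H}. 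Your convergence step (Harnack applied to $\partial_t\phi$ minus its infimum, plus the observation that the zeroth-order coefficient vanishes since $X,\psi$ do not depend on $\phi$) does match the paper's Section~\ref{H}; but as it stands the proposal is an outline whose crucial estimates are asserted rather than established, and the mechanism you propose for the sharp threshold $k\le pN/n$ (a $C^0$ barrier) is not the correct one and there is no indication such an argument exists.
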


\begin{remark}

For $p = n-1$, in particular, Theorem~\ref{theorem-I1k} holds for $1 \leq k \leq n-1$. 
\end{remark}

From both the PDE point of view and potential geometric applications it would be important to 
consider more general (nonlinear) dependence of $X [\phi]$ on $\partial \phi$ and $\bar \partial \phi$.
As in the general PDE theory, we need to impose growth rate conditions on the gradient dependence.
For simplicity we assume 
\[ X [\phi] = X (z, \partial \phi, \bar \partial \phi), \;\; 
   \psi [\phi] = \psi (z, \partial \phi, \bar \partial \phi), \]
which are independent of $\phi$, to satisfy
\begin{equation}
\label{G0.1a}
 \begin{aligned}
   & |D_{\zeta} X (z, \zeta,\bar {\zeta})| \leq \varrho_0 |\zeta| + \varrho_1, \;\;
     |D_{\zeta} \psi (z, \zeta,\bar {\zeta})| 
       \leq  \varrho_0 f (|\zeta|^2 \mathbf{1})/|\zeta| + \varrho_1
        \end{aligned}
\end{equation}
 and
\begin{equation}
\label{G0.5a}
    |\nabla_z X | \leq |\zeta| \left(\varrho_0 f (|\zeta|^2 \mathbf{1}) 
    + \varrho_1\right), \;\; 
    |\nabla_z \psi| \leq  |\zeta| \left(\varrho_0 |\zeta|^2+\varrho_1\right),  
\end{equation}
where 
 $\varrho_0 = \varrho_0 (z, |\zeta|) \to 0^+$ as $|\zeta| \to \infty$, and
 $\varrho_1 > 0$ is a constant; these conditions will only be used in deriving the gradient estimates.

In this paper we are primarily concerned with the long time existence and convergence of admissible 
solutions of the initial value problem~\eqref{I1}-\eqref{gg-I20}.

\begin{theorem}
\label{theorem-I1}
Let $\phi_0 \in C^{\infty} (M)$ be an admissible function with 
\begin{equation}
\label{S0.30}
   c_0 [\phi_0] := \lim_{t \to + \infty} f (t \mathbf{1}) - \sup_M \psi [\phi_0] 
   = \sup_{\Gamma} f - \sup_M \psi [\phi_0] > 0.
\end{equation}
In addition to \eqref{P2}-\eqref{P4}, 
\eqref{G0.1a} and \eqref{G0.5a},
assume
\begin{equation}
\label{S0.1}
    \sum f_i (\Lambda) \Lambda_i \geq - C_0 \sum f_i (\Lambda)
    \text{ in $\Gamma$}
\end{equation}
for some constant $C_0 > 0$,
and the following crucial condition
\begin{equation}
\label{S0.2}
    \text{rank of $\mathcal{C}_{\sigma}^+ \geq \frac{N(n-p)}{n}+1, \; \forall \; \inf\limits_{\Gamma}f\leq \sigma \leq \sup\limits_{\partial\Gamma}f$}
\end{equation}
where $\mathcal{C}_{\sigma}^+$ denotes the tangent cone at infinity of the level 
set $f = \sigma$.
There exists an admissible solution $\phi \in C^{\infty} (M \times \{t > 0\})$ 
of problem~\eqref{I1}-\eqref{gg-I20} for $\Omega = X [\phi]$.
Moreover, the function $\tilde{\phi}$ defined in
\eqref{I2} converges uniformly  to an admissible solution 
$\varphi \in C^{\infty} (M)$ of the elliptic equation
\begin{equation}
\label{I4}
    f (\Lambda (X [\varphi] + \sqrt{-1} \partial \bpartial \varphi \wedge \omega^{p-1})) 
  = \psi[\varphi] + b
\end{equation}
for some constant $b$ as $t \to \infty$.
\end{theorem}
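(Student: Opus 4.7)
The plan is to run the standard parabolic package: combine short-time existence with a priori $C^0$, $C^1$ and $C^2$ estimates to obtain long-time existence, upgrade regularity via Evans--Krylov and Schauder bootstrapping, and then analyze the oscillation of $\partial_t\phi$ to get long-time convergence. Since $\phi_0$ is admissible, equation \eqref{I1} is strictly parabolic near $\phi_0$, so standard parabolic Schauder theory produces a unique smooth admissible solution on $M\times[0,T^*)$ for some maximal $T^*\in(0,\infty]$.

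For the $C^0$ and $C^1$ bounds I would first differentiate \eqref{I1} in $t$ to see that $u:=\partial_t\phi$ satisfies a linear parabolic PDE. Together with \eqref{G0.1a}--\eqref{G0.5a} and assumption \eqref{S0.30}, the maximum principle yields uniform bounds on $u$ and shows that $\max_M u(\cdot,t)-\min_M u(\cdot,t)$ is monotone non-increasing. An ABP-type oscillation bound applied slice by slice to the ``elliptic'' equation $f(\Lambda(\Omega_\phi))=\psi[\phi]+u$ then produces a uniform bound for $\tilde\phi$. The gradient bound follows from the standard auxiliary function $W=e^{A\tilde\phi}|\nabla\phi|^2$: at an interior spatial maximum of $W$ the linearized parabolic operator applied to $W$ can be controlled using the subquadratic growth of $\nabla_z X$, $\nabla_z\psi$, $D_\zeta X$ and $D_\zeta\psi$ in \eqref{G0.1a}--\eqref{G0.5a} (where $\varrho_0\to 0$ at infinity), which permits absorption against a positive multiple of $\sum f_i$.

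The main obstacle is the second-order estimate $|\partial\bpartial\phi|\le C(1+|\nabla\phi|^2)$. Unlike in the case $p=1$, the eigenvalues $\Lambda$ of a real $(p,p)$ form live in $\bfR^N$ with $N>n$, and the Hermitian torsion combined with the wedge $\wedge\,\omega^{p-1}$ introduces many third-order error terms with the wrong sign. I would apply the linearized operator to the largest eigenvalue $\Lambda_1$ of $\Omega_\phi$, use concavity \eqref{P3} to absorb the favorable third-order term, and treat the residual torsion contributions via the tangent-cone-at-infinity method of Sz\'ekelyhidi, adapted as in \cite{STW17}. The counting is sharp: the rank condition \eqref{S0.2} supplies at least $N(n-p)/n+1$ directions at infinity, which is precisely what is needed to dominate the $N(n-p)/n$ bad terms produced by wedging with $\omega^{p-1}$, while \eqref{S0.1} handles the case where $\sum f_i$ is not uniformly bounded below. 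With this estimate in hand, \eqref{I1} becomes uniformly parabolic, Evans--Krylov together with \eqref{P3} yields interior $C^{2,\alpha}$ estimates, and Schauder bootstrapping gives uniform $C^\infty$ bounds on every finite time interval, so $T^*=+\infty$.

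For the convergence, the monotonicity of $\max_M u-\min_M u$ together with the uniform smooth bounds on $\tilde\phi$ allows extraction of a smooth subsequential limit $\varphi\in C^\infty(M)$ and a limiting constant $b=\lim_{t\to\infty}\min_M u(\cdot,t)$ such that $\varphi$ is an admissible solution of \eqref{I4}. Uniqueness of admissible solutions of \eqref{I4} modulo additive constants, which comes from the concavity of $f$ and the strong maximum principle for the linearized operator, then promotes the subsequential convergence to the full uniform convergence of $\tilde\phi$ as $t\to\infty$.
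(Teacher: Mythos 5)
Your final convergence step does not close. Knowing that $\operatorname{osc}_M\partial_t\phi(\cdot,t)$ is non-increasing and that $\tilde\phi$ is precompact in $C^\infty$ does not imply that a subsequential limit of $\tilde\phi$ solves \eqref{I4} with a \emph{constant} right-hand side: for that you need $\operatorname{osc}_M\partial_t\phi(\cdot,t)\to 0$, which monotonicity alone never gives. Moreover, the uniqueness-modulo-constants you invoke for \eqref{I4} is itself unproved here, since $X[\varphi]$ and $\psi[\varphi]$ depend on $\partial\varphi,\bar\partial\varphi$, so the usual concavity/comparison argument does not apply directly, and the constant $b$ is part of the unknown. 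The paper supplies the missing quantitative ingredient: it extends the Li--Yau Harnack inequality to linear parabolic equations with gradient terms on Hermitian manifolds (Lemma~\ref{lemma2.1}, Theorem~\ref{theorem-H}) and applies it to $\sup_M\phi_t(\cdot,k)-\phi_t$ and $\phi_t-\inf_M\phi_t(\cdot,k)$ to obtain $\operatorname{osc}_M\phi_t(\cdot,t)\le Ce^{-\beta t}$; this both forces the limit equation to have constant right side and upgrades subsequential to full uniform convergence of $\tilde\phi$. Without such a decay estimate (or an equivalent substitute) your argument stalls exactly at the point where the paper's Section~\ref{H} does the work.

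The a priori estimates as you sketch them also miss the paper's key mechanism and contain a circularity. The heart of the $p>1$ case is not a count of ``bad torsion terms versus directions at infinity''; it is the algebraic identity \eqref{gg-C2-124} and the resulting bound $G^{i\bj}\xi_i\bar\xi_j\ge f_{\alpha}|\xi|^2$ with $\alpha=pN/n$ (Lemmas~\ref{lemma-P20}--\ref{lemma-P30}), which, combined with the rank hypothesis \eqref{S0.2} and \eqref{S0.1} through Lemma~\ref{lemma 3}, makes the $(1,1)$-direction linearization $G^{i\bj}$ uniformly comparable to $\sum f_I$; this coercivity is what produces \eqref{S31} and \eqref{G12}, and nothing in your outline supplies it (applying the maximum principle to the largest eigenvalue $\Lambda_1$ of the $N\times N$ matrix rather than to $\fg_{1\bar1}$ does not remove this need). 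Finally, you propose a $C^0$ bound via a slice-wise ABP argument (unsubstantiated here: there is no subsolution hypothesis, the eigenvalue structure is $N$-dimensional, and $X[\phi]$ depends on the gradient) and then a gradient bound via $e^{A\tilde\phi}|\nabla\phi|^2$, which presupposes that $C^0$ bound --- a circular ordering. The paper avoids this by proving the gradient estimate in the self-improving form \eqref{G0}, $|\nabla\phi|\le C(1+\sup\phi-\phi)$ with $C$ depending only on $\sup|\phi_t|$ (controlled by Lemma~\ref{GG-E-lemma10}, since $X,\psi$ are taken independent of $\phi$), and then deducing the oscillation bound in Corollary~\ref{cor1}.
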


\begin{remark}
See Section~\ref{PR} for the definitions of $\mathcal{C}_{\sigma}^+$ and
 {\em rank} of $\mathcal{C}_{\sigma}^+$ introduced in \cite{Guan14, GN21}; see also \cite{GGQ21}.
\end{remark}

\begin{remark}
Geometrically, condition~\eqref{S0.1} means that the distance to the origin from the tangent plane of the level 
set $f = \sigma$ is bounded (by $\sqrt{n} C_0$). It is satisfied by most of the typical examples.

\end{remark}

\begin{remark}
The function $\tilde{\phi}$ clearly solves the equation
\begin{equation}
\label{I3}
\begin{aligned}
   \frac{\partial \tilde{\phi}}{\partial t} 
   = f (\Lambda (X [\tilde \phi] + \sqrt{-1} \partial \bpartial \tilde \phi \wedge \omega^{p-1}))    
  - \psi [{\tilde \phi}] - \frac{\int_M \frac{\partial \phi}{\partial t} \omega^n}{\int_M \omega^n}.
\end{aligned}
\end{equation}
\end{remark}

\begin{remark}
When $X [\phi] = \chi [\phi] \wedge \omega^{p-1}$ where $\chi [\phi]$ is a real
$(1,1)$ form which may also depend on $\phi$ and its gradient, 
the eigenvalues of $X [\phi] + \sqrt{-1} \partial \bpartial \phi$
have a special form. More precisely, for $I = ({i_1}, \ldots, {i_p}) \in \fI$ (see Section~\ref{PR}), the corresponding eigenvalue is 
\[ \Lambda_I (X [\phi] + \sqrt{-1} \partial \bpartial \phi) 
    = \sum_{i \in I} \lambda_i = \lambda_{i_1} + \cdots + \lambda_{i_p} \]
where 
$\lambda_{1},  \ldots, \lambda_{n}$ are the eigenvalues of 
$\chi [\phi] + \sqrt{-1} \partial \bpartial \phi$ with respect to $\omega$. 
The elliptic counterpart of equation~\eqref{I1} was treated in \cite{GGQ21} in this case. 
It is closely related to the notion of $\bfG$-plurisubharmonicity introduced by R. Harvey and B. Lawson~\cite{HL11, HL12, HL13}, and for $p = n-1$ the form-type Monge-Amp\`ere equation studied by
Fu-Wang-Wu~\cite{FWW11, FWW15} and Sz\'ekelyhidi-Tosatti-Weinkove~\cite{STW17};
see also ~\cite{TW17, TW2019} for earlier results.
\end{remark}

Theorem~\ref{theorem-I1k} is a special case of Theorem~\ref{theorem-I1}.
For $f = \sigma_k^{\frac{1}{k}}$ and $\sigma > 0$, 
the rank of $\mathcal{C}_{\sigma}^+$ is $N-k+1$. This follows from an inequality of Lin-Trudinger~\cite{LT94}.
Consequently, Theorem~\ref{theorem-I1} applies to $f = \sigma_k^{\frac{1}{k}}$ for 
$k \leq p N/n$ but excludes in particular the case $f =  \sigma_N^{\frac{1}{N}}$.

Another interesting example is $f = \log \varrho_k$, $1 \leq k \leq N$ where 
\[ \varrho_k (\Lambda) := \prod_{1 \leq i_1 < \cdots < i_k \leq N}
(\Lambda_{i_1} + \cdots + \Lambda_{i_k})  \]
defined in the cone
\[ \mathcal{P}_k : = \{\Lambda \in \bfR^N:
      \Lambda_{i_1} + \cdots + \Lambda_{i_k} > 0,
         \; \forall \; 1 \leq i_1 < \cdots < i_k \leq N \}. \]
In particular,  $\rho_1 = \sigma_N$ and $\rho_N = \sigma_1$.
For $f = \log \varrho_k$ the rank of $\cC_{\sigma}^+$ is $k$, and Theorem~\ref{theorem-I1} applies for 
$k > (n-p) N /n$.


The organization of the paper is as follows. In Section~\ref{PR} we derive the expression
of equation~\eqref{I1} in local coordinates, and prove the parabolicity under
condition~\eqref{P2}, which is rather nontrivial. 
Sections \ref{S} and \ref{G} are devoted to the second order and gradient estimates respectively. 
In Section~\ref{H} we prove the uniform convergence in Theorem~\ref{theorem-I1}, which needs an extension of Li-Yau's Harnack inequality~\cite{LY86} to equations with lower order terms on Hermitian manifolds. The proof 
presented here is a slightly refined version of that first carried out in \cite{George};  see \cite{Cao1985} and \cite{Gill11} for related results.

\bigskip

\section{The equation in local coordinates and ellipticity} 
\label{PR}
\setcounter{equation}{0}
\medskip

For $1 \leq p \leq n$, denote
\[ \fI = \fI_p = \{(i_1, \ldots, i_p) \in \bfN^p: 1 \leq i_1 < \cdots < i_p \leq n\}  \]
and fix the order in $\fI$: 
\[ I = (i_1, \ldots, i_p) < J = (j_1, \ldots, j_p) \]
provided that $i_l < j_l$ for the first non-equal pair. So 
\[ \fI = \{I_1, \ldots, I_N: I_i < I_j \; \mbox{if $i < j$}\} \]
where 
\[N = \frac{n!}{p! (n-p)!}. \]

In local coordinates $z = (z_1, \ldots, z_n)$ we write
\[ \omega = \sqrt{-1} g_{i\bj} dz_i \wedge d\bz_j \]
and, for $I  = (i_1, \ldots, i_p) \in \fI$,
\[ dz_I = dz_{i_1} \wedge \cdots \wedge dz_{i_p}, \;\;
 d \bz_I = d \bz_{i_1} \wedge \cdots \wedge d \bz_{i_p}. \]
 It follows that 
 \[ \omega^p = p! (\sqrt{-1})^{p^2} \sum \omega_{I\bJ} dz_I \wedge d \bz_J \]
where 
 \[ \omega_{I\bJ} = \det (g_{i_k \bj_l}|_{i_k \in I,  j_l \in J}). \]
 
\begin{definition}
Let $\Omega$ be a $(p,p)$ form given in local coordinates
\[ \Omega =  p! (\sqrt{-1})^{p^2} \Omega_{I\bJ} dz_I \wedge d \bz_J. \]
The {\sl eigenvalues} of $\Omega$ with respect to $\omega$ are defined to be the 
eigenvalues of the matrix $\{\Omega_{I\bJ}\}$ with respect to $\{\omega_{I\bJ}\}$,
i.e. the roots of 
\[ \det (\Omega_{I\bJ} - \lambda \omega_{I\bJ}) = 0. \]
\end{definition}

It is straightforward to verify that this definition is independent of local coordinates.  
Moreover, $\Omega$ is a real form if and only if $\{\Omega_{I\bJ}\}$ is an $N \times N$ Hermitian matrix (see e.g. Demailly~\cite{Demailly}). 
Consequently, the eigenvalues of a real $(p, p)$ form are real numbers. 

We now consider the equation
\begin{equation}
\label{pde2}
    \begin{aligned}
        & \frac{\partial \phi}{\partial t} = f (\Lambda(X [\phi]  + (\sqrt{-1} \partial \bar\partial \phi + \chi) \wedge \omega^{p-1})) - \psi [\phi],  \\
    \end{aligned}
\end{equation}
which is clearly an equivalent form of equation~\eqref{I1},
where $\chi$ is a smooth real $(1, 1)$ form, 
$X [\phi] = X (z, \phi, \partial\phi, \bpartial \phi)$ which is a real 
$(p,p)$ form for given $\phi$, while similarly 
$\psi [\phi] = \psi (z, \phi, \partial\phi, \bpartial \phi)$;
both $X$ and $\psi$ are assumed to be smooth in their variables.

For convenience we shall write
$\fg = \sqrt{-1} \partial \bar\partial \phi + \chi$ and 
\[ Z [\phi] := X[ \phi] + 
\fg \wedge \omega^{p-1}. \]
Define the functions $F$, $G$ by 
\[ F (Z) = G (\fg, X) = f (\Lambda (Z)).  \]
In local coordinates we write 
\[ \fg = \sqrt{-1} \fg_{i\bj} dz_i \wedge d \bz_j, \]
\[ X [\phi] = p! (\sqrt{-1})^{p^2} X_{I\bJ} dz_I \wedge d \bz_J, \]
\[ Z [\phi] = p! (\sqrt{-1})^{p^2} Z_{I\bJ} dz_I \wedge d \bz_J, \]
and 
\[ F^{I\bJ} = \frac{\partial F}{\partial Z_{I\bJ}} (Z), \;\; 
 G^{i\bj} = \frac{\partial G}{\partial \fg_{i\bj}} (\fg).  \]
It is well known from \cite{CNS3} that the matrix $\{F^{I\bJ}\}$ is positive semi-definite if
$\Lambda (Z) \in \Gamma$ under assumption \eqref{P2}.

\begin{lemma}
\label{lemma-P20}
Under condition \eqref{P2}, $\{G^{i\bj}\}$ is positive semi-definite 
for $\Lambda (Z) \in \Gamma$. Consequently, equation~\eqref{pde2} is degenerate parabolic for admissible solutions.
\end{lemma}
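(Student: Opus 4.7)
The plan is to reduce positive semi-definiteness of $\{G^{i\bj}\}$ to that of $\{F^{I\bJ}\}$---the latter being known from \cite{CNS3} under \eqref{P2}---by carefully exploiting the linear map $\cL: \fg \mapsto \fg \wedge \omega^{p-1}$ that appears in $Z = X + \cL(\fg)$. Differentiating $G(\fg, X) = F(Z)$ via the chain rule gives
\[ G^{i\bj} = \sum_{I, J \in \fI} F^{I\bJ}\, \frac{\partial Z_{I\bJ}}{\partial \fg_{i\bj}}, \]
and for any $\xi \in \bfC^n$, introducing the rank-one $(1,1)$-form $H := \sqrt{-1}\,\xi_i\bar\xi_j\, dz_i \wedge d\bz_j$ lets one rewrite the associated Hermitian form as
\[ \sum_{i,j} G^{i\bj}\xi_i\bar\xi_j = \sum_{I,J} F^{I\bJ}\,(H \wedge \omega^{p-1})_{I\bJ}. \]

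The main step is to show this last expression is non-negative at every point. Fix $p_0 \in M$ and choose holomorphic coordinates with $g_{i\bj}(p_0) = \delta_{ij}$; then compose with a further unitary change of coordinates to align $\xi$ with the first axis, so at $p_0$ one has $H = \sqrt{-1}\,dz_1 \wedge d\bz_1$. Such a unitary change induces a unitary action on $\wedge^p \bfC^n$, under which $\{F^{I\bJ}\}$ transforms as a Hermitian matrix and retains its positive semi-definiteness. In these coordinates both $\omega$ and $H$ are simultaneously diagonal at $p_0$, so the identity
\[ H \wedge \omega^{p-1} \;=\; \frac{1}{p}\,\frac{d}{dt}\bigg|_{t=0}\,(\omega + tH)^p \]
together with the diagonal expansion of $(\omega + tH)^p$ at $p_0$ produces
\[ (H \wedge \omega^{p-1})_{I\bJ}(p_0) \;=\; \frac{\delta_{IJ}}{p} \;\text{when } 1 \in I, \text{ and } 0 \text{ otherwise.} \]

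Substituting back,
\[ \sum_{i,j}G^{i\bj}\xi_i\bar\xi_j \;=\; \frac{1}{p}\sum_{I \in \fI,\, 1 \in I} F^{I\bI} \;\geq\; 0, \]
since every diagonal entry of a positive semi-definite Hermitian matrix is non-negative. As $\xi$ was arbitrary, $\{G^{i\bj}\}$ is positive semi-definite. The parabolicity statement is then immediate: the principal symbol of the spatial linearization of \eqref{pde2} at any admissible solution is precisely $\{G^{i\bj}\}$, which we have just shown to be positive semi-definite.

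The main technical obstacle is verifying the diagonal formula for $(H \wedge \omega^{p-1})_{I\bJ}(p_0)$; the normalization $\omega^p = p!(\sqrt{-1})^{p^2}\sum \omega_{I\bJ}\,dz_I \wedge d\bz_J$ has to be tracked carefully, but the $(\omega+tH)^p$ trick reduces everything to expanding a product of diagonal factors and differentiating in $t$. Conceptually, the $p>1$ case is genuinely more delicate than $p=1$: there $G^{i\bj} = F^{i\bj}$ directly, whereas for $p>1$ one must transmit positivity of $\{F^{I\bJ}\}$ back to the $(1,1)$-level through $\cL$, and the argument hinges on the fact that $\cL$ sends the single diagonal entry $H = \sqrt{-1}\,dz_1 \wedge d\bz_1$ to a \emph{diagonal} $(p,p)$-matrix—which in turn hinges on the simultaneous diagonalization of $\omega$ and $H$ at $p_0$.
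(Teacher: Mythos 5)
Your proof is correct, but it takes a genuinely different route from the paper's. The paper computes $Z_{I\bJ}$ explicitly in coordinates where $g_{i\bj}=\delta_{ij}$ (its \eqref{gg-C2-115}), reads off $G^{i\bj}$ in \eqref{gg-C2-120}, and then establishes the key structural identity \eqref{gg-C2-124}, writing $G^{i\bj}$ as a signed sum over $(p-1)$-multi-indices $I'$ of entries of the principal submatrices $\{F^{I'_i\bar{I'}_j}\}$; positivity of the quadratic form then comes from summing the quadratic forms of these positive semi-definite submatrices evaluated at signed copies of $\xi$. You instead exploit unitary invariance: after an extra rotation aligning $\xi$ with a coordinate axis, the rank-one form $H$ and $\omega$ are simultaneously diagonal, $H\wedge\omega^{p-1}$ becomes a diagonal $(p,p)$-matrix supported on $\{I: 1\in I\}$, and the quadratic form reduces to a sum of diagonal entries of the (unitarily conjugated, hence still positive semi-definite) matrix $\{F^{I\bJ}\}$. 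This avoids the sign bookkeeping entirely and is arguably cleaner for the lemma as stated; the trade-off is that the paper's decomposition \eqref{gg-C2-124} is not merely a vehicle for semi-definiteness but is reused crucially later, in the refined lower bound of Lemma~\ref{lemma-P10} and in identities such as \eqref{gg-C2-125'} in the second order estimates, so your shortcut would not replace it in the overall scheme. Two minor points: your $1/p$ in $(H\wedge\omega^{p-1})_{I\bJ}$ is a harmless normalization constant (indeed consistent with the paper's stated convention $Z[\phi]=p!(\sqrt{-1})^{p^2}Z_{I\bJ}\,dz_I\wedge d\bz_J$, whereas \eqref{gg-C2-115} drops it), and a positive factor is irrelevant for semi-definiteness; also your final formula should carry the factor $|\xi|^2$ unless you normalize $|\xi|=1$.
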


\begin{proof}
At a point on $M$ where we assume $g_{i\bj} = \delta_{ij}$, by straightforward calculations,  
\begin{equation}
\label{gg-C2-115}
Z_{I\bJ} =  \left\{ \begin{aligned}
   &  
    X_{I\bI} + \sum_{i \in I} \fg_{i\bi},  
          \;\; I = J, \\
  &   
       X_{I\bJ} +  (-1)^{\alpha^{ij}_{IJ}} 
          {\fg_{i\bj}},  \;\; |I \cap J| = p-1, \; i \in I \setminus J, \; j \in J \setminus I, \\
   & X_{I\bJ},  \;\; |I \cap J| \leq p-2
    \end{aligned} \right. 
\end{equation}    
where $\alpha^{ij}_{IJ} =(i|I)+(j|J)$ and $(i|I)$ denotes the position of $i$ in $I$ so
$(i_k|I) = k$ for $I = (i_1, \ldots, i_p)$. 
It follows that 
\begin{equation}
\label{gg-C2-120}
G^{i\bj} =
\left\{  \begin{aligned}
  & \sum_{I: i \in I} F^{I\bI},  \;\; i = j,\\ 
& \sum_{|I \cap J|= p-1; i \in I \setminus J, j \in J \setminus I} (-1)^{\alpha^{ij}_{IJ}} F^{I\bJ}, 
             \; i \neq j. 
\end{aligned} \right.
\end{equation}
 We need to rewrite it in a different expression. 
 Let 
 $$\fI' = \{(i_1, \ldots, i_{p-1}): 1 \leq i_1 < \cdots < i_{p-1} \leq n\}$$
 and for $I' = \{i_1, \ldots, i_{p-1}\} \in \fI'$, $i \notin I'$, $i_{k-1} < i <i_{k}$, 
 write 
 $$I'_i = \{i_1, \ldots, i_{k-1}, i, i_{k}, \ldots, i_{p-1}\}. $$
 A key observation is the following identity,
 \begin{equation}
 \label{gg-C2-124}
G^{i\bj} = \sum_{I' \in \fI': i, j \notin I'} (-1)^{\alpha^{ij}_{I'_i I'_j}} F^{I'_i\bar{I'}_j}. 
 \end{equation}
For each $I'$ the principal submatrix $\{F^{I'_i\bar{I'}_j}\}$ is positive semidefinite as so is 
$\{F^{I\bar J}\}$. It follows that  $\{G^{i\bj}\}$ is a Hermitian matrix, and for any $ \xi = (\xi_1, \ldots, \xi_n) \in \bfC^n$
 \begin{equation}
 \label{gg-C2-125}
 \begin{aligned}
\sum G^{i\bj} \xi_i \bar \xi_j
    = \,& \sum_{i,j} \sum_{I' \in \fI': i, j \notin I'} (-1)^{\alpha^{ij}_{I'_i I'_j}} F^{I'_i\bar{I'}_j} \xi_i \bar \xi_j \\
    = \,& \sum_{I' \in \fI'} \sum_{i, j \notin I'} F^{I'_i \bar{I'}_j}  [(-1)^{(i | I'_i )} \xi_i]  [(-1)^{(j | I'_j)} \bar \xi_j] 
    \geq 0.
    \end{aligned}
 \end{equation}
This completes the proof.
\end{proof}

It should be noted that the submatrices of form $\{F^{I'_i \bI'_j}\}$ do not include all principal 
$(n-p+1) \times (n-p+1)$ submatrices 
of $\{F^{I\bJ}\}$. 

The following refined inequality is crucial to the second order and gradient estimates derived in 
Sections \ref{S} and \ref{G}.
 
\begin{lemma}
\label{lemma-P10}
In local coordinates, at a point $z_0$ where $g_{i\bj} = \delta_{ij}$, 
assume $f_I \leq f_J$ for $1 \leq I < J \leq N$. Denote 
$\alpha = pN/n$. 
Then 
 \begin{equation}
 \label{gg-C2-126}
\sum G^{i\bj} \xi_i \bar \xi_j
    \geq f_{\alpha} |\xi|^2,
    \;\; \forall \; \xi = (\xi_1, \ldots, \xi_n) \in \bfC^n.
 \end{equation}
\end{lemma}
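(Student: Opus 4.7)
\medskip
\noindent\textbf{Proof plan.} The strategy is to reinterpret the sum-of-squares identity~\eqref{gg-C2-125} geometrically, pass to the eigenbasis of $\{F^{I\bJ}\}$, and then close with a pigeonhole-style argument on the distribution of mass across eigenspaces.

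The first step is to recognize that the vector appearing inside~\eqref{gg-C2-125}, namely $\eta_{I'}\in\bfC^N$ with components $(-1)^{(i|I'_i)}\xi_i$ at positions $I'_i$, is---after identifying $\bfC^N$ with $\Lambda^p\bfC^n$---precisely (up to an overall sign) the wedge product $\xi^\flat\wedge dz_{I'}$, where $\xi^\flat=\sum_i\xi_i\,dz_i$. Hence \eqref{gg-C2-125} becomes
\[
 \sum G^{i\bj}\xi_i\bar\xi_j \;=\; \sum_{I'\in\fI'}\bigl\langle F\,(\xi^\flat\wedge dz_{I'}),\ \xi^\flat\wedge dz_{I'}\bigr\rangle,
\]
where $F$ is the Hermitian positive semidefinite operator on $\Lambda^p\bfC^n$ represented by $\{F^{I\bJ}\}$.

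Diagonalize $F$ in an orthonormal eigenbasis $\{w_K\}_{K=1}^N$ of $\Lambda^p\bfC^n$ with eigenvalues $f_1\le\cdots\le f_N$, the ordering provided by the hypothesis. Using the wedge/contraction adjunction $\langle w_K,\xi^\flat\wedge dz_{I'}\rangle=\langle\iota_\xi w_K,dz_{I'}\rangle$ together with the Parseval identity in the orthonormal basis $\{dz_{I'}\}_{I'\in\fI'}$ of $\Lambda^{p-1}\bfC^n$, the quadratic form collapses to
\[
 \sum G^{i\bj}\xi_i\bar\xi_j \;=\; \sum_{K=1}^N f_K\,c_K,\qquad c_K\;:=\;|\iota_\xi w_K|^2.
\]
Two elementary facts then suffice. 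First, the uniform cap $c_K\le|\xi|^2$ holds for each $K$, since the contraction $\iota_\xi$ has operator norm $|\xi|$ and $|w_K|=1$. Second, the total mass is
\[
 \sum_{K=1}^N c_K \;=\; \sum_{I'\in\fI'}|\xi^\flat\wedge dz_{I'}|^2 \;=\; \binom{n-1}{p-1}|\xi|^2 \;=\; \alpha|\xi|^2,
\]
since $\xi^\flat\wedge dz_{I'}$ has entries $\pm\xi_i$ at $I=I'_i$ for each $i\notin I'$ and each index $i$ appears in exactly $\binom{n-1}{p-1}$ of the $I'$'s. These together force $\sum_{K<\alpha}c_K\le(\alpha-1)|\xi|^2$, hence $\sum_{K\ge\alpha}c_K\ge|\xi|^2$, so
\[
 \sum_K f_K c_K \;\ge\; f_\alpha\sum_{K\ge\alpha}c_K \;\ge\; f_\alpha|\xi|^2,
\]
which is the asserted inequality.

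The main obstacle in this plan is the very first step. Without the wedge/contraction reinterpretation, \eqref{gg-C2-125} only transparently yields $G\ge 0$, and a blind Cauchy-interlacing estimate on each principal submatrix $\{F^{I'_i\bar{I'}_j}\}$ of $\{F^{I\bJ}\}$ delivers merely the trivial $G\ge f_1\,I$. The gain from $f_1$ to $f_\alpha$ comes precisely from the pigeonhole mechanism: a total mass $\alpha|\xi|^2$ distributed over $N$ eigenspaces with per-eigenspace cap $|\xi|^2$ cannot fit inside the lowest $\alpha-1$ eigenspaces, so at least $|\xi|^2$ of it must land where $f_K\ge f_\alpha$.
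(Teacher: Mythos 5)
Your argument is correct, and in substance it is the same proof as the paper's, repackaged more invariantly. The paper first reduces to $\xi = e_i$ by taking $G^{i\bj}$ diagonal at $z_0$, expands each diagonal entry through the eigendecomposition $F^{I\bJ}=\sum_K f_K P^{I\bK}\bar P^{J\bK}$, discards the terms with $K<\alpha$ (which uses $f_K\ge 0$, i.e.\ \eqref{P2}), and then invokes Lemma~\ref{lemma-P30}: the squared moduli of the entries of the $\alpha\times(N-\alpha+1)$ block of the unitary matrix $P$ with rows $\{I'_i:\, i\notin I'\}$ and columns $K\ge\alpha$ sum to at least $1$. Your wedge/contraction reformulation instead computes the full quadratic form at once, $\sum G^{i\bj}\xi_i\bar\xi_j=\sum_K f_K\,c_K$ with $c_K=|\iota_\xi w_K|^2$, and your two facts --- the cap $c_K\le|\xi|^2$ and the total mass $\sum_K c_K=\binom{n-1}{p-1}|\xi|^2=\alpha|\xi|^2$ --- are exactly the row/column norm counts on which the proof of Lemma~\ref{lemma-P30} rests; indeed for $\xi=e_i$ your $c_K$ coincides with the paper's quantity $\sum_{I':\, i\notin I'}|P^{I'_i\bK}|^2$, and your pigeonhole step is the same contradiction argument run forwards. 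What your version buys is that no reduction to diagonal $G$ and no separate unitary-submatrix lemma are needed, all $\xi$ are handled simultaneously, and the provenance of the integer $\alpha=pN/n=\binom{n-1}{p-1}$ becomes transparent. One point worth stating explicitly in a final write-up: the inequality $\sum_K f_Kc_K\ge f_\alpha\sum_{K\ge\alpha}c_K$ throws away the terms with $K<\alpha$ and therefore uses $f_K\ge 0$ from \eqref{P2}, exactly as the paper does implicitly in \eqref{gg-C2-127}.
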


\begin{proof}
Clearly we may assume $G^{i\bj}$ to be diagonal at $z_0$.
Let $P$ be a unitary matrix diagonalizing $\{F^{I\bJ}\}$
\[ \bar P^T \{F^{I\bJ}\} P = [f_{1}, \ldots, f_N], \]
that is, for $P = \{P^{I\bJ}\}$,
\[  \begin{aligned} 
    F^{I\bJ} 
      = \sum_{K \leq N} f_{K} P^{I\bK} \bar P^{J\bK}.   
\end{aligned} \] 
 By \eqref{gg-C2-124},
 \begin{equation}
 \label{gg-C2-127}
  \begin{aligned}
 \sum G^{i\bi} |\xi_i|^2
    = \,& \sum_{i} \sum_{I' \in \fI': i \notin I'} F^{I'_i\bI'_i} |\xi_i|^2 \\
      = \,& \sum_{i} \sum_{I' \in \fI': i \notin I'} 
              \sum_{K \leq N} f_{K} P^{I'_i\bK} \bar P^{I'_i\bK} |\xi_i|^2 \\
 \geq \,& 
       f_{\alpha} \sum_{i}  |\xi_i|^2 \sum_{I' \in \fI': i \notin I'} 
               \sum_{K \geq {\alpha}}  |P^{I'_i\bK}|^2, 
                 \;\; \forall \; \xi \in \bfC^n.
     \end{aligned}
 \end{equation}
Now we derive \eqref{gg-C2-126} by applying Lemma~\ref{lemma-P30} below for each $1 \leq i \leq n$.
\end{proof}


\begin{lemma}
\label{lemma-P30}
Let $B = \{b^{i\bj}\}$ be an $N \times N$ unitary matrix and $A$ an $\alpha \times \beta$ submatrix 
of $B$ with $\alpha + \beta \geq N +1$. Then
 \begin{equation}
 \label{gg-PR-110}
\sum |b^{i\bj}|^2 \geq 1
 \end{equation}
 where the sum is taken over all entries of $A$. 
\end{lemma}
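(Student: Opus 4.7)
\medskip

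The plan is to exploit the unit norm of every row and every column of a unitary matrix, together with a simple complementation argument. Write $B=\{b^{i\bar j}\}$ and assume, without loss of generality, that $A$ corresponds to the rectangle of row indices $R\subset\{1,\ldots,N\}$ with $|R|=\alpha$ and column indices $C\subset\{1,\ldots,N\}$ with $|C|=\beta$. The goal is to bound
\[ S \;:=\; \sum_{i\in R,\,j\in C} |b^{i\bar j}|^2 \;\geq\; 1. \]

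First I would use that each row of the unitary matrix $B$ has unit Euclidean norm: $\sum_{j=1}^N |b^{i\bar j}|^2 = 1$ for every $i$. Summing this over $i\in R$ and splitting the $j$-sum over $C$ and its complement $C^c$ gives
\[ \alpha \;=\; \sum_{i\in R}\sum_{j=1}^N |b^{i\bar j}|^2 \;=\; S \,+\, \sum_{i\in R,\,j\notin C} |b^{i\bar j}|^2. \]
Next I would bound the remainder term using column norms: since $\sum_{i=1}^N |b^{i\bar j}|^2=1$ for each $j$, dropping the rows $i\notin R$ only decreases the sum, so
\[ \sum_{i\in R,\,j\notin C} |b^{i\bar j}|^2 \;\leq\; \sum_{j\notin C}\sum_{i=1}^N |b^{i\bar j}|^2 \;=\; |C^c| \;=\; N-\beta. \]

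Combining the identity and the inequality yields $S \geq \alpha-(N-\beta)=\alpha+\beta-N\geq 1$ by the hypothesis $\alpha+\beta\geq N+1$, which is exactly \eqref{gg-PR-110}.

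There is really no main obstacle here: the statement is a purely linear-algebraic fact about unitary matrices and the proof is essentially a one-line row/column norm accounting. The only thing to watch is that the inequality $\sum_{i\in R,j\notin C}|b^{i\bar j}|^2\leq N-\beta$ is sharp enough, which it is because the bound used is simply the total norm of the $N-\beta$ columns outside $C$. The hypothesis $\alpha+\beta\geq N+1$ is used only in the very last step to ensure $\alpha+\beta-N\geq 1$.
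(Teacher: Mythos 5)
Your proof is correct and is essentially the same row/column norm accounting as the paper's: the paper reduces to $\beta=N-\alpha+1$ and phrases it as a contradiction ($\alpha-1\geq\alpha-\varepsilon$), while you run the identical bookkeeping directly to get $S\geq\alpha+\beta-N\geq 1$. No substantive difference.
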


\begin{proof}
This is an elementary result in linear algebra and can be seen easily as follows. 
It is sufficient to assume $\beta = N - \alpha + 1$.  
Without loss of generality assume 
$A = \{b^{i\bj}\}$ for $1 \leq i \leq \alpha$, $\alpha \leq j \leq N$, and 
\[ \sum_{1 \leq i \leq \alpha, \alpha \leq j \leq n} |b^{i\bj}|^2 = \varepsilon < 1. \]
As $B$ is an unitary matrix, 
\[  \alpha - 1 = \sum_{j = 1}^{\alpha - 1} \sum_{i = 1}^{n} |b^{i\bar j}|^2 
                \geq \sum_{i = 1}^{\alpha} \sum_{j = 1}^{\alpha -1} |b^{i\bar j}|^2   
                     = \alpha - \varepsilon \]
which is a contradiction.
\end{proof}

\begin{remark}
Lemma~\ref{lemma-P30} clearly holds as long as the rows and columns of $B$ consist of
unit vectors. 
\end{remark}

We now briefly recall some definitions and results from \cite{GGQ21}, \cite{Guan14} and \cite{GN21} which are key ingredients in the {\em a priori} estimates and are related to the rank condition~\eqref{S0.2}. 
For 
$\sigma \in (\sup_{\partial \Gamma} f, \sup_{\Gamma} f)$ define
\[ \Gamma^{\sigma} = \{\lambda \in \Gamma: f (\lambda) > \sigma\}. \]

\begin{lemma}[\cite{GGQ21}]
\label{lemma1}
 Under conditions \eqref{P2} and \eqref{P3}, the level hypersurface of $f$
 \[ \partial \Gamma^{\sigma} = \{\lambda \in \Gamma: f (\lambda) = \sigma\}, \]
which is the boundary of $\Gamma^{\sigma}$, is smooth and convex. 
\end{lemma}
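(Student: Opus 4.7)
The strategy is standard once we assemble the three ingredients implicit in the hypotheses: (i) $\Gamma^\sigma$ is an open convex subset of $\Gamma$, (ii) $\partial\Gamma^\sigma$ lies in the interior of $\Gamma$, and (iii) the gradient $\nabla f$ is nowhere zero on $\partial\Gamma^\sigma$, so the implicit function theorem applies.

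First I would establish convexity of $\Gamma^\sigma$. Since $f$ is concave on the convex cone $\Gamma$ by \eqref{P3}, for any $\lambda_1,\lambda_2\in\Gamma^\sigma$ and $t\in[0,1]$ one has
\[
f(t\lambda_1+(1-t)\lambda_2)\;\geq\; t f(\lambda_1)+(1-t) f(\lambda_2)\;>\;\sigma,
\]
so $\Gamma^\sigma$ is convex; openness is immediate from continuity of $f$. Next, because $\sigma>\sup_{\partial\Gamma} f$, every $\lambda\in\partial\Gamma^\sigma$ satisfies $f(\lambda)=\sigma>\sup_{\partial\Gamma}f$, hence $\partial\Gamma^\sigma$ is disjoint from $\partial\Gamma$ and lies in the interior of $\Gamma$ where $f$ is smooth.

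The key step is to show that $\nabla f(\lambda_0)\neq 0$ for every $\lambda_0\in\partial\Gamma^\sigma$. Suppose instead that $\nabla f(\lambda_0)=0$. Concavity of $f$ gives the supporting-plane inequality
\[
f(\lambda)\;\leq\; f(\lambda_0)+\langle\nabla f(\lambda_0),\lambda-\lambda_0\rangle\;=\;\sigma
\]
for all $\lambda\in\Gamma$, so $\sup_\Gamma f\leq\sigma$, contradicting $\sigma<\sup_\Gamma f$. Therefore $\nabla f\neq 0$ on $\partial\Gamma^\sigma$, and the implicit function theorem yields that $\partial\Gamma^\sigma$ is a smooth hypersurface. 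Convexity of $\partial\Gamma^\sigma$ as a hypersurface then follows from the fact that it is the smooth boundary of the convex open set $\Gamma^\sigma$: the outward unit normal is $-\nabla f/|\nabla f|$ (because $\nabla f$ points into $\Gamma^\sigma$ by monotonicity \eqref{P2} together with non-vanishing), and the second fundamental form with respect to this normal is nonnegative since the set $\{f\geq\sigma\}$ is convex.

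The only point requiring any care is the non-vanishing of $\nabla f$ on $\partial\Gamma^\sigma$; this is where one uses the full strength of concavity together with the strict inequality $\sigma<\sup_\Gamma f$, and without these the hypersurface could fail to be smooth at a critical level. Everything else reduces to standard facts about super-level sets of concave functions, so no genuine obstacle arises.
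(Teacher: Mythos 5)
Your proposal is correct, and it supplies exactly the ingredient the paper itself glosses over: the paper does not prove this lemma but cites \cite{GGQ21}, adding only the remark that the statement is ``clearly true'' when the inequality in \eqref{P2} is strict and that it persists under the weak inequality $f_i\geq 0$. Your argument is the right way to handle that weaker hypothesis: instead of deducing $\nabla f\neq 0$ from strict monotonicity, you get it from concavity via the supporting-plane inequality together with $\sigma<\sup_{\Gamma}f$ (which is part of the standing assumption $\sigma\in(\sup_{\partial\Gamma}f,\sup_{\Gamma}f)$ made just before the lemma), and then smoothness follows from the implicit function theorem while convexity of $\partial\Gamma^{\sigma}$ is inherited from convexity of the super-level set of the concave $f$. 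Two small remarks: the fact that $\nabla f$ points into $\Gamma^{\sigma}$ needs only the non-vanishing of the gradient, not \eqref{P2}; and the role of $\sigma>\sup_{\partial\Gamma}f$ is precisely to guarantee that the closure of $\Gamma^{\sigma}$ does not meet $\partial\Gamma$, so that $\{\lambda\in\Gamma: f(\lambda)=\sigma\}$ really is the whole boundary of $\Gamma^{\sigma}$ --- you gesture at this and it is easily made precise by a limsup argument at $\partial\Gamma$.
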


This is clearly true with strict inequality in \eqref{P2}, but still remains valid under the slightly weaker hypothesis.

For $\lambda \in \partial \Gamma^{\sigma}$ define
\[ \nu_{\lambda} = \frac{Df (\lambda)}{|Df (\lambda)|} \]
which is the unit normal vector to $\partial \Gamma^{\sigma}$ at $\lambda$. 

\begin{definition}[\cite{Guan14}] 
For $\mu \in \bfR^n$
let 
 \[ S^{\sigma}_{\mu} = \{\lambda \in \partial
\Gamma^{\sigma}: \nu_{\lambda} \cdot (\mu - \lambda) \leq 0\}. \]
The  {\em tangent cone at infinity} 
to $\Gamma^{\sigma}$ is defined as
\[ \begin{aligned}
\cC^+_{\sigma}
 \,& = \{\mu \in \bfR^n:
              S^{\sigma}_{\mu} \; \mbox{is compact}\}.
    \end{aligned} \]
\end{definition}

Clearly $\cC^+_{\sigma}$ is a symmetric convex cone.

\begin{theorem}[\cite{Guan14}]
\label{Guan14-thm10}
{\bf a)} $\cC^+_{\sigma}$ is open. 
{\bf b)} For any compact subset $K$ of $\cC^+_{\sigma}$
there exist constants $\epsilon, \; R > 0$ such that 
\begin{equation}
\label{Guna14-10}
    \sum f_{\lambda_i} (\mu_i - \lambda_i) \geq \epsilon \sum f_{\lambda_i} + \epsilon,
    \;\; \forall \; \mu \in K, \; \lambda \in \partial \Gamma^{\sigma}, \; |\lambda| \geq R. 
\end{equation}
\end{theorem}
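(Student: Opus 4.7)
The strategy is to prove (b) first; then (a) follows since the inequality in (b) applied with $K=\{\mu\}$ forces $\nu_\lambda\cdot(\mu-\lambda)\ge\epsilon>0$ for $|\lambda|\ge R$, so that $S^\sigma_\mu\subset B_R$ is compact, and that bound persists for $\mu'$ in a small neighborhood of $\mu$. Geometrically (b) reformulates as follows: since $Df(\lambda)=|Df(\lambda)|\nu_\lambda$ and $|Df|\le \sum f_{\lambda_i}\le \sqrt{n}\,|Df|$, the target reduces, up to multiplicative constants in $\epsilon$, to a uniform lower bound
\[
\nu_\lambda\cdot(\mu-\lambda)\ge \epsilon_0 \quad \text{for all } \mu\in K,\ \lambda\in\partial\Gamma^\sigma,\ |\lambda|\ge R,
\]
plus an additive positive correction that I handle last.

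The first input is a support bound. By Lemma~\ref{lemma1}, $\Gamma^\sigma$ is convex with $\nu_\lambda$ the inward unit normal on $\partial\Gamma^\sigma$; fixing any $\lambda_\ast\in\partial\Gamma^\sigma$ gives $\nu_\lambda\cdot(\lambda_\ast-\lambda)\ge 0$, hence $\nu_\lambda\cdot\lambda\le|\lambda_\ast|$ uniformly on $\partial\Gamma^\sigma$. I prove the uniform positivity above by contradiction: extract $\mu_k\in K$ and $\lambda_k\in\partial\Gamma^\sigma$ with $|\lambda_k|\to\infty$ and $\nu_{\lambda_k}\cdot(\mu_k-\lambda_k)\to 0^+$, pass to subsequences $\mu_k\to\mu^\ast\in K$ and $\nu_{\lambda_k}\to\nu$ on the unit sphere, and set $\mu':=\mu^\ast-\delta\nu$ for small $\delta>0$. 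Then $\nu_{\lambda_k}\cdot(\mu'-\lambda_k)\to-\delta<0$, placing $\lambda_k\in S^\sigma_{\mu'}$ eventually and forcing $\mu'\notin\cC^+_\sigma$. Since $\delta>0$ is arbitrary, $\mu^\ast$ lies on the boundary of $\cC^+_\sigma$, contradicting $\mu^\ast\in\cC^+_\sigma$ provided openness of $\cC^+_\sigma$ (i.e.\ (a)) is already known.

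This apparent circularity is the main obstacle, and it forces me to establish (a) directly. Given $\mu_0\in\cC^+_\sigma$, if $\mu_k\to\mu_0$ with $\mu_k\notin\cC^+_\sigma$, then there exist $\lambda_k\in\partial\Gamma^\sigma$ with $|\lambda_k|\to\infty$ and $\nu_{\lambda_k}\cdot(\mu_k-\lambda_k)\le 0$. The support bound controls $\nu_{\lambda_k}\cdot\mu_k$ and gives $\nu_{\lambda_k}\cdot(\mu_0-\lambda_k)\le|\mu_0-\mu_k|\to 0$. I expect to promote this asymptotic non-strict inequality to a genuine sequence $\tilde\lambda_k\in S^\sigma_{\mu_0}$ with $|\tilde\lambda_k|\to\infty$---contradicting $\mu_0\in\cC^+_\sigma$---via a small tangential slide $\tilde\lambda_k=\lambda_k+s_k v_k$ with $v_k\in T_{\lambda_k}\partial\Gamma^\sigma$ and $s_k\to 0$: smoothness of $\partial\Gamma^\sigma$ (Lemma~\ref{lemma1}) then ensures $\nu_{\tilde\lambda_k}\cdot(\mu_0-\tilde\lambda_k)=\nu_{\lambda_k}\cdot(\mu_0-\lambda_k)+O(s_k)$, while the first-order correction can be arranged to carry the desired strict sign.

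Finally, once (a) is established, I upgrade the uniform positivity to the full (b). Compactness of $K$ in the open set $\cC^+_\sigma$ yields $\eta>0$ with $K-\eta\mathbf{1}\subset\cC^+_\sigma$. Applying the uniform positivity to this translate gives $\sum f_{\lambda_i}(\mu_i-\lambda_i-\eta)\ge 0$ for $|\lambda|$ large, i.e.\ $\sum f_{\lambda_i}(\mu_i-\lambda_i)\ge \eta\sum f_{\lambda_i}$, which absorbs the $\epsilon\sum f_{\lambda_i}$ term. The additive $+\epsilon$ is produced via concavity of $f$: for a small further radial shift $\mu^+$ of $\mu$ lying inside $\Gamma^\sigma$, $\sum f_{\lambda_i}(\mu^+_i-\lambda_i)\ge f(\mu^+)-\sigma>0$, uniformly in $\mu\in K$ and $\lambda\in\partial\Gamma^\sigma$ by compactness. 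A suitable convex combination of the two shifts, together with rescaling $\epsilon$, then completes (b).
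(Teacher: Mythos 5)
The paper does not prove this theorem at all: it is quoted verbatim from \cite{Guan14} (Theorem 2.16 there), and the entire difficulty of Guan's proof sits exactly where your sketch turns vague. Your reduction of (b) to openness plus a uniform lower bound on $\nu_\lambda\cdot(\mu-\lambda)$, and the compactness argument with $\mu'=\mu^*-\delta\nu$, are sound, and you correctly diagnose the circularity: everything hinges on (a). But your proof of (a) is not a proof. From $\mu_k\to\mu_0$, $\mu_k\notin\cC^+_\sigma$, you only obtain $\lambda_k\in\partial\Gamma^\sigma$ with $|\lambda_k|\to\infty$ and $\nu_{\lambda_k}\cdot(\mu_0-\lambda_k)\le|\mu_0-\mu_k|\to 0$, possibly strictly positive, and you propose to force it $\le 0$ by a ``small tangential slide'', asserting that ``the first-order correction can be arranged to carry the desired strict sign''. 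That assertion is unsubstantiated and in general false: along the hypersurface the first variation of $g(\lambda)=\nu_\lambda\cdot(\mu_0-\lambda)$ in a tangent direction $v$ is $(D_v\nu_\lambda)\cdot(\mu_0-\lambda)$ (the term $-\nu_\lambda\cdot v$ vanishes), so the change is governed by the second fundamental form of the convex level hypersurface, which is only positive semidefinite, vanishes identically in flat directions (e.g.\ $f$ linear, where $g$ is constant along the level set), and cannot be bounded below along sequences going to infinity (a convex hypersurface with curvature uniformly bounded below is compact); moreover a tangential displacement leaves the hypersurface, so one must also project back. You give no control of either the sign or the size of this correction relative to the defect $|\mu_0-\mu_k|$. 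Upgrading the qualitative hypothesis ``$S^\sigma_\mu$ compact'' to a quantitative bound at infinity is precisely the content of the theorem, and your sketch assumes it rather than proves it; with (a) missing, your (b) collapses as well.

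A secondary gap is in the last step: to produce the additive $+\epsilon$ you take ``a small further radial shift $\mu^+$ of $\mu$ lying inside $\Gamma^\sigma$'' and invoke $f(\mu^+)-\sigma>0$. There is no reason such a shift lands in $\Gamma^\sigma$: $\cC^+_\sigma$ is a cone whose compact subsets may lie far from $\Gamma^\sigma$ (for $f=\sigma_N^{1/N}$ one has $\cC^+_\sigma=\Gamma_N$, which contains points arbitrarily close to the origin, while $\Gamma^\sigma$ stays away from it), so $f(\mu^+)>\sigma$ generally fails. The multiplicative part of your upgrade (replacing $K$ by $K-\eta\mathbf{1}$) is fine, but the additive constant needs a different source --- in \cite{Guan14} it emerges from the contradiction argument itself rather than from concavity applied at a point of $K$.
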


By \eqref{P2} we see that the normal vector of any supporting hyperplane to 
${\mathcal{C}}_{\sigma}^+$ belongs to 
$\ol \Gamma_N = \{\lambda \in \bfR^N: \lambda_i \geq 0\}$. 

\begin{definition}[\cite{GN21}]
The {\em rank of ${\mathcal{C}}_{\sigma}^+$} is defined to be
\[ \min \{r (\nu): \mbox{$\nu$ is the unit normal vector of a supporting plane
to ${\mathcal{C}}_{\sigma}^+$}\} \]
where $r (\nu)$ denotes the number of non-zero
components of $\nu$.
\end{definition}

\begin{lemma}[\cite{GGQ21}]
\label{lemma 3} 
Assume \eqref{P3}, \eqref{P2} and \eqref{S0.1} hold. 
Let $r$ be the rank of $\mathcal{C}_{\sigma}^+$.
 There exists $c_0 > 0$ such that at any point 
 $\lambda \in \partial \Gamma^{\sigma}$ where without loss of generality we assume 
 $f_{1} \leq \cdots \leq f_{N}$, 
\[ \sum_{i \leq N-r+1} f_i \geq c_0 \sum f_i. \]
\end{lemma}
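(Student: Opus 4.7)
The plan is a proof by contradiction. Suppose no uniform $c_0 > 0$ works; then there is a sequence $\lambda^{(k)} \in \partial\Gamma^\sigma$, which after relabeling (by symmetry of $f$) satisfies $f_1(\lambda^{(k)}) \leq \cdots \leq f_N(\lambda^{(k)})$, with
\[
\frac{\sum_{i \leq N-r+1} f_i(\lambda^{(k)})}{\sum_i f_i(\lambda^{(k)})} \longrightarrow 0.
\]
I normalize $\nu^{(k)} := (f_1,\ldots,f_N)(\lambda^{(k)})/\sum_i f_i(\lambda^{(k)})$, pass to a subsequential limit $\nu^{(k)} \to \nu$ in the standard simplex, and observe that $\nu_i = 0$ for $i \leq N-r+1$; hence $\nu$ has at most $r-1$ nonzero components while $\sum_i \nu_i = 1$. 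The entire aim is to show that $\nu$, suitably rescaled, is the inward normal of a supporting hyperplane of $\mathcal{C}_\sigma^+$ at the origin, so that the rank hypothesis forces $\nu$ to have at least $r$ nonzero components, a contradiction.

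\textbf{Reduction to $|\lambda^{(k)}|\to\infty$.} In the bounded case $\lambda^{(k)}\to\lambda^\ast\in\partial\Gamma^\sigma$ with $f_i(\lambda^\ast)=0$ for $i\leq N-r+1$. For each such $i$, concavity \eqref{P3} gives $f(\lambda^\ast + te_i)\leq f(\lambda^\ast) + tf_i(\lambda^\ast)=\sigma$, while monotonicity \eqref{P2} gives $f(\lambda^\ast+te_i)\geq\sigma$, so $f\equiv\sigma$ on the ray $\lambda^\ast+te_i$ ($t\geq 0$). Along such a ray, the partials $f_j$ ($j\leq N-r+1$) are non-increasing by concavity yet nonnegative, hence remain zero. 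Iterating over the directions $e_1,\ldots,e_{N-r+1}$ shows that the entire half-flat $\lambda^\ast + \sum_{i\leq N-r+1}t_ie_i$, $t_i\geq 0$, lies on $\partial\Gamma^\sigma$ with the first $N-r+1$ partials still vanishing. Choosing $\lambda^{(k)}$ to diverge inside this half-flat reduces to the unbounded case.

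\textbf{Main step.} Fix $\mu\in\mathcal{C}_\sigma^+$ and $T>1$. The set $K_T:=\{t\mu: 1\leq t\leq T\}$ is a compact subset of $\mathcal{C}_\sigma^+$, so Theorem~\ref{Guan14-thm10}(b) yields $\epsilon_T,R_T>0$ such that for all $|\lambda|\geq R_T$ and $t\in[1,T]$,
\[
\sum_i f_i(\lambda)(t\mu_i-\lambda_i) \geq \epsilon_T\sum_i f_i(\lambda) + \epsilon_T.
\]
Setting $t=T$, dividing through by $T\sum_i f_i(\lambda^{(k)})$, and using \eqref{S0.1} in the form $\sum_i\nu^{(k)}_i\lambda^{(k)}_i\geq -C_0$ produce
\[
\sum_i\nu^{(k)}_i\mu_i \geq \frac{\epsilon_T-C_0}{T}+\frac{\epsilon_T}{T\sum_i f_i(\lambda^{(k)})} \geq -\frac{C_0}{T}.
\]
Letting $k\to\infty$ and then $T\to\infty$ gives $\nu\cdot\mu\geq 0$; since $\mu\in\mathcal{C}_\sigma^+$ was arbitrary, the hyperplane $\{\nu\cdot x=0\}$ supports $\mathcal{C}_\sigma^+$ at the origin. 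By the definition of rank, $\nu$ must then have at least $r$ nonzero components, the desired contradiction.

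\textbf{Main obstacle.} The delicate point is the rigidity underlying the reduction to $|\lambda^{(k)}|\to\infty$: one must show that vanishing of the smallest $N-r+1$ partials at a finite point of $\partial\Gamma^\sigma$ actually generates a whole flat half-space inside $\partial\Gamma^\sigma$ on which those partials persist to vanish, so that the asymptotic estimate of Theorem~\ref{Guan14-thm10} together with the distance bound from \eqref{S0.1} becomes applicable. Once this rigidity is established, the convex-geometric identification of $\nu$ as a supporting normal is essentially routine.
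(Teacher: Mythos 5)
Your overall strategy is sound and is essentially the expected one for this lemma (which the paper itself only quotes from \cite{GGQ21}): argue by contradiction, normalize the gradient to $\nu^{(k)}=Df(\lambda^{(k)})/\sum_i f_i(\lambda^{(k)})$, use Theorem~\ref{Guan14-thm10}\,(b) together with \eqref{S0.1} to show that the limit $\nu$ is the (nonnegative) normal of a supporting plane of $\mathcal{C}^+_{\sigma}$ through the origin, and contradict the definition of rank. Your main step is correct, including the scaling trick of testing against $T\mu\in\mathcal{C}^+_{\sigma}$ and letting $T\to\infty$, which is exactly what is needed to upgrade $\nu\cdot\mu\ge -C_0$ to $\nu\cdot\mu\ge 0$.

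There is, however, one step whose justification fails as written: the rigidity argument in your reduction to $|\lambda^{(k)}|\to\infty$. You claim that along the ray $\lambda^{\ast}+te_i$ ``the partials $f_j$ ($j\le N-r+1$) are non-increasing by concavity.'' Concavity only controls the directional derivative in the direction of travel: $t\mapsto f_i(\lambda^{\ast}+te_i)$ is non-increasing, whereas for $j\neq i$ one has $\partial_t f_j(\lambda^{\ast}+te_i)=f_{ji}$, an off-diagonal Hessian entry, which has no sign for a concave function (e.g.\ $f(x,y)=-(x-y)^2$). So the iteration over $e_1,\dots,e_{N-r+1}$ is not justified as stated. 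The conclusion you want is nevertheless true, with a shorter argument: since $f_i(\lambda^{\ast})=0$ for all $i\le N-r+1$, for $v=\sum_{i\le N-r+1}t_ie_i$, $t_i\ge 0$, concavity gives $f(\lambda^{\ast}+v)\le f(\lambda^{\ast})+Df(\lambda^{\ast})\cdot v=\sigma$, while \eqref{P2} gives $f(\lambda^{\ast}+v)\ge\sigma$ (note $\lambda^{\ast}+v\in\Gamma$ because $\Gamma$ is an open convex cone containing $\Gamma_N$); hence $f\equiv\sigma$ on the whole half-flat in one stroke, and differentiating $t\mapsto f(\lambda'+te_j)\equiv\sigma$ at $t=0^{+}$ shows $f_j(\lambda')=0$ for every $j\le N-r+1$ and every $\lambda'$ in the half-flat. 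With this repair the reduction, and hence the whole proof, goes through. You should also record two small points in the bounded case: the limit $\lambda^{\ast}$ lies in $\Gamma$ (not on $\partial\Gamma$) because $f=\sigma>\sup_{\partial\Gamma}f$ along the sequence, and $\sum_i f_i(\lambda^{\ast})>0$ (otherwise $Df(\lambda^{\ast})=0$ and concavity would make $\lambda^{\ast}$ a global maximum, contradicting $\sigma<\sup_{\Gamma}f$), so that the ratio passes to the limit and indeed forces $f_i(\lambda^{\ast})=0$ for $i\le N-r+1$.
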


\bigskip

\section{Second Order Estimates}
\label{S}
\setcounter{equation}{0}

\medskip


Throughout this article $\nabla$ denotes the Chern connection of the metric $\omega$, so $\nabla$ is the unique connection on $TM$ satisfying 
\[ d \hspace{1pt}\langle s_1,s_2\rangle=\langle\nabla s_1, s_2\rangle+\langle s_1,\nabla s_2\rangle \text{ and } \nabla^{0,1}=\bpartial \] 
for any smooth sections $s_1$ and $s_2$ of $TM$, where $\nabla^{0,1}$ denotes the projection of $\nabla$ to $T^{0,1}M$. 
In local coordinates $\nabla$ is determined by 
\[ \nabla_i \partial_j =\Gamma^{k}_{ij} \partial_k \text{ where } \Gamma_{ij}^k=g^{k\bl}\partial_ig_{j\bl}, \]
while the torsion and curvature tensors are given by
\[ T_{ij}^l=\Gamma^l_{ij}-\Gamma^l_{ji} \]
and
\[ R_{i\bj k\bl }=-\partial_{\bj} \partial_i g_{k\bl}+g^{p\bq}\partial_ig_{k\bq}\partial_{\bj}g_{p\bl}, \]
respectively.

We establish second order estimates for admissible solutions of 
equation~\eqref{pde2} in this section.

\begin{theorem}
\label{theorem-I20}
Let $\phi \in C^{4,1} (M \times (0,T])$ be an admissible solution of 
equation~\eqref{pde2} where $T > 0$.
Assume conditions \eqref{P2}-\eqref{P4}, ~\eqref{S0.1} and \eqref{S0.2} hold. 
There exists constant $C$ depending on $|\phi|_{C^1 (M \times [0, T])}$, $T$ and
\begin{equation}
\label{S0.3}
   c_0 [\phi] := \lim_{t \to + \infty} f (t \mathbf{1}) - \sup_M \psi [\phi] 
   = \sup_{\Gamma} f - \sup_M \psi [\phi] > 0,
\end{equation}
where ${\bf 1} = (1, \ldots, 1) \in \Gamma \subset \bfR^N$,  as well as geometric quantities of $(M, \omega)$ such that
\[ \sup_{M \times [0,T)} |\partial \bpartial \phi|_{\omega} \leq C. \]
\end{theorem}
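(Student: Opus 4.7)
The plan is to apply the maximum principle to a test function of the form
\[ W(z,t) = \log \lambda_1(\fg) + h(|\nabla \phi|^2) + A \phi, \]
where $\lambda_1$ denotes the largest eigenvalue of the $(1,1)$-form $\fg = \sqrt{-1}\partial \bpartial \phi + \chi$ with respect to $\omega$, $h$ is an increasing concave function (e.g.\ $h(s) = -\log(K-s)$), and $A>0$ is a constant to be chosen. Since admissibility forces $\Lambda(Z) \in \Gamma \subset \Gamma_N$, once $\lambda_1$ is bounded from above, \eqref{gg-C2-115} bounds each diagonal $Z_{I\bI}$ from above; positivity on $\Gamma_N$ then yields a lower bound on the smallest eigenvalue of $\fg$, hence on $|\partial\bpartial\phi|_\omega$. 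Thus it suffices to bound $\lambda_1$ at an interior maximum $(z_0,t_0)$ of $W$ on $M \times [0,T]$ (the case $t_0=0$ being controlled by $\phi_0$).

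At such an interior maximum, after a unitary change of frame diagonalizing $\{\fg_{i\bj}\}$ and perturbing $\lambda_1$ to $\fg_{1\bar 1}$, we have $W_t \geq 0$, $\nabla W = 0$, and $G^{i\bj}W_{i\bj} \leq 0$. Differentiating the equation once in $z_1$ and using $\nabla W = 0$ expresses $\nabla \fg_{1\bar 1}$ in terms of the $h$-term plus lower order quantities whose gradient contributions are bounded via \eqref{G0.1a}. Differentiating twice and commuting covariant derivatives (picking up torsion and curvature of $\omega$, together with contributions from $\nabla X$ and $\nabla \psi$ estimated by \eqref{G0.5a}) yields, thanks to the concavity~\eqref{P3} of $f$,
\[ F^{I\bJ, K\bL} Z_{I\bJ, 1} \,\overline{Z_{K\bL, 1}} \leq 0, \]
which is the decisive ``good'' third-order term. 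Converting these $N\times N$ derivatives into derivatives of the $n\times n$ matrix $\{\fg_{i\bj}\}$ via \eqref{gg-C2-115} produces cross terms involving $\partial X$ that are controlled by \eqref{G0.1a}, \eqref{G0.5a} and the $C^1$-bound.

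The key coercive input is Lemma~\ref{lemma-P10}, which gives $\sum G^{i\bj}\xi_i\bar\xi_j \geq f_\alpha |\xi|^2$ for $\alpha = pN/n$. Combined with rank condition~\eqref{S0.2} and Lemma~\ref{lemma 3}, one has $N - r + 1 \leq \alpha$, hence $\alpha f_\alpha \geq \sum_{i \leq \alpha} f_i \geq c_0 \sum f_i$, producing a positive lower bound of $f_\alpha$ by a definite fraction of the full trace $\sum f_i$. This is what absorbs the bad quadratic-in-$\nabla\phi$ error terms generated by $h(|\nabla\phi|^2)$ and by the gradient dependence of $\chi$, $X$, $\psi$. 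Finally, the linear term $A\phi$ contributes $A\,G^{i\bj}\fg_{i\bj} - A\,G^{i\bj}\chi_{i\bj}$, and $G^{i\bj}\fg_{i\bj}$ equals a trace-type quantity involving $\Lambda(Z)$; invoking~\eqref{S0.1} together with Theorem~\ref{Guan14-thm10} applied to a fixed vector $\mu \in \mathcal{C}^+_\sigma$ (whose existence is guaranteed by $c_0[\phi] > 0$), we obtain a positive reserve $\epsilon A (\sum f_i + 1)$ that dominates the remaining error terms for $A$ large.

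The main obstacle is twofold. First, we must bridge the mismatch between the $N \times N$ matrix $\{Z_{I\bJ}\}$ whose concavity we exploit and the $n \times n$ matrix $\{\fg_{i\bj}\}$ whose largest eigenvalue we control; this is done only via the algebraic identity \eqref{gg-C2-124} and the refined coercivity of Lemma~\ref{lemma-P10}, whose rank hypothesis is precisely \eqref{S0.2}. Second, the gradient dependence in $X[\phi]$ and $\psi[\phi]$ produces error terms that scale like $\lambda_1 \cdot |\nabla\phi|$; the sublinear growth assumptions \eqref{G0.1a}--\eqref{G0.5a} are tailored so that these errors are swallowed by the coercive $f_\alpha$-contribution rather than dominating it. Once all terms are balanced, evaluating $(\partial_t - G^{i\bj}\nabla_i\nabla_{\bar j})W \geq 0$ at $(z_0,t_0)$ forces $\lambda_1(z_0,t_0) \leq C$ for a constant $C$ of the stated dependence, completing the proof.
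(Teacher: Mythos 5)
Your overall strategy coincides with the paper's: a maximum principle applied to (a perturbation of) the largest eigenvalue of $\fg$ coupled with a gradient function, concavity of $f$ to dispose of the $F^{I\bJ,K\bL}$ terms, the bridge \eqref{gg-C2-115}/\eqref{gg-C2-124} between the $N\times N$ and $n\times n$ levels, and the coercivity from Lemma~\ref{lemma-P10} combined with Lemma~\ref{lemma 3} under the rank condition \eqref{S0.2} (your chain $\alpha f_\alpha\geq\sum_{i\leq\alpha}f_i\geq c_0\sum f_i$ is exactly what underlies \eqref{S31}). Your endgame differs harmlessly: the paper adds no $A\phi$ term and never invokes Theorem~\ref{Guan14-thm10} here; it instead uses concavity together with $c_0[\phi]>0$ to get $\sqrt{\fg_{1\bar1}}\sum f_{\Lambda_I}+\fg_{1\bar1}^{-1/2}\sum f_{\Lambda_I}\Lambda_I^2\geq c_0$, leading to \eqref{S32}.

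The genuine gap is in the third-order terms. Your test function $\log\lambda_1+h(|\nabla\phi|^2)+A\phi$, with $\lambda_1$ simply identified with $\fg_{1\bar1}$ at the maximum, supplies no positive third-order quantity; yet on a Hermitian manifold the commutation formula \eqref{S13} produces terms $-G^{i\bj}\bigl(T^l_{i1}\nabla_{\bj}\fg_{l\bar1}+\ol{T^l_{j1}}\nabla_i\fg_{1\bl}\bigr)$ that are genuinely third order, and the critical-point equations only control $\nabla_i\fg_{1\bar1}$, not $\nabla_i\fg_{1\bl}$ for $l>1$. The concavity term $F^{I\bJ,K\bL}\nabla_1Z_{I\bJ}\nabla_{\bar1}Z_{K\bL}\leq0$, which you call the decisive good term, is merely discarded in the paper -- it absorbs nothing. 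This is precisely why the paper uses the Tosatti--Weinkove/Guan--Nie quantity \eqref{S1} with the extra factor $(g^{k\bl}\fg_{i\bl}\fg_{k\bj}\xi_i\bar\xi_j)^{\gamma/2}$: differentiating it produces the positive terms $\frac{\gamma}{\fg_{1\bar1}^2}\sum_{l>1}G^{i\bj}\nabla_i\fg_{1\bl}\nabla_{\bj}\fg_{l\bar1}$ in \eqref{S9}--\eqref{S12}, against which the torsion (and analogous gradient-dependence) terms are absorbed by Cauchy--Schwarz in \eqref{S14}. To salvage your version you would need a substitute, e.g.\ the second variation of $\lambda_1$ (Sz\'ekelyhidi-type terms $\sum_{p>1}(\lambda_1-\lambda_p)^{-1}G^{i\bj}\nabla_i\fg_{p\bar1}\nabla_{\bj}\fg_{1\bar p}$) together with a bound $\lambda_1-\lambda_p\leq C\lambda_1$; as written the proposal has no mechanism for these terms. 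A minor slip: the inclusion you state, $\Gamma\subset\Gamma_N$, is backwards -- the hypothesis is $\Gamma_N\subset\Gamma$; the reduction from an upper bound on $\lambda_1$ to a full bound on $|\partial\bpartial\phi|_\omega$ still works because $\Gamma\subset\Gamma_1$ controls the trace of $\fg$ from below.
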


It is clearly enough to derive an upper bound for $\fg = \chi + \sqrt{-1} \partial  \bar \partial \phi$. 
We follow \cite{GN21} to use an idea of Tossati-Weinkove \cite{TW17} and consider
the quantity which is given in local coordinates 
\begin{equation}\label{S1}
\begin{aligned}
A:= \sup\limits_{(z,t)\in M\times [0,T)}\max\limits_{\xi\in T_z^{1,0}M}e^{(1+\gamma)\eta}\fg_{p\bq}\xi_p\overline{\xi}_q(g^{k\bl}\fg_{i\bl}\fg_{k\bj}\xi_i\overline{\xi}_j)^{\frac{\gamma}{2}}/|\xi|^{2+\gamma}
\end{aligned}
\end{equation}
where $\eta$ is a function depending on $|\nabla\phi|$ and $\gamma >0$ is a small constant to be chosen.
Assume that $A$ is achieved at a point $(z_0, t_0) \in M \times [0,T)$ for some 
$\xi\in T_{z_0}^{1,0}M$. We choose local coordinates around $z_0$ such that 
$g_{i\bj}=\delta_{ij}$ and $T_{ij}^k=2\Gamma^k_{ij}$ using a lemma of Streets and Tian \cite{ST11}, and that $\fg_{i\bj}$ is diagonal at $z_0$ with 
$\fg_{1\bar{1}} \geq \cdots \geq \fg_{n\bn}$. We shall assume 
$\fg_{1\bar{1}} \geq 1$; otherwise we are done.

By \cite{TW17} (see also \cite{GN21}) $\xi=\partial_1$ at $(z_0,t_0)$
for $\gamma$ sufficiently small. 
Let $W = g_{1\bar{1}}^{-1} g^{k\bl} \fg_{1\bl} \fg_{k\bar{1}}$.
We follow \cite{GN21} to calculate at $(z_0,t_0)$ where
$W = \fg_{1\bar{1}}^2$,
\begin{equation}
\label{S2}
 \begin{aligned}
\frac{\partial_i (g_{1\bar{1}}^{-1} \fg_{1\bar{1}})}{\fg_{1\bar{1}}}
   +  \frac{\gamma \partial_i W}{2 W} + (1+ \gamma)  \partial_i \eta = \,& 0, 
   \;\; 1 \leq i \leq n 
\end{aligned}
\end{equation}
\begin{equation}
\label{S2t}
 \begin{aligned}
\frac{\partial_t (g_{1\bar{1}}^{-1} \fg_{1\bar{1}})}{\fg_{1\bar{1}}}
   +  \frac{\gamma \partial_t W}{2 W} + (1+ \gamma)  \partial_t \eta \geq \,& 0 
\end{aligned}
\end{equation}
and
\begin{equation}
\label{S3}
\begin{aligned}
0 \geq \,&
   \frac{1}{\fg_{1\bar{1}}} G^{i\bj} \bpartial_j \partial_i (g_{1\bar{1}}^{-1} \fg_{1\bar{1}})
   - \frac{1}{\fg_{1\bar{1}}^2} G^{i\bj} \partial_i (g_{1\bar{1}}^{-1} \fg_{1\bar{1}}) 
        \bpartial_j (g_{1\bar{1}}^{-1} \fg_{1\bar{1}}) \\
  & + \frac{\gamma}{2 W} G^{i\bj} \bpartial_j \partial_i W
     - \frac{\gamma}{2 W^2} G^{i\bj} \partial_i W \bpartial_j W
   + (1 + \gamma) G^{i\bj} \bpartial_j \partial_i \eta.
\end{aligned}
\end{equation}

We make use of the following identities derived in \cite{GN21},
\begin{equation}
\label{S4}
\begin{aligned}
\partial_i (g_{1\bar{1}}^{-1} \fg_{1\bar{1}}) = \nabla_i \fg_{1\bar{1}}, \;\;
 \partial_i W 
 = 2 \fg_{1\bar{1}} \nabla_i \fg_{1\bar{1}},
     \end{aligned}
      \end{equation}
\begin{equation}
\label{S5}
 \begin{aligned}
\bpartial_j  \partial_i (g_{1\bar{1}}^{-1} \fg_{1\bar{1}})
       = \,& \nabla_{\bj} \nabla_i \fg_{1\bar{1}}
    + (\ol{\Gamma_{j1}^m} \nabla_i \fg_{1\bm}
        - \ol{\Gamma_{j1}^1} \nabla_i \fg_{1\bar{1}}) \\
      & + (\Gamma_{i1}^m \nabla_{\bj} \fg_{m\bar{1}}
            - \Gamma_{i1}^1 \nabla_{\bj} \fg_{1\bar{1}})
      + (\Gamma_{i1}^1  \ol{\Gamma_{j1}^1} - \Gamma_{i1}^m \ol{\Gamma_{j1}^m}) \fg_{1\bar{1}}.
     \end{aligned}  
 \end{equation}     
      and
\begin{equation}
\label{S6}
 \begin{aligned}
\bpartial_j \partial_i W
   = \,& 2 \fg_{1\bar{1}} \nabla_{\bj} \nabla_i \fg_{1\bar{1}}
   + 2 \nabla_i \fg_{1\bar{1}} \nabla_{\bj} \fg_{1\bar{1}}
   + \sum_{l>1} \nabla_i \fg_{l\bar{1}} \nabla_{\bj} \fg_{1\bl}    \\
     &  + \sum_{l > 1} (\nabla_i \fg_{1\bl} + {\Gamma_{i1}^l} \fg_{l\bl})
       (\nabla_{\bj} \fg_{l\bar{1}} + \ol{\Gamma_{j1}^l} \fg_{l\bl}) \\
   & + \fg_{1\bar{1}} \sum_{l > 1} (\ol{\Gamma_{j1}^l} \nabla_i \fg_{1\bl}
       + \Gamma_{i1}^l  \nabla_{\bj} \fg_{l\bar{1}}) \\
  &    -  \fg_{1\bar{1}} \sum_{l > 1} \Gamma_{i1}^m \ol{\Gamma_{j1}^m} (\fg_{1\bar{1}} +  \fg_{l\bl)}.
              \end{aligned}
 \end{equation}
Therefore, 
\begin{equation}
\label{S7}
\begin{aligned}
G^{i\bj} \partial_i W \bpartial_j W
= 4 \fg_{1\bar{1}}^2 G^{i\bi} \nabla_i \fg_{1\bar{1}} \nabla_{\bj} \fg_{1\bar{1}},
\end{aligned}
\end{equation}
\begin{equation}
\label{S8}
\begin{aligned}
G^{i\bj} \partial_i (g_{1\bar{1}}^{-1} \fg_{1\bar{1}}) \bpartial_j (g_{1\bar{1}}^{-1} \fg_{1\bar{1}})
= G^{i\bj} \nabla_i \fg_{1\bar{1}} \nabla_{\bj} \fg_{1\bar{1}}
\end{aligned}
\end{equation}
and by Cauchy-Schwarz inequality,
\begin{equation}
\label{S9}
 \begin{aligned}
G^{i\bj} \bpartial_j \partial_i W
\geq \,& 2 \fg_{1\bar{1}} G^{i\bj} \nabla_{\bj} \nabla_i \fg_{1\bar{1}}
    + 2 G^{i\bj}  \nabla_i \fg_{1\bar{1}} \nabla_{\bj} \fg_{1\bar{1}} \\
  + \,& \sum_{l > 1} G^{i\bj} \nabla_i \fg_{1\bl} \nabla_{\bi} \fg_{l\bar{1}}
  + \frac{1}{2} \sum_{l > 1} G^{i\bj} \nabla_i \fg_{1\bl} \nabla_{\bj} \fg_{l\bar{1}}
   - C \fg_{1\bar{1}}^2 \sum G^{i\bi},
 \end{aligned}
 \end{equation}
\begin{equation}
\label{S10}
 \begin{aligned}
G^{i\bj} \bpartial_j  \partial_i (g_{1\bar{1}}^{-1} \fg_{1\bar{1}})
\geq \,& G^{i\bj} \nabla_{\bj} \nabla_i \fg_{1\bar{1}}
  - \frac{\gamma}{8 \fg_{1\bar{1}}} \sum_{l > 1} G^{i\bj}
            \nabla_i \fg_{1\bl} \nabla_{\bj} \fg_{l\bar{1}}
        - C \fg_{1\bar{1}} \sum G^{i\bi}.
 \end{aligned}
 \end{equation}

By \eqref{S2}, \eqref{S2t} and \eqref{S4}, 
  \begin{equation}
\label{S11}
 \begin{aligned}
\nabla_i \fg_{1\bar{1}} + \fg_{1\bar{1}} \partial_i \eta = 0, \;\;
\nabla_{\bi} \fg_{1\bar{1}} + \fg_{1\bar{1}} \bpartial_i \eta = 0,
\end{aligned}
\end{equation}
 \begin{equation}
\label{S11t}
 \begin{aligned}
\partial_t \fg_{1\bar{1}} + \fg_{1\bar{1}} \partial_t \eta \geq 0, 
\end{aligned}
\end{equation}
while by \eqref{S3}-\eqref{S11},
 \begin{equation}
\label{S12}
\begin{aligned}
0  \geq \,& \frac{1}{\fg_{1\bar{1}}}  G^{i\bj} \nabla_{\bj} \nabla_i \fg_{1\bar{1}}
     - G^{i\bj}  \nabla_i \eta \nabla_{\bj} \eta + G^{i\bj} \bpartial_j \partial_i \eta \\
 & + \frac{\gamma}{\fg_{1\bar{1}}^2} \sum_{l > 1} 
         G^{i\bj} \nabla_i \fg_{1\bl} \nabla_{\bj} \fg_{l\bar{1}} 
    + \frac{\gamma}{16 \fg_{1\bar{1}}^2} \sum_{l > 1} G^{i\bj}
            \nabla_i \fg_{1\bl} \nabla_{\bj} \fg_{l\bar{1}}  - C \sum G^{i\bi}.
\end{aligned}
\end{equation}
Using the formula
\begin{equation}
\label{S13}
 \begin{aligned}
 \nabla_{\bj} \nabla_i \fg_{1\bar{1}}  -  \nabla_{\bar{1}} \nabla_1 \fg_{i\bj}
   = \,&  R_{i\bj1\bar{1}} \fg_{1\bar{1}} - R_{1\bar{1} i\bj} \fg_{i\bj}
         - T_{i1}^l \nabla_{\bj} \fg_{l\bar{1}}  - \ol{T_{j1}^l} \nabla_i \fg_{1\bl} \\
     &  - T_{i1}^l  \ol{T_{j1}^l} \fg_{l\bl} + H_{i\bj}
  \end{aligned}
 \end{equation}
we obtain 
\begin{equation}
\label{S14}
 \begin{aligned}
G^{i\bj} \nabla_{\bj} \nabla_i \fg_{1\bar{1}}
   \geq \,& G^{i\bj} \nabla_{\bar1} \nabla_1\fg_{i\bj}
         -  \frac{\gamma}{32 \fg_{1\bar{1}}} 
             \sum\limits_{l>1}G^{i\bj}  \nabla_i \fg_{1\bl} \nabla_{\bj} \fg_{l\bar{1}} \\
       &  - C \fg_{1\bar{1}} \sum G^{i\bi} + G^{i\bj} H_{i\bj};
 \end{aligned}
 \end{equation}
 see e.g. \cite{GN21}, where
 \[ \begin{aligned}
    H_{i\bj} = \,& \nabla_{\bj} \nabla_i \chi_{1\bar{1}}
                       -  \nabla_{\bar{1}} \nabla_1 \chi_{i\bj}
      - T_{i1}^l \nabla_{\bj} \chi_{l\bar{1}} - \ol{T_{j1}^l} \nabla_i \chi_{1\bl} \\
    &  + R_{i\bj 1\bl} \chi_{l\bar{1}} - R_{1\bar{1} i\bl} \chi_{l\bj}
       - T_{i1}^k  \ol{T_{j1}^l} \chi_{k\bl}.
  \end{aligned} \]

We now make use of equation~\eqref{pde2} which we rewrite as
\begin{equation}
 \label{gg-C2-128}
        \frac{\partial \phi}{\partial t} = F (Z_{I\bJ}) - \psi[\phi].
 \end{equation}
Differentiate \eqref{gg-C2-128} twice to obtain
 \begin{equation}
\label{gg-C2-130} 
     \partial_k \phi_t =  F^{I\bJ} \nabla_k Z_{I\bJ} - \partial_k \psi [\phi]
\end{equation}
and, by the concavity of $f$,
\begin{equation}
\label{gg-C2-140}
  \begin{aligned}
     \bar \partial_{k} \partial_{k} \phi_t 
      = \,& F^{I\bJ} \nabla_{\bk} \nabla_k Z_{I\bJ} 
              + F^{I\bJ, K\bL} \nabla_k Z_{I\bJ} \nabla_{\bk} Z_{K\bL} 
              - \bar \partial_{k} \partial_k \psi [\phi] \\
 \leq \,& F^{I\bJ} \nabla_{\bk} \nabla_k Z_{I\bJ} 
             - \bar \partial_{k} \partial_k \psi [\phi].
      \end{aligned}
\end{equation}

Next, 
\begin{equation}
\label{S19}
\partial_k \psi [\phi] = \psi_k + \psi_{\phi} \partial_k \phi 
        + \psi_{\zeta_{\alpha}} \partial_k \partial_{\alpha} \phi 
        + \psi_{\bzeta_\alpha} \partial_k \bpartial_{\alpha} \phi
        \end{equation}  
and, by \eqref{S11} and \eqref{S11t}, 
\begin{equation}
\label{S21.5}
    \begin{aligned}
 \bar \partial_{1} \partial_1 \phi_t
      = \,& \partial_t (\fg_{1\bar 1} - \chi_{1\bar 1}) \\
      = \,& \partial_t \fg_{1\bar 1} - \chi_{1\bar{1},\phi} \phi_t 
                - 2 \fRe\{\chi_{1\bar{1}, \zeta_{\alpha}} \partial_{\alpha} \phi_t\},
    \end{aligned}
\end{equation} 
\begin{equation}
\label{S20}
\begin{aligned}
  \bar \partial_1 \partial_1 \psi [\phi]
     \geq \,& 2 \fRe\{\psi_{\zeta_\alpha} \nabla_{\bar{1}} \nabla_1 \nabla_{\alpha} \phi\}
                  -  C |A_1|^2 \\   
     \geq \,& 2 \fRe\{\psi_{\zeta_\alpha} \nabla_{\alpha} \fg_{1\bar{1}}\}
                  - C |A|^2 \\
      =  \,& - 2 \fg_{1\bar{1}} 
                 \fRe\{\psi_{\zeta_\alpha} \partial_{\alpha} \eta\}
                - C |A|^2 
\end{aligned}
\end{equation}   
where and in the sequel we denote
\[ |A_i|^2 = \fg_{i\bi}^2 + \sum_{k} |\nabla_i \nabla_k \phi|^2, \;\; 
    |A|^2 =  \sum |A_i|^2. \]
Similarly, 
\begin{equation}
\label{gg-C2-160'}
 \begin{aligned}
F^{I\bJ} \nabla_{\bar 1} \nabla_1 X_{I\bJ} 
   \leq \,&  2 \fRe\{F^{I\bJ} X_{I\bJ, \zeta_{\alpha}} \nabla_{\alpha} \fg_{1 \bar{1}}\}
                    + C |A|^2 \sum G^{i\bi}  \\
       =  \,& -  2 \fg_{1\bar{1}}  
                   \fRe\{F^{I\bJ} X_{I\bJ, \zeta_{\alpha}} \partial_{\alpha} \eta\}
                    + C |A|^2 \sum G^{i\bi}.
             \end{aligned}
 \end{equation}  
It follow from \eqref{gg-C2-125}, 
\eqref{gg-C2-140} and \eqref{S21.5}-
\eqref{gg-C2-160'} that  
\begin{equation}
 \label{gg-C2-125'}
 \begin{aligned}
G^{i\bj} \nabla_{\bar 1} \nabla_1 \fg_{i\bar{j}} 
       = \,& \sum_{I' \in \fI': i, j \notin I'} F^{I'_i\bar{I'}_j} \nabla_{\bar 1} 
                \nabla_1 \fg_{i\bj} \\
       = \,& \sum_{I' \in \fI': i, j \notin I'} F^{I'_i\bar{I'}_j} 
             \nabla_{\bar 1} \nabla_1(Z_{I'_i I'_\bj} - X_{I'_i I'_\bj}) \\
      = \,& \sum_{I, J \in \fI} F^{I\bar{J}} \nabla_{\bar 1} \nabla_1(Z_{I\bJ} - X_{I\bJ}) \\
  \geq \,& \bar \partial_{1} \partial_1  (\phi_t + \psi) - F^{I\bar{J}} 
               \nabla_{\bar 1} \nabla_1 X_{I\bJ}  \\
 \geq  \,& - \fg_{1\bar{1}}  \eta_t - 2 \fg_{1\bar{1}} 
                \fRe\{(\psi_{\zeta_\alpha} - F^{I\bJ} X_{I\bJ, \zeta_{\alpha}} )
                \partial_{\alpha} \eta\} \\
              &  - \chi_{1\bar{1},\phi} \phi_t 
                 - 2 \fRe\{\chi_{1\bar{1}, \zeta_{\alpha}} \partial_{\alpha} \phi_t\} - C |A|^2 \sum G^{i\bi}.
  \end{aligned}
 \end{equation}
 We shall also need
 \begin{equation}
 \label{gg-C2-125'1}
 \begin{aligned}
   \partial_k  \phi_t - G^{i\bj} \nabla_k \fg_{i\bj} 
       = \,& F^{I\bar{J}} \nabla_k X_{I\bJ} -  \nabla_k \psi 
   \leq  C |A| \sum G^{i\bi} + C |A|.
  \end{aligned}
 \end{equation}

If $\chi$ is simply a smooth $(1,1)$ form independent of $\phi$ and its gradient, then
\begin{equation}
\label{S18'}
G^{i\bj} H_{i \bj} \geq -  C \sum G^{i\bi}.
    \end{equation} 
In general for $\chi = \chi (\partial \phi, \bar \partial \phi, \phi, z)$, 
a similar calculation using \eqref{S11} and \eqref{gg-C2-130} 
 yields 

\begin{equation}
\label{S18}
\begin{aligned}
G^{i\bj} H_{i \bj} 
   \geq \,&  2 \fRe\{G^{i\bj} \chi_{1 \bar{1}, \zeta_{\alpha}}
                     \nabla_{\alpha} \fg_{i\bj}\} 
                -  2 \fg_{1\bar{1}} \fRe\{G^{i\bj}  \chi_{i \bj, \zeta_{\alpha}} \nabla_{\alpha} \eta\}
                    - C |A|^2 \sum G^{i\bi}.  \\
 \geq \,& 2 \fRe\{\chi_{1 \bar{1}, \zeta_{\alpha}}  \partial_{\alpha} (\psi+\phi_t)\}
     + 2 \fg_{1\bar{1}} \fRe\{G^{i\bj}  \chi_{i \bj, \zeta_{\alpha}} \partial_{\alpha} \eta\} 
        - C |A|^2 \sum G^{i\bi} \\
 \geq \,& 2 \fg_{1\bar{1}} \fRe\{G^{i\bj}  \chi_{i \bj, \zeta_{\alpha}} \partial_{\alpha} \eta\} 
             +  2 \fRe\{\chi_{1\bar{1}, \zeta_{\alpha}} \partial_{\alpha} \phi_t\}
             - C |A|^2 \sum G^{i\bi} - C |A|.        
             \end{aligned}
 \end{equation}  

Let $\cL$ denote the linearized operator
\begin{equation}
\label{S181}
 \cL = \partial_t - G^{i\bj} \bpartial_j \partial_i - B^{\alpha} \partial_{\alpha}
- \bar B^{\alpha} \bar \partial_{\alpha} 
\end{equation}
where 
\[          B^{\alpha} = F^{I\bJ} X_{I\bJ, \zeta_{\alpha}} 
                + G^{i\bj}  \chi_{i \bj, \zeta_{\alpha}} -  \psi_{\zeta_\alpha}.           \]
We derive from \eqref{S12}, \eqref{S14}, \eqref{gg-C2-125'} and \eqref{S18},
\begin{equation}
\label{S12'}
\begin{aligned}
- \fg_{1\bar{1}} \cL \eta  
     \leq \,& - \fg_{1\bar{1}} G^{i\bj}  \partial_i \eta \bar \partial_{j} \eta
                  + C |A|^2 \sum G^{i\bi} + C |A|.
\end{aligned}
\end{equation}

 Let $\eta=-\log{h}$, where $h=1-\gamma|\nabla \phi|^2$ and require that 
 $\gamma > 0$ be sufficiently small to satisfy $2 \gamma |\nabla \phi|^2 \leq 1$. 
 By straightforward calculations using \eqref{gg-C2-125'1},
\begin{equation}
\label{S27}
    \begin{aligned}
 \partial_t \eta = \frac{\gamma}{h} \partial_t |\nabla \phi|^2 
     =  \frac{2 \gamma}{h} \fRe\{\bar \partial_k \phi (G^{i\bj} \nabla_k \fg_{i\bj} 
             - F^{I\bJ} \nabla_k X_{I\bJ} - \partial_{k} \psi)\},
    \end{aligned}
\end{equation}
\begin{equation} \label{S23}
 \partial_i  \eta = \frac{\gamma}{h} \partial_i  |\nabla \phi|^2 
       = \frac{\gamma}{h}  (\nabla_k \phi \nabla_i \nabla_{\bk} \phi 
      + \nabla_{\bk} \phi \nabla_i \nabla_k \phi)
\end{equation}
and
\begin{equation}
\label{S24}
  \begin{aligned}
G^{i\bj} (\bpartial_j \partial_i\eta - \partial_i \eta \bar \partial_j \eta)
=  \,& \frac{\gamma}{h} G^{i\bj} \bpartial_j \partial_i |\nabla \phi|^2 \\
     \geq  \,&  \frac{\gamma (1 - \gamma)}{h}  G^{i\bi} \fg_{i\bi}^2 
                   + \frac{\gamma}{h} G^{i\bj} \nabla_i \nabla_{k} \phi \nabla_{\bj} \nabla_{\bk} \phi \\
      &    + \frac{2 \gamma}{h} \fRe\{G^{i\bj} \nabla_{\bk} \phi 
      \nabla_k (\fg_{i\bj} - \chi_{i\bj})\}
          - C |A| \sum G^{i\bi}.
\end{aligned}
\end{equation}
From \eqref{S27}-\eqref{S24} we derive 

\begin{equation}
\label{S12'a}
\begin{aligned}
-  \cL \eta + G^{i\bj}  \partial_i \eta \bar \partial_{j} \eta
  \geq \,& \frac{\gamma (1 - \gamma)}{h}  G^{i\bi} \fg_{i\bi}^2 + \frac{\gamma}{h} 
                 G^{i\bj} \nabla_i \nabla_{k} \phi \nabla_{\bj} \nabla_{\bk} \phi \\
           & - C |A| \sum G^{i\bi} - C |A|.
\end{aligned}
\end{equation}
Fixing $\gamma$ sufficiently small, we obtain 
 \begin{equation}
\label{S29}
\begin{aligned}
G^{i\bi} \fg_{i\bi}^2 + G^{i\bj} \nabla_i \nabla_{k} \phi \nabla_{\bj} \nabla_{\bk} \phi
     \leq \frac{C |A|^2}{\fg_{1\bar{1}}}   \sum G^{i\bi} + C |A|.
\end{aligned}
\end{equation}  

Applying Lemma~\ref{lemma-P10} and Lemma~\ref{lemma 3} we derive 
  \begin{equation}
 \label{S31}
\begin{aligned}
G^{i\bi} \fg_{i\bi}^2 + G^{i\bj} \nabla_i \nabla_{k} \phi \nabla_{\bj} \nabla_{\bk} \phi
\geq c_0 |A|^2 \sum  G^{i\bi}.         
 \end{aligned}
\end{equation}  
By the concavity of $f$,
\[  \begin{aligned}
    \sqrt{\fg_{1\bar1}}  \sum  f_{\Lambda_I}  
        = \,& \sqrt{\fg_{1\bar1}} \sum  f_{\Lambda_I}  
        - \sum  f_{\Lambda_I} \Lambda_I + \sum  f_{\Lambda_I} \Lambda_I \\
   \geq \,&  f (\sqrt{\fg_{1\bar1}} {\bf{1}}) - f (\Lambda (\fg)) 
        - \sqrt{\fg_{1\bar1}} \sum f_{\Lambda_I} 
       - \frac{4}{\sqrt{\fg_{1\bar1}}} \sum  f_{\Lambda_I} \Lambda_I^2.
  \end{aligned} \]
Therefore, when $\fg_{1\bar1}$ is sufficiently large we obtain by assumption~\eqref{S0.3},
\[  \sqrt{\fg_{1\bar1}}  \sum  f_{\Lambda_I} 
     + \frac{1}{\sqrt{\fg_{1\bar1}}} \sum  f_{\Lambda_I} \Lambda_I^2
   \geq f (\sqrt{\fg_{1\bar1}} {\bf{1}}) - \psi [\phi]
      \geq c_0 > 0. \]
Combining this with \eqref{S29} and \eqref{S31} we finally derive
 \begin{equation}
 \label{S32}
\begin{aligned}
|A|^{\frac{3}{2}} + |A|^2 \sum G^{i\bi}
     \leq   \frac{C |A|^2}{\sqrt{\fg_{1\bar{1}}}}  \sum G^{i\bi} + C |A|. 
\end{aligned}
\end{equation} 
This gives the upper bound $\fg_{1\bar{1}}\leq C$ depending on $|\phi|_{C^1 (M)}$.

\bigskip

\section{$C^1$ estimates and long time existence} 
\label{G}
\setcounter{equation}{0}

\medskip

Throughout this section let  $\phi \in C^{3,2} (M \times [0,T])$ be an admissible solution of \eqref{pde2} with initial data $\phi|_{t = 0} = \phi_0$. Denote
\[ H_0 = \min_M \frac{\partial \phi}{\partial t} \Big|_{t=0}, \;\; 
   H_1 = \max_M \frac{\partial \phi}{\partial t} \Big|_{t=0}.   \]
We shall assume $H_0 \leq 0 \leq H_1$; otherwise should be replaced by $0$ below. 
Let $\cL$ be the linearized operator at $\phi$.

In local coordinates $\cL$ is given as in \eqref{S181}. For convenience we use the 
following notations
\begin{equation}
\label{GG-E10}
   \begin{aligned}
    B_{\phi} := \,& F^{I\bJ} X_{I\bJ, \phi} + G^{i\bj} \chi_{i\bj, \phi} - \psi_{\phi} \\
    B_k := \,& F^{I\bJ} \nabla_k X_{I\bJ} 
                  + G^{i\bj} \nabla_k \chi_{i\bj} - \nabla_k \psi  \\
  B^{\alpha} := \,& F^{I\bJ} X_{I\bJ, \zeta_{\alpha}} + G^{i\bj} \chi_{i\bj, \zeta_{\alpha}} - \psi_{\zeta_{\alpha}}     
     \end{aligned}
\end{equation}
taking values at $\phi$.

By straightforward calculations as in \eqref{gg-C2-125'},
\begin{equation}
 \label{GG-E50}
 \begin{aligned}
 G^{i\bj} \bar \partial_j \partial_i \partial_t \phi
       = \,& G^{i\bj}  
                  \partial_t (\fg_{i\bj} - \chi_{i\bj} [\phi]) \\
      = \,& \sum F^{I\bar{J}} \partial_t (Z_{I\bJ} - X_{I\bJ}) 
                - G^{i\bj} \partial_t \chi_{i\bj} [\phi] \\
      = \,& \partial_t  (\partial_t \phi + \psi [\phi]) - F^{I\bar{J}} \partial_t X_{I\bJ} [\phi]
        - G^{i\bj} \partial_t \chi_{i\bj} [\phi]. 
  \end{aligned}
 \end{equation}
Therefore, 
\begin{equation}
 \label{GG-E60}
\cL \partial_t \phi - B_{\phi}
    \partial_t \phi = 0. 
 \end{equation}
By the maximum principle we immediately obtain the following estimates.

\begin{lemma}
\label{GG-E-lemma10}
Assume $B_{\phi} \leq K_0$
where $K_0 \geq 0$ is a constant. Then 
\begin{equation}
\label{GG-E30}
   H_0 e^{K_0 t}  \leq \frac{\partial \phi}{\partial t} 
      \leq H_1 e^{K_0 t} \;\;
         \mbox{in $M \times [0,T)$}
\end{equation}
and consequently, 
\begin{equation}
\label{GG-E40}
\min_M \phi_0 + \frac{H_0}{K_0} e^{K_0 t} \leq \phi 
             \leq \max_M \phi_0 + \frac{H_1}{K_0} e^{K_0 t} \;\; 
  \mbox{in $M \times [0,T)$}
\end{equation}
provided $K_0 > 0$; otherwise $|\phi|$ grows at most linearly in $t$. 
\end{lemma}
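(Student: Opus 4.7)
The plan is to exploit the linearized identity $\cL\partial_t\phi = B_\phi\,\partial_t\phi$ already established in \eqref{GG-E60}, which exhibits $u := \partial_t\phi$ as a solution of a linear degenerate parabolic equation whose zeroth-order coefficient is bounded above by $K_0$. I will apply the weak parabolic maximum principle to a rescaling of $u$ to obtain \eqref{GG-E30}, and then integrate in time to deduce \eqref{GG-E40}.

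First I would rescale by setting $v := e^{-K_0 t}\partial_t\phi$, so that a direct calculation starting from \eqref{GG-E60} gives
\[ \cL v \;=\; (B_\phi - K_0)\,v, \]
with $B_\phi - K_0 \leq 0$ by hypothesis. To avoid having to work with a merely non-positive zeroth-order coefficient, I would further perturb by a factor $e^{-\delta t}$ for small $\delta > 0$, setting $v_\delta := e^{-\delta t}v$; then $\cL v_\delta = (B_\phi - K_0 - \delta)v_\delta$ with strictly negative zeroth-order coefficient. Fix any $T_0 < T$ and let $(z^*,t^*)\in\bar M\times[0,T_0]$ be a point where $v_\delta$ attains its maximum. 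If $t^*>0$ then at $(z^*,t^*)$ the spatial first-order derivatives vanish, $-G^{i\bj}\bpartial_j\partial_i v_\delta \geq 0$ by the positive semi-definiteness of $\{G^{i\bj}\}$ established in Lemma~\ref{lemma-P20}, and $\partial_t v_\delta \geq 0$; hence $\cL v_\delta \geq 0$ there, forcing $(B_\phi-K_0-\delta)v_\delta \geq 0$ and therefore $v_\delta(z^*,t^*)\leq 0$. Consequently $\max_{\bar M\times[0,T_0]} v_\delta$ is attained at $t=0$, which gives $v_\delta \leq H_1$; letting $\delta\to 0^+$ yields the upper bound $\partial_t\phi \leq H_1 e^{K_0 t}$. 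The symmetric minimum argument, using $H_0\leq 0$, produces $\partial_t\phi \geq H_0 e^{K_0 t}$, completing \eqref{GG-E30}.

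Once \eqref{GG-E30} is in hand, the estimate \eqref{GG-E40} follows by integrating in time: $\phi(z,t) - \phi_0(z) = \int_0^t\partial_s\phi(z,s)\,ds$, into which the pointwise bounds on $\partial_t\phi$ may be inserted directly. When $K_0>0$ the integrals produce the factors $\tfrac{1}{K_0}(e^{K_0 t}-1)$, and using $e^{K_0 t}-1\leq e^{K_0 t}$ together with $\min_M\phi_0$, $\max_M\phi_0$ on the $t=0$ side gives \eqref{GG-E40}; when $K_0=0$ the same argument yields $H_0\leq\partial_t\phi\leq H_1$, which integrates to a linear growth bound $|\phi| \leq \max_M|\phi_0| + \max(|H_0|,|H_1|)\,t$.

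I do not expect any substantive obstacle. The analytic content sits entirely in the already-available identity \eqref{GG-E60} and the parabolicity provided by Lemma~\ref{lemma-P20}; the only minor care needed is the $\delta$-perturbation, which upgrades the weak inequality $B_\phi \leq K_0$ to a strictly negative zeroth-order coefficient so that the weak parabolic maximum principle closes cleanly on the compact manifold $M$ despite the degeneracy of $\{G^{i\bj}\}$.
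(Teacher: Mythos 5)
Your proposal is correct and follows essentially the same route as the paper: both use the identity \eqref{GG-E60} to see that $e^{-K_0 t}\partial_t\phi$ is a subsolution/supersolution of the linearized operator, apply the parabolic maximum principle on the compact manifold, and then integrate \eqref{GG-E30} in time to get \eqref{GG-E40}. Your $\delta$-perturbation (together with the standing assumption $H_0\leq 0\leq H_1$) is just a slightly more careful packaging of the paper's restriction to the sets $\{\partial_t\phi\geq 0\}$ and $\{\partial_t\phi\leq 0\}$.
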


\begin{proof}
By \eqref{GG-E60} and the assumption $B_{\phi} \leq K_0$,
\[ \cL (e^{- K_0 t} \partial_t \phi) = e^{- K_0 t } (\cL \partial_t \phi - K_0  \partial_t \phi) \leq 0\;\; \mbox{on $M \times [0, T] \cap \{\partial_t \phi \geq 0\}$}. \]
It follows from the maximum principle that 
\[ e^{- K_0 t} \partial_t \phi 
      \leq \max_{M  \times \{t = 0\}} e^{- K_0 t} \partial_t \phi = H_1. \]
Therefore, 
\[ \partial_t \phi  \leq  H_1  e^{K_0 t}. \]
Similarly, 
\[ \cL (e^{- K_0 t} \partial_t \phi) = e^{- K_0 t } (\cL \partial_t \phi - K_0  \partial_t \phi) \geq 0\;\; \mbox{on $M \cap \{\partial_t \phi \leq 0\}$} \]
which gives
\[ \partial_t \phi  \geq  H_0  e^{K_0 t}. \]
Clearly \eqref{GG-E40} follows from \eqref{GG-E30}. 
\end{proof}

To derive gradient estimates it is necessary to impose growth conditions on $X [\phi]$ and
$\psi [\phi]$ regarding their dependence on $\phi$ and $|\partial \phi|$. 
We shall assume 
\begin{equation}
\label{G0.1}
\left\{ \begin{aligned}
   & |D_{\zeta} X| + |D_{\zeta} \chi| 
        \leq \varrho_0 |\zeta| + \varrho_1 \\
    &  |D_{\zeta} \psi| 
       \leq  \varrho_0 f (|\zeta|^2 \mathbf{1})/|\zeta|, \\
  &  D_\phi X, D_{\phi} \chi 
          \leq (\varrho_0 |\zeta|^2 + \varrho_1) \omega, \\       
   & D_{\phi} \psi 
        \geq - \varrho_0 f(|\zeta|^2 \mathbf{1}) - \varrho_1  
        \end{aligned} \right.     
\end{equation}
at $(z, \phi, \zeta,\bar {\zeta})$, where $\varrho_1=\varrho_1(z,\phi)$ and 
 $\varrho_0 = \varrho_0 (z, \phi, |\zeta|) \to 0^+ \;\; \mbox{as $|\zeta| \to \infty$}.$
It follows that 
\begin{equation}
\label{G0.2}
\left\{ \begin{aligned}
   & X (z, \phi, \zeta,\bar {\zeta}), \chi (z, \phi, \zeta,\bar {\zeta}) \leq (\varrho_0 |\zeta|^2 + \varrho_2 (z,\phi)) \omega, \\
   &  \psi (z, \phi, \zeta,\bar {\zeta}) 
          \leq \varrho_0 f (|\zeta|^2 \mathbf{1}) + \varrho_2 
      \end{aligned} \right.   
\end{equation}
for some function $\varrho_2$.
We shall also assume  
\begin{equation}
\label{G0.5}
    |\nabla_z X| + |\nabla_z \chi|  \leq  |\zeta| \left(\varrho_0 |\zeta|^2+\varrho_1\right) 
    \;\; 
    |\nabla_z \psi| \leq |\zeta| \left(\varrho_0 f (|\zeta|^2 \mathbf{1}) 
    + \varrho_1\right).    
\end{equation}


\begin{theorem}
There exists uniform constant $C$ depending on $\sup{|\phi_t|}$ such that 
\begin{equation}
\label{G0}
    |\nabla \phi| \leq C  \Big(1 + \sup_{M \times [0, T]} {\phi} - \phi\Big).
\end{equation}
\end{theorem}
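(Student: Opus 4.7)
The plan is to apply the parabolic maximum principle to an auxiliary function of the form $W = \log|\nabla\phi|^2 + \varphi(\phi)$, where $\varphi$ will be chosen so that $\varphi'' - (\varphi')^2$ is suitably positive on the range of $\phi$ and calibrated so that controlling $W$ at its maximum yields the linear bound~\eqref{G0}. If $W$ attains its maximum at $t = 0$ the bound is immediate from the $C^1$ bound on $\phi_0$; otherwise at an interior maximum $(z_0,t_0)$ we have $\partial_i W = 0$ and $\cL W \geq 0$. I will work in coordinates at $z_0$ with $g_{i\bj} = \delta_{ij}$ and $T^k_{ij} = 2\Gamma^k_{ij}$, as in Section~\ref{S}.

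The main computation is the evaluation of $\cL u$ for $u := |\nabla\phi|^2$. Direct differentiation gives $\partial_t u = 2\mathrm{Re}\{\phi_{\bar k}\partial_k\phi_t\}$ and
\[ G^{i\bj}\partial_{\bj}\partial_i u = 2\mathrm{Re}\{G^{i\bj}\phi_{\bar k}\nabla_{\bj}\nabla_i\nabla_k\phi\} + G^{i\bj}(\nabla_i\nabla_k\phi\cdot\nabla_{\bj}\nabla_{\bk}\phi + \nabla_{\bj}\nabla_k\phi\cdot\nabla_i\nabla_{\bk}\phi). \]
Commuting Chern-connection derivatives yields $\nabla_{\bj}\nabla_i\nabla_k\phi = \nabla_k(\fg_{i\bj} - \chi_{i\bj})$ modulo curvature and torsion terms that contribute at most $Cu\sum G^{i\bi}$, while the once-differentiated equation~\eqref{gg-C2-130} reads $G^{i\bj}\nabla_k\fg_{i\bj} = \partial_k\phi_t + \partial_k\psi[\phi] - F^{I\bJ}\nabla_k X_{I\bJ}$. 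Expanding $\partial_k\psi$, $\nabla_k X_{I\bJ}$, $\nabla_k\chi_{i\bj}$ by the chain rule in $z$, $\phi$, $\zeta$, $\bar\zeta$, and using \eqref{GG-E10}, the pieces proportional to $\nabla_\alpha\nabla_k\phi$ and $\nabla_{\bar\alpha}\nabla_k\phi$ combine exactly into $-B^\alpha\nabla_\alpha\nabla_k\phi - \bar B^\alpha\nabla_{\bar\alpha}\nabla_k\phi$. When contracted against $\phi_{\bar k}$ (and adding the conjugate), these cancel in pairs against the drift $-B^\alpha\partial_\alpha u - \bar B^\alpha\partial_{\bar\alpha} u$ of $\cL$, while the $\partial_k\phi_t$ pieces cancel against $\partial_t u$. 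What remains is
\[ \cL u = 2 u B_\phi - 2\mathrm{Re}\{\phi_{\bar k}(\psi_{z_k} - F^{I\bJ} X_{I\bJ, z_k} - G^{i\bj}\chi_{i\bj, z_k})\} - G^{i\bj}(\nabla_i\nabla_k\phi\cdot\nabla_{\bj}\nabla_{\bk}\phi + \nabla_{\bj}\nabla_k\phi\cdot\nabla_i\nabla_{\bk}\phi) + E, \]
with $|E| \leq Cu\sum G^{i\bi}$.

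The second step is to bound the residuals. Combining \eqref{G0.1a}--\eqref{G0.5a} with the consequence of \eqref{S0.1} and the concavity of $f$ that $f(u\mathbf{1}) \leq C + (C_0 + u)\sum_I f_I$ gives $\cL u/u \leq C(\varrho_0 u + \varrho_1)(1 + \sum_I f_I) + C$. Next, $\cL\varphi(\phi) = \varphi'\cL\phi - \varphi''(\phi) G^{i\bj}\phi_i\phi_{\bar j}$ provides the crucial negative term $-\varphi''G^{i\bj}\phi_i\phi_{\bar j}$, and the critical condition $\partial_i W = 0$ converts the cross term $G^{i\bj}\partial_i u\,\partial_{\bj} u/u^2$ from $\cL\log u$ into $(\varphi')^2 G^{i\bj}\phi_i\phi_{\bar j}$. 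The essential coercive input is $G^{i\bj}\phi_i\phi_{\bar j} \geq c_0 u\sum_I f_I$, which follows from Lemma~\ref{lemma-P10} with $\alpha = pN/n$ combined with Lemma~\ref{lemma 3} under the rank hypothesis~\eqref{S0.2}.

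Assembling $\cL W \geq 0$ with these estimates produces an inequality of the schematic form
\[ 0 \leq C(\varrho_0 u + 1)\Bigl(1 + \sum_I f_I\Bigr) - c_0(\varphi'' - (\varphi')^2)\, u\sum_I f_I + C|\varphi'|, \]
and since $\varrho_0 \to 0^+$ as $u \to \infty$, choosing $\varphi$ with $\varphi'' - (\varphi')^2 \geq c > 0$ on the range of $\phi$ forces $u(z_0,t_0)$ to be bounded once $\varphi$ is calibrated to produce the linear factor $1+\sup\phi-\phi$ on the right of \eqref{G0}. The main obstacle is Step~1: the precise bookkeeping by which the drift of $\cL$ absorbs the second-derivative cross terms generated by the gradient dependence of $X[\phi]$, $\chi[\phi]$ and $\psi[\phi]$. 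Once this cancellation is in hand, no uncontrollable second-derivative term survives in $\cL u$, and the coercive lower bound from Lemmas~\ref{lemma-P10} and~\ref{lemma 3}, together with the smallness $\varrho_0 \to 0$ built into \eqref{G0.1a}--\eqref{G0.5a}, closes the argument.
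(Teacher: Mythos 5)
Your skeleton matches the paper's in most respects: the maximum principle applied to a gradient quantity weighted by a function of $\phi$, the absorption of the second-derivative cross terms coming from the gradient dependence of $X$, $\chi$, $\psi$ into the drift $B^{\alpha}$ of $\cL$ (this is exactly \eqref{GG-E10} and \eqref{G4}), the use of the growth conditions \eqref{G0.1}, \eqref{G0.5}, the concavity step as in \eqref{G11}, and the coercive input $G^{i\bar{j}}\phi_i\phi_{\bar{j}}\geq c_0|\nabla\phi|^2\sum G^{i\bar{i}}$ from Lemma~\ref{lemma-P10} and Lemma~\ref{lemma 3} under \eqref{S0.2}. The genuine gap is in where your positive term comes from. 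You rely on choosing $\varphi$ with $\varphi''-(\varphi')^2\geq c>0$ on the range of $\phi$. First, such a $\varphi$ can only exist if the range of $\phi$ has length less than $\pi/\sqrt{c}$ (set $v=e^{-\varphi}>0$; then $v''\leq -cv$, and Sturm comparison forces $v$ to vanish on any longer interval), so your constant unavoidably depends on $\sup\phi-\inf\phi$ over $M\times[0,T]$. No such bound is available at this stage: the oscillation bound is Corollary~\ref{cor1}, which is \emph{deduced from} \eqref{G0}, and the entire point of the factor $1+\sup\phi-\phi$ is that the constant in \eqref{G0} is independent of the oscillation. As written your argument is circular, or proves a weaker statement than the theorem. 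Second, the ``calibration'' that would produce the factor $1+\sup\phi-\phi$ is precisely $\varphi=-\log(1+\sup\phi-\phi)$, and for this choice $\varphi''-(\varphi')^2\equiv 0$, so your crucial negative term vanishes; variants such as $\varphi=-\theta\log(1+\sup\phi-\phi)$ with $0<\theta<1$ give only $\varphi''-(\varphi')^2=\theta(1-\theta)(1+\sup\phi-\phi)^{-2}$, which degenerates as the (uncontrolled) weight grows and cannot absorb the terms of size $\varrho_0|\nabla\phi|^2\sum G^{i\bar{i}}$, since the smallness $\varrho_0\to 0$ is in $|\nabla\phi|$, not in the weight.

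The paper works with exactly this borderline weight, $\eta^{-1}|\nabla\phi|^2$ with $\eta=1+\sup_{M\times[0,T)}\phi-\phi$, and gets the good term not from $\varphi''-(\varphi')^2$ but from the squared-Hessian term $G^{i\bar{j}}S_{i\bar{j}}$ in $\cL|\nabla\phi|^2$: the critical-point identity \eqref{G1} together with the Schwarz inequality gives \eqref{G7}, namely $G^{i\bar{j}}S_{i\bar{j}}\geq \eta^{-2}|\nabla\phi|^2\,G^{i\bar{j}}\partial_i\phi\,\partial_{\bar{j}}\phi$, and this, combined with \eqref{G8}, \eqref{G11} and Lemmas~\ref{lemma-P10}, \ref{lemma 3}, closes to \eqref{G12} with no reference to the oscillation of $\phi$. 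In your computation you do write the term $-G^{i\bar{j}}S_{i\bar{j}}$ in $\cL|\nabla\phi|^2$, but you then discard it in the final assembly; retaining it and converting it via the critical equation (equivalently, via $\partial_i u/u=-\varphi'\partial_i\phi$ at the maximum) is the missing mechanism that makes the weight $\eta^{-1}$ work and yields \eqref{G0} with a constant depending only on $\sup|\phi_t|$ and the structural data.
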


\begin{proof}
We assume without loss of generality that 
\[ \sup\limits_{\partial \Gamma} f \leq 0 < \psi. \]
Let 
\[  \eta =1 + \sup\limits_{M\times [0,T)} \phi - \phi. \]
and suppose that the function $\eta^{-1} |\nabla \phi|^2$ attains its maximum 
at $(z_0,t_0) \in M \times (0,T]$ where $|\nabla \phi|\geq 1$. 
We choose local coordinates as before such that $g_{i\bar j}=\delta_{i\bar j}$, $T^k_{ij}=2\Gamma^k_{ij}$ and $\fg_{i\bj}$ are diagonal at $(z_0, t_0)$ where

\begin{equation}
\label{G1}
    \begin{aligned}
 \frac{\partial_i |\nabla \phi|^2}{|\nabla \phi|^2} + \frac{\partial_i \phi}{\eta} = 0, \;\; 
  \frac{\partial_\bi |\nabla \phi|^2}{|\nabla \phi|^2} + \frac{\partial_\bi \phi}{\eta} = 0,
    \end{aligned}
\end{equation}
\begin{equation}
\label{G2}
\begin{aligned}
\frac{\cL  |\nabla \phi|^2}{|\nabla \phi|^2}
+  \frac{\cL \phi}{\eta} \geq 0.
\end{aligned}
\end{equation}

Denote
\[ S_{i\bj} = \nabla_i \nabla_\bk \phi \nabla_k \nabla_\bj \phi 
                + \nabla_i \nabla_k \phi \nabla_\bj \nabla_\bk \phi.\]
By \eqref{G1} and Schwarz inequality,
\begin{equation}
\label{G7}
    \begin{aligned}
G^{i\bj} S_{i\bj} \geq \frac{1}{|\nabla \phi|^2} G^{i\bj} \partial_{i} |\nabla \phi|^2 \partial_\bj |\nabla \phi|^2
 = \frac{|\nabla \phi|^2}{\eta^2} G^{i\bj} \partial_i \phi \partial_\bj \phi   
       \end{aligned}
\end{equation}

We calculate 
\begin{equation}
\label{G4}
\begin{aligned}
   \cL  |\nabla \phi|^2 
          = \,& 2 \fRe \{\nabla_\bk \phi \cL \nabla_k \phi\} - G^{i\bj} S_{i\bj} \\
     \leq  \,&  2 \fRe\{B_k \nabla_\bk \phi 
       + B^{\alpha} \Gamma_{\alpha k}^l \nabla_l \phi \nabla_{\bk} \phi\} 
       +  2 B_{\phi} |\nabla \phi|^2 \\
     & - (1 - \gamma) G^{i\bj} S_{i\bj} + C |\nabla \phi|^2 \sum G^{i\bi} \\
    \leq  \,&  |\nabla \phi|^2 (C R - 2 c_1 G^{i\bj} \partial_i \phi \partial_\bj \phi)   
        \end{aligned}
\end{equation}
by assumptions~\eqref{G0.1} and \eqref{G0.5}, where $0< \gamma < 1$ and 
\[ R = (1 + \varrho_0 |\nabla \phi|^2) \sum G^{i\bi}
              + 1 + \varrho_0 f (|\nabla \phi|^2 \bf{1}). \]
Similarly, 
\begin{equation}
\label{G8}
\begin{aligned}
  \cL \phi = \,& \partial_t \phi - G^{i\bj} \fg_{i\bj} + G^{i\bj} \chi_{i\bj}
                        + 2 \fRe\{B^{\alpha} \partial_{\alpha} \phi\}        \\
               = \,& \partial_t \phi - F^{I\bJ} Z_{I\bJ} + F^{I\bJ} X_{I\bJ} + G^{i\bj} \chi_{i\bj}
                        + 2 \fRe\{B^{\alpha} \partial_{\alpha} \phi\}        \\ 
           \leq \,& \partial_t \phi - F^{I\bJ} Z_{I\bJ} + C R.                                   
\end{aligned}
\end{equation}
By concavity of $f$ as in the previous section, 
\begin{equation}
\label{G11}
    \begin{aligned}
 |\nabla\phi|^2 \sum G^{i\bi} 
        \geq \,& f (|\nabla\phi|^2 \mathbf{1}) - C \sum G^{i\bi} - C.
    \end{aligned}
\end{equation}

Finally, applying Lemma~\ref{lemma-P10} and Lemma~\ref{lemma 3}, 
by \eqref{G2} and \eqref{G4}-\eqref{G11} we derive
\begin{equation}
\label{G12}
f (|\nabla\phi|^2 \mathbf{1}) \Big(\sum G^{i\bi} + 1\Big) \leq C R
\end{equation}
which gives a bound $|\nabla\phi|^2 \leq C$.
\end{proof}

As a consequence we obtain a bound  for the oscillation of $\phi$.

\begin{corollary}
\label{cor1}
For $\phi$ as above,
$$\left|\left(1+\sup{\phi}-\phi(x,t)\right)^{\frac{1}{2}}-\left(1+\sup{\phi}-\phi(y,s)\right)^{\frac{1}{2}}\right|\leq C d$$
for any $(x,t)$, $(y,s)$ in $M\times [0,T)$, where $d$ is the diameter of $M$. 
In particular,
\begin{equation}
\label{G16}
    \sup{\phi}-\inf{\phi}\leq C\max\{d,d^2\}.
\end{equation}
\end{corollary}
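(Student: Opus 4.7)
The plan is to turn the gradient bound of the preceding theorem into a genuine Lipschitz estimate for the auxiliary function
\[ v(x,t) := \bigl(1+\sup_{M\times[0,T)}\phi-\phi(x,t)\bigr)^{1/2}, \]
which satisfies $v\geq 1$, and then to deduce \eqref{G16} by comparing $v$ at extremal space--time points. The key observation is that passing to $v$ via the square root exactly absorbs the linear-in-$\phi$ factor in the gradient estimate, producing a uniform bound on $|\nabla v|$.

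I would first compute the identities
\[ \nabla v = -\frac{\nabla\phi}{2v}, \qquad \partial_t v = -\frac{\partial_t\phi}{2v}. \]
The argument in the proof of the preceding theorem (the test function $\eta^{-1}|\nabla\phi|^2$, evaluated at its space--time maximum) actually yields the squared form $|\nabla\phi|^2\leq C(1+\sup\phi-\phi)=Cv^{2}$ pointwise on $M\times[0,T)$, so that
\[ |\nabla v|=\frac{|\nabla\phi|}{2v}\leq \frac{\sqrt{C}}{2}. \]
Moreover, Lemma~\ref{GG-E-lemma10} bounds $|\partial_t\phi|$ uniformly on $M\times[0,T]$ in terms of $\sup|\phi_t|$, and combined with $v\geq 1$ this gives $|\partial_t v|\leq C_1$ for a constant $C_1$ depending on that bound.

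For any two points $(x,t),(y,s)\in M\times[0,T)$, let $\gamma:[0,L]\to M$ be a minimizing geodesic from $x$ to $y$ of length $L\leq d$. The decomposition
\[ v(x,t)-v(y,s)=\bigl(v(x,t)-v(x,s)\bigr)+\bigl(v(x,s)-v(y,s)\bigr), \]
together with integration of $\partial_t v$ in time on $\{x\}\times[s,t]$ and of $\nabla v\cdot\dot\gamma$ along $\gamma$, gives
\[ |v(x,t)-v(y,s)|\leq C_1|t-s|+\tfrac{1}{2}\sqrt{C}\,d\leq C'\,d, \]
where $C'$ absorbs the $T$-dependent contribution since $d>0$ is a fixed geometric constant of $M$. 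This is the first inequality of the corollary.

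The bound \eqref{G16} is then immediate: choose $(x,t)$ attaining $\inf\phi$ and $(y,s)$ attaining $\sup\phi$ on $M\times[0,T)$, so that $v(y,s)=1$ and $v(x,t)^{2}=1+\sup\phi-\inf\phi$. Substituting into the H\"older estimate gives $\sqrt{1+\sup\phi-\inf\phi}\leq 1+C'd$, and squaring yields $\sup\phi-\inf\phi\leq 2C'd+(C')^{2}d^{2}\leq C''\max\{d,d^{2}\}$. The only subtle point in the whole argument is reading off the correct pointwise squared form of the gradient estimate from the proof of the preceding theorem; once that is noted, every remaining step is a direct integration along curves combined with the triangle inequality.
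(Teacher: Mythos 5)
Your key observation is the right one, and it is the substance of the corollary: the proof of the preceding theorem (maximum of $\eta^{-1}|\nabla\phi|^2$ with $\eta=1+\sup\phi-\phi\geq 1$) really yields the pointwise bound $|\nabla\phi|^2\leq C(1+\sup\phi-\phi)$, i.e. $|\nabla v|\leq \tfrac12\sqrt{C}$ for $v=(1+\sup\phi-\phi)^{1/2}$, and the linear form \eqref{G0} as printed would only give an exponential (not $\max\{d,d^2\}$) oscillation bound. Integrating $\nabla v$ along a minimizing geodesic then gives $|v(x,t)-v(y,t)|\leq Cd$ for equal times, with $C$ depending only on $\sup|\phi_t|$ and the geometry. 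Since the paper supplies no separate proof of the corollary, this part of your argument is exactly the intended one.

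The gap is in your treatment of $t\neq s$. The time term $C_1|t-s|$ can be as large as $C_1T$, and writing $C_1|t-s|+\tfrac12\sqrt{C}\,d\leq C'd$ "absorbing the $T$-dependent contribution" simply makes $C'$ depend on $T$ (indeed $C'\gtrsim C_1T/d$). With a $T$-dependent constant the corollary is vacuous — Lemma~\ref{GG-E-lemma10} already gives $|\phi|\leq |\phi_0|+Ct$ — and it is useless for Section~\ref{H}, where one needs bounds on $\tilde\phi$ that are uniform in $t$ in order to run the Harnack/iteration argument on every interval $[k,k+1]$. Moreover, no argument can repair this for genuinely different times: unnormalized solutions generically drift, $\phi(\cdot,t)\approx \varphi+bt$ with $b\neq 0$, so $(1+\sup\phi-\phi(x,t))^{1/2}$ grows like $\sqrt{|b|\,t}$ while it remains of order $1$ near the time where the space-time supremum is attained; hence a cross-time bound with a uniform constant is false in general. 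Consequently your derivation of \eqref{G16}, which picks $(y,s)$ at the space-time supremum and an $(x,t)$ at the infimum at a possibly different time, does not go through with a uniform constant either: the estimate (and \eqref{G16}) should be read slice-wise in $t$, i.e. with $t=s$ and the supremum entering $v$ taken on the corresponding time slice, which is all that the convergence argument requires. As written, your proof establishes the equal-time case correctly but does not prove the statement for $t\neq s$, and the step that claims to is not a valid estimate.
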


From the estimates in this and the previous section we obtain the following existence result, proving the first part of Theorem~\ref{theorem-I1}. 

\begin{theorem}
\label{theorem-I11}
Let $\phi_0 \in C^{\infty} (M)$ be an admissible function with $c_0 [\phi_0] > 0$.
Assume that \eqref{P3}, \eqref{P4}, \eqref{S0.2}-\eqref{S0.1} 
\eqref{G0.1}, \eqref{G0.5} hold and $B_{\phi} \leq 0$ in \eqref{GG-E10}. 
There exists an admissible solution $\phi \in C^{\infty} (M \times \{t > 0\})$ 
of problem~\eqref{I1}-\eqref{gg-I20} for $\Omega = X [\phi]$.
\end{theorem}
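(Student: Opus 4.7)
The plan is to extend the short-time solution to all of $M \times [0,\infty)$ by combining a standard short-time existence result with uniform a priori estimates on every finite interval $[0,T]$. Since $\phi_0$ is admissible with $c_0[\phi_0]>0$, the equation~\eqref{pde2} is strictly (nondegenerately) parabolic near $t=0$, so the implicit function theorem in parabolic H\"older spaces (applied to the linearization at $\phi_0$, whose invertibility follows from Lemma~\ref{lemma-P20} and standard linear parabolic theory on Hermitian manifolds) produces a smooth admissible solution $\phi$ on a maximal time interval $[0,T^*)$ with $T^*\in(0,\infty]$. The goal is to rule out $T^*<\infty$.

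Suppose $T^*<\infty$. First I would establish the $C^0$ estimate. Because $B_\phi \leq 0$ in \eqref{GG-E10}, Lemma~\ref{GG-E-lemma10} applied with $K_0=0$ yields $H_0 \leq \partial_t\phi \leq H_1$ on $M \times [0,T^*)$, hence a bound $|\phi|\leq C(1+t)$. Moreover this pointwise bound on $\phi_t$, combined with equation~\eqref{pde2}, keeps $\psi[\phi] = f(\Lambda(\Omega_\phi))-\phi_t$ bounded above by a constant strictly less than $\sup_\Gamma f$, so that condition~\eqref{S0.3} (with $c_0[\phi]\geq c_0[\phi_0]/2$, say) persists uniformly on $[0,T^*)$. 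Next, the gradient estimate of Section~\ref{G}, which uses \eqref{G0.1}-\eqref{G0.5}, gives $|\nabla\phi|\leq C(1+\sup\phi-\phi)$; Corollary~\ref{cor1} then bounds the oscillation of $\phi$, and together with the $C^0$ bound on $\phi$ this produces a uniform bound $|\nabla\phi|\leq C$ on $M\times[0,T^*)$.

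With $C^1$ control in hand, Theorem~\ref{theorem-I20} supplies the second-order bound $|\partial\bar\partial\phi|_\omega\leq C$. Consequently $\Lambda(\Omega_\phi)$ stays in a compact subset of $\Gamma$ (bounded away from $\partial\Gamma$ by the persistence of~\eqref{S0.3}), so the equation is uniformly parabolic with bounded, concave structure function on $[0,T^*)$. From here the Evans--Krylov theorem for concave fully nonlinear parabolic equations yields a uniform $C^{2+\alpha,1+\alpha/2}$ estimate. A standard bootstrap through the linearized equation (differentiate \eqref{pde2} in the spatial variables, apply parabolic Schauder theory, and iterate) then gives uniform $C^k$ bounds for every $k$ on $M\times[0,T^*)$. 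Passing to the limit $t\to T^*$ produces a smooth admissible datum at time $T^*$, and reapplying short-time existence extends $\phi$ past $T^*$, contradicting maximality. Hence $T^*=\infty$.

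The main obstacle I anticipate is controlling admissibility and the positive lower bound $c_0[\phi]>0$ throughout $[0,T^*)$, because both the second-order estimate (Theorem~\ref{theorem-I20}) and the gradient estimate (via Lemma~\ref{lemma 3} through the rank condition~\eqref{S0.2}) rely on staying a definite distance from $\partial\Gamma$ and on the concavity-plus-superlinearity of $f$ encoded in the computation $\sqrt{\fg_{1\bar 1}}\sum f_{\Lambda_I}+\tfrac{1}{\sqrt{\fg_{1\bar 1}}}\sum f_{\Lambda_I}\Lambda_I^2\geq c_0$. The assumption $B_\phi\leq 0$ is precisely what makes the $\phi_t$-bound constant-in-time, thereby locking in both the strict parabolicity and the validity of \eqref{S0.3} uniformly, so the a priori estimates of Sections~\ref{S} and \ref{G} apply with time-independent constants; verifying this interplay carefully, together with the Schauder bootstrap in the presence of the nonlinear dependence of $X[\phi]$, $\chi[\phi]$, $\psi[\phi]$ on $\phi$ and $\nabla\phi$, is where the technical work lies.
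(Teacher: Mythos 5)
Your overall strategy is exactly the one the paper intends: the paper's ``proof'' of this theorem is the single remark that it follows from the estimates of Sections~\ref{S} and \ref{G}, and your proposal fills in the standard machinery (short-time existence from Lemma~\ref{lemma-P20}, the $\phi_t$ and $C^0$ bounds from Lemma~\ref{GG-E-lemma10} with $K_0=0$, the gradient estimate, Theorem~\ref{theorem-I20}, Evans--Krylov, Schauder bootstrap, and continuation past a finite maximal time).

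There is, however, one step in your write-up that does not hold as stated: the claim that the bound on $\phi_t$ together with equation~\eqref{pde2} keeps $\psi[\phi]=f(\Lambda(\Omega_\phi))-\phi_t$ bounded above by a constant strictly less than $\sup_\Gamma f$, so that \eqref{S0.3} persists with $c_0[\phi]\geq c_0[\phi_0]/2$. From Lemma~\ref{GG-E-lemma10} (with $B_\phi\leq 0$, $K_0=0$) you only get $\phi_t\geq H_0$, and $H_0=\min_M\phi_t|_{t=0}$ may be negative; then $\psi[\phi]\leq f(\Lambda(\Omega_\phi))-H_0<\sup_\Gamma f+|H_0|$, which is weaker than what you need. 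Nor does a uniform gradient bound help by itself, since the hypotheses only give $\sup_M\psi[\phi_0]<\sup_\Gamma f$ at $t=0$, not a bound on $\psi(z,\zeta,\bar\zeta)$ by $\sup_\Gamma f$ over all $|\zeta|\leq C$. So the persistence of the strict inequality in \eqref{S0.3}, on which the constant in Theorem~\ref{theorem-I20} depends, is not yet justified by your argument. Note that this issue is vacuous in the unbounded case $\sup_\Gamma f=+\infty$ (e.g.\ $f=\sigma_k^{1/k}$, as in Theorem~\ref{theorem-I1k}), and the paper itself passes over the bounded case silently (Theorem~\ref{theorem-I20} simply takes $c_0[\phi]>0$ as part of its data while Theorem~\ref{theorem-I1} assumes it only for $\phi_0$); but since you singled this point out as the crux, you should either restrict to $\sup_\Gamma f=+\infty$, add a hypothesis controlling $\psi$ for bounded gradients, or supply a genuine propagation argument in place of the inequality $\psi[\phi]\leq f(\Lambda)-\phi_t<\sup_\Gamma f$, which is not valid when $H_0<0$.
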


\bigskip

\section{Harnack inequality and uniform convergence of solution}
\label{H}
\setcounter{equation}{0}

\medskip

In this section we prove the second part of Theorem~\ref{theorem-I1}. For this purpose, we extend Li-Yau Harnack inequality from ~\cite{LY86}, and \cite{Gill11} to equations with lower order terms. This follows the argument in \cite{George21}, but we give a self-contained proof here. 

We consider the following uniformly parabolic equation locally given by
\begin{equation}
    \label{H1}
    \begin{aligned}
    \frac{\partial u}{\partial t} =  G^{i\bar j} \partial_i \partial_{\bar j} u 
        +\chi^i \partial_i u  + \bar \chi^i \partial_{\bi} u + \chi^0 u
    \end{aligned}
\end{equation}
where 
$\chi^i, \chi^{0} \in C^{3,1} (M \times [0, T))$ and 
$G^{i\bj} \in C^{3,1} (M \times [0, T))$ with 
 \begin{equation}
    \label{H1.5}
  0 < \lambda g^{i\bar j} \xi_i \xi_{\bar j} \leq G^{i\bar j} \xi_i \xi_{\bar j} \leq \Lambda g^{i\bar j} \xi_i \xi_{\bar j}, \;\; \forall \, \xi \in T^{1,0} M
 \; \mbox{on $M \times [0, T)$.} 
 \end{equation} 
 
\begin{lemma}
\label{lemma2.1}
Let $u \in C^{3,2} (M\times [0,T))$ be a positive solution of \eqref{H1}, $\alpha > 1$.
There are constants $C_1$ and $C_2$ such that 
\begin{equation}
\label{H2}
\begin{aligned}
\frac{1}{u^2} G^{i\bar j} \partial_i u \partial_{\bar j} u - \frac{\alpha \partial_t u}{u} 
     \leq C_1 + \frac{C_2}{t}, \;\; 0 < t < T. 
\end{aligned}
\end{equation}
\end{lemma}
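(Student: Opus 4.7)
The strategy is the classical Li--Yau argument adapted to Hermitian geometry and to the presence of the lower order terms $\chi^i,\chi^0$. Let $f = \log u$, which is well defined by positivity of $u$. From \eqref{H1} a direct computation yields
\begin{equation*}
\partial_t f = G^{i\bj}\partial_i\partial_{\bj}f + G^{i\bj}\partial_i f\,\partial_{\bj}f + \chi^i \partial_i f+\bar\chi^i\partial_{\bi} f+\chi^0,
\end{equation*}
so if $\cL := \partial_t - G^{i\bj}\partial_i\partial_{\bj}$ and $|\nabla f|^2_G := G^{i\bj}\partial_i f\,\partial_{\bj} f$, then $\cL f = |\nabla f|^2_G + \chi^i\partial_i f+\bar\chi^i\partial_{\bi}f+\chi^0$.

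Introduce the Li--Yau quantity $Q := |\nabla f|^2_G - \alpha\,\partial_t f$ and the auxiliary $\Phi := tQ$. If $\Phi\le 0$ on $M\times[0,T)$, the estimate \eqref{H2} is trivial; otherwise $\Phi$ attains a positive maximum at some $(z_0, t_0)$ with $t_0>0$, and the parabolic maximum principle gives $\nabla Q = 0$ and $\cL Q \ge -Q/t$ there. Choose coordinates at $z_0$ as in Section~\ref{S} so that $g_{i\bj}=\delta_{ij}$ and $T^k_{ij}=2\Gamma^k_{ij}$. A Hermitian Bochner-type calculation for the Chern connection gives, schematically,
\begin{equation*}
\cL|\nabla f|^2_G \le -G^{i\bj}G^{k\bl}\bigl(\nabla_i\nabla_k f\,\nabla_{\bj}\nabla_{\bl}f+\nabla_i\nabla_{\bl}f\,\nabla_{\bj}\nabla_k f\bigr)+2\,\fRe\{G^{i\bj}\partial_{\bj}f\cdot\nabla_i\cL f\}+C|\nabla f|^2_G,
\end{equation*}
while $\cL\partial_t f = \partial_t\cL f - (\partial_t G^{i\bj})\partial_i\partial_{\bj}f$ follows by differentiating \eqref{H1} in $t$.

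The next step is to substitute $\cL f = |\nabla f|^2_G+\chi^i\partial_i f+\bar\chi^i\partial_{\bi}f+\chi^0$ and use the critical-point relation $\nabla|\nabla f|^2_G = \alpha\nabla\partial_t f$ (from $\nabla Q=0$) to convert each cross term $G^{i\bj}\partial_if\,\partial_{\bj}\partial_t f$ into $\alpha^{-1}G^{i\bj}\partial_i f\,\partial_{\bj}|\nabla f|^2_G$, which is then controlled by Cauchy--Schwarz against the Hessian-squared term produced by the Bochner formula. After collecting everything, the inequality $\cL Q \ge -Q/t$ at $(z_0,t_0)$ reduces to a pointwise relation of the form
\begin{equation*}
\frac{Q}{t} + \frac{c}{n}\frac{Q^2}{|\nabla f|^2_G}\le C_1|\nabla f|^2_G+C_2\,|Q| + C_3,
\end{equation*}
which, treated as a quadratic in $Q/|\nabla f|^2_G$, yields $Q\le C_1'+C_2'/t$ at the maximum, and therefore throughout $M\times[0,T)$, which is precisely \eqref{H2}.

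The main obstacle is the bookkeeping of the extra terms produced by the non-K\"ahler structure and by the lower order coefficients. The Chern torsion together with $\nabla G$ produce terms of the form $|\nabla G|\cdot|\nabla^2 f|\cdot|\nabla f|_G$, and differentiating the $\chi^i$ and $\chi^0$ terms in $\cL f$ yields expressions linear in the complex Hessian and at most quadratic in $|\nabla f|_G$. Each such term must be split by Cauchy--Schwarz into a small multiple of the Hessian-squared quantity $G^{i\bj}G^{k\bl}\nabla_i\nabla_k f\,\nabla_{\bj}\nabla_{\bl}f$ (which is available for absorption precisely because $\alpha>1$ leaves a strictly positive coefficient in front of it after completing the square) plus a polynomial in $|\nabla f|_G$ that is harmless by the uniform ellipticity \eqref{H1.5} and the smoothness of the coefficients. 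The $C_2/t$ term in \eqref{H2} is then the standard $t\to 0^+$ blow-up from $\Phi=tQ$, as in \cite{LY86} and \cite{Gill11}.
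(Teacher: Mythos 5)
Your plan follows the same route as the paper's proof: set $h=\log u$, apply the parabolic maximum principle to $\tilde Q = t\,Q$ with $Q=G^{i\bar j}\partial_i h\,\partial_{\bar j}h-\alpha\,\partial_t h$, use the critical-point relations $\partial_k\big(G^{i\bar j}\partial_ih\,\partial_{\bar j}h\big)=\alpha\,\partial_k\partial_t h$ to convert mixed space--time derivatives into spatial ones, and absorb the torsion, $\partial G$, $\chi^i$, $\chi^0$ contributions by Cauchy--Schwarz into the Hessian-squared term; this is exactly the scheme of \eqref{H6}--\eqref{H15}. The genuine problem is in how you close the argument. (A side remark: the Hessian-squared term comes with coefficient one from the direct computation of $G^{i\bar j}Q_{i\bar j}$, the analogue of the Bochner identity; it is not ``available precisely because $\alpha>1$''.) Your final displayed reduction
\[ \frac{Q}{t}+\frac{c}{n}\,\frac{Q^2}{G^{i\bar j}\partial_ih\,\partial_{\bar j}h}\le C_1\,G^{i\bar j}\partial_ih\,\partial_{\bar j}h+C_2|Q|+C_3 \]
is neither what the computation produces nor sufficient: treated as a quadratic in the ratio $Q/G^{i\bar j}\partial_ih\,\partial_{\bar j}h$ it only bounds that ratio, not $Q$, and the term $C_1\,G^{i\bar j}\partial_ih\,\partial_{\bar j}h$ on the right is never absorbed. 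Indeed $Q\sim G^{i\bar j}\partial_ih\,\partial_{\bar j}h\to\infty$ at a fixed time is compatible with this inequality, so it cannot imply \eqref{H2}.

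What actually closes the argument is the trace of the equation: from \eqref{H1} one has $G^{i\bar j}h_{i\bar j}=(1-\alpha)\partial_th-Q-\chi^ih_i-\bar\chi^ih_{\bar i}-\chi^0$ (the paper's \eqref{H5}), so the Hessian-squared term dominates, via $|\partial\bar\partial h|^2\ge\frac{1}{n\Lambda^2}\big(G^{i\bar j}h_{i\bar j}\big)^2$, a full square $c_0\big(Q+(\alpha-1)\partial_t h\big)^2$ up to terms linear in $Q$ and $\partial_t h$ (the paper's \eqref{H16}); equivalently, in Li--Yau form, a positive multiple of $\big(\tfrac{1}{\alpha}Q+(1-\tfrac{1}{\alpha})G^{i\bar j}\partial_ih\,\partial_{\bar j}h\big)^2$, whose expansion supplies both the $Q^2$ term and the quartic gradient term needed to absorb the remaining $C\,t\,|\partial h|^2$-type errors when $Q>0$; this is where $\alpha>1$ is really used. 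One must also still handle the sign of $\partial_t h$ separately (the case analysis after \eqref{H17}, using $Q+\alpha\,\partial_t h=G^{i\bar j}\partial_ih\,\partial_{\bar j}h\ge0$): note that the gradient can be small while $Q$ is large when $\partial_t h<0$, which is precisely the regime your inequality cannot see. With the full square retained and this case analysis added, your outline becomes the paper's proof.
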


\begin{proof} 
Denote $h = \log u$ and consider the quantity 
\[ Q = G^{i\bar j} \partial_i h \partial_{\bar j} h 
        - \alpha \partial_t h. \]
For convenience we shall write 
$h_i = \partial_i h$, $h_{i\bj} = \partial_\bj \partial_i h$, 
$h_{i\bj k} = \nabla_k \nabla_{\bj} \nabla_i h$, etc.  

It follows form straightforward calculations that
\begin{equation}
\label{H6}
\begin{aligned}
    Q_t = 
    2 \fRe\{G^{i\bar j} h_{ti} h_{\bar j}\}  - \alpha h_{tt},
\end{aligned}
\end{equation}
\begin{equation}
\label{H7}
\begin{aligned}
    G^{i\bar j} Q_{i\bar j} 
    = \,& G^{i\bar j} G^{k\bar l} (h_{k\bar j} h_{\bar l i} 
    + h_{ki} h_{\bar l\bar j} + h_{ki \bar j} h_{\bar l} + h_{k} h_{\bar l i \bar j}) 
    - \alpha G^{i\bar j} h_{t i\bar j} + H 
    \end{aligned}
\end{equation}
where, for any constant $\epsilon > 0$,  
\begin{equation}
\label{H8}
\begin{aligned}
  H := \,& G^{i\bar j} (\partial_i \partial_{\bar j} G^{k\bar l} h_{k} h_{\bar l} 
              + \partial_i G^{k\bar l} h_{k\bar j} h_{\bar l} 
              + \partial_{i} G^{k\bar l} h_{k} h_{\bar l\bar j} 
              + \partial_{\bar j} G^{k\bar l} h_{\bar l} h_{ki} 
              + \partial_{\bar j} G^{k\bar l} h_{k} h_{\bar l i}) \\
    \geq \,& - \epsilon G^{i\bar j} G^{k\bar l} (h_{i\bar l} h_{k\bar j} + h_{ik} h_{\bar j \bar l})   
     - \Big(C + \frac{1}{2 \epsilon}\Big) G^{i\bar j} h_i h_{\bj}  
\end{aligned}
\end{equation} 
by Schwarz inequality.

From equation~\eqref{H1} we see that 
\begin{equation}
\label{H5}
\begin{aligned}
    Q = ( 1 - \alpha ) h_t -  G^{i\bar j} h_{i\bar j} - \chi^i h_i - \bar \chi^i h_{\bar i} - \chi^0.
\end{aligned}
\end{equation}
We derive using Schwarz inequality
\begin{equation}
\label{H14}
\begin{aligned}
 G^{i\bar j } h_{ti\bar j} 
    = \,& (1 - \alpha) h_{tt} - (Q + \chi^i h_i + \bar \chi^i h_{\bar i} +\chi^0)_t 
             -  \partial_t G^{i\bar j} h_{i\bar j} \\
 \leq \, &  (1 - \alpha) h_{tt} - Q_t - \chi^i h_{i t} - \bar \chi^{i} h_{\bi t} 
           +  \epsilon |\partial \bpartial h|^2 + C |\partial h|^2 + C
\end{aligned}
\end{equation}
where $|\partial h|^2 =g^{i\bar j} h_i h_{\bj}$, 
$|\partial \bpartial h|^2 = g^{i\bar j} g^{k\bar l} h_{i\bar l} h_{k\bar j}$, and 
\begin{equation}
\label{H9}
\begin{aligned}
   \fRe\{G^{i\bar j} G^{k\bar l} h_{ki\bar j} h_{\bar l}\} 
   \geq \,&  \fRe\{G^{i\bar j} G^{k\bar l} h_{i\bj k} h_{\bar l}\} 
                  - \epsilon G^{i\bar j} G^{k\bar l} h_{i\bar l} h_{k\bar j}  
                  - \frac{C}{\epsilon} G^{i\bar j} h_i h_{\bj} \\
   \geq \,& (1 - \alpha) \fRe\{G^{i\bj} h_{t i} h_{\bj}\} - \fRe\{G^{i \bj} Q_i h_{\bj}\} \\
             & - \epsilon G^{i\bar j} G^{k\bar l} (h_{i\bar l} h_{k\bar j} + h_{ik} h_{\bar j \bar l})
                - C G^{i\bar j} h_i h_{\bj} - C |\nabla \chi^0| \sum G^{i\bi}. 
\end{aligned}
\end{equation}
Note that by \eqref{H1.5},
\[ \lambda |\partial h|^2 \leq G^{i\bar j} h_i h_{\bj} \leq \Lambda |\partial h|^2, \;\;
 \lambda^2 |\partial \bpartial h|^2 \leq G^{i\bar j} G^{k\bar l} h_{i\bar l} h_{k\bar j} 
         \leq \Lambda^2 |\partial \bpartial h|^2.  \]
Similarly, for $|\partial \partial h|^2 = g^{i\bar j} g^{k\bar l} h_{ik} h_{\bj\bl}$,
 \[ \lambda^2 |\partial \partial h|^2 \leq G^{i\bar j} G^{k\bar l} h_{ik} h_{\bj\bl} 
         \leq \Lambda^2 |\partial \partial h|^2.  \]
 We shall use these facts without further references.

From~\eqref{H6}, ~\eqref{H14} and \eqref{H9} we obtain
\begin{equation}
\label{H7.5}
\begin{aligned}
 2 \fRe\{G^{i\bar j} G^{k\bar l} h_{ki \bar j} h_{\bar l}\} 
    - \alpha G^{i\bar j} h_{t i\bar j} 
\geq  \,& Q_t - 2 \fRe\{G^{i \bj} Q_i h_{\bj}\} 
               + \alpha (\chi^i h_{i t} +  \bar \chi^{i} h_{\bi t}) \\
            & -  C\epsilon |\nabla^2 h|^2 - C |\partial h|^2 - C 
    \end{aligned}
\end{equation}
where $|\nabla^2 h|^2 = |\partial \bpartial h|^2 + |\partial \partial h|^2$. 

Suppose now that $\tilde Q := t Q$ attains its maximum in $M \times [0,T']$ at a point 
$(x_0,t_0)$ in  $M \times (0,T']$ where $0 < T' < T$. 
We have at $(x_0,t_0)$,
\begin{equation}
\label{H3}
\begin{aligned}
     \alpha h_{kt}  = \,& \partial_k  G^{i\bar j} h_i h_{\bj} 
                                    + G^{i\bar j} (h_{ik} h_{\bj} + h_i h_{k\bj}), \;\; 1 \leq k \leq n
\end{aligned}
\end{equation}
which yields
\begin{equation}
\label{H3.5}
\begin{aligned}
  \alpha  |\chi^i h_{i t} + \bar \chi^{i} h_{\bi t}| 
    \leq \,& \epsilon |\nabla^2 h|^2 + C |\partial h|^2
      \end{aligned}
\end{equation}
and, by \eqref{H7}, ~\eqref{H8}, \eqref{H7.5} and \eqref{H3.5},
\begin{equation}
\label{H15}
\begin{aligned}
 0 \geq \,& G^{i\bar j} \tilde Q_{i\bar j} - \tilde Q_t 
   \geq  (\lambda^2 - C \epsilon) t |\nabla^2 h|^2 -  C t |\partial h|^2 - Q - C t.
\end{aligned}
\end{equation}
By \eqref{H5} and arithmetic-geometric inequality,
\begin{equation}
 \label{H16}
 \begin{aligned}
|\partial \bar \partial h|^2 
   \geq \,& \frac{1}{\Lambda^2} G^{i\bar j} G^{k\bar l} h_{i\bl} h_{k\bj} 
                 \geq \frac{1}{n \Lambda^2} (G^{i\bar j} h_{i\bar j})^2 \\
    \geq \,& c_0 (Q + (\alpha - 1) h_t)^2 
                 - C |\partial h|^2 - C  \\
        =  \,& c_0 (Q + (\alpha - 1) h_t)^2 - C_1 Q - C_2 \alpha h_t - C
 \end{aligned}
 \end{equation}
We derive from \eqref{H15} that at $(x_0, t_0)$, 
\begin{equation}
\label{H17}
\begin{aligned}
 (\tilde Q +  (\alpha - 1) h_t t_0)^2 + C \alpha h_t t_0^2 \leq C (1 + t_0^2).
\end{aligned}
\end{equation}
This immediately gives $\tilde Q \leq C (1 + t_0)$ provided that $h_t \geq 0$. 

Assume now $h_t < 0$ so $- h_t \leq Q + (\alpha - 1) h_t$. From \eqref{H17} we obtain 
\[ h_t^2 + C \alpha h_t \leq C (1 + t_0^{-2}). \]
Therefore, $ - \alpha h_t \leq C + C t_0^{-1}$. We derive  
$\tilde Q \leq C (1 + t_0)$ from \eqref{H17} again. 

Finally, for any $x \in M$ and $0 < t < T$, taking $T' = t$ we obtain
\begin{equation}
    \label{H22}
    \begin{aligned}
        T' Q(x, T') \leq \tilde Q (x_0, t_0) \leq C (1 + t_0) \leq C (1 + T'). 
    \end{aligned}
\end{equation}
This proves \eqref{H2}. 
\end{proof}

It is often convenient to use the following equivalent form of Lemma~\ref{lemma2.1}.

\begin{lemma}
\label{lemma2.2}
Let $u \in C^{3,2} (M\times [0,T))$ be a positive solution of \eqref{H1}, $\alpha > 1$.
There are constants $C_1$ and $C_2$ such that 
\begin{equation}
\label{H2a}
\begin{aligned}
|\partial \log u|^2 - \alpha \partial_t \log u
     \leq C_1 + \frac{C_2}{t}, \;\; 0 < t < T. 
\end{aligned}
\end{equation}
\end{lemma}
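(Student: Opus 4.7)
The statement of Lemma~\ref{lemma2.2} is obtained from Lemma~\ref{lemma2.1} by setting $h = \log u$, so that $\frac{1}{u^2} G^{i\bj} \partial_i u \partial_\bj u = G^{i\bj} h_i h_\bj$ and $\partial_t u/u = h_t$, and then replacing the ``$G$-norm'' of the gradient by the background norm $|\partial h|^2 = g^{i\bj} h_i h_\bj$. Since these two quantities are only equivalent but not equal, a direct algebraic deduction from Lemma~\ref{lemma2.1} costs a factor of $\lambda^{-1}$ in front of $\alpha$, which changes the conclusion. The plan is therefore to rerun the argument of Lemma~\ref{lemma2.1} verbatim with the modified auxiliary quantity
\[ \tilde Q = |\partial h|^2 - \alpha h_t, \qquad h = \log u, \]
and use \eqref{H1.5} at each step to absorb the mismatches between $g^{i\bj}$ and $G^{i\bj}$.

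\textbf{Key steps.} Substituting $h = \log u$ in \eqref{H1} gives
\[ h_t = G^{i\bj}(h_{i\bj} + h_i h_\bj) + \chi^i h_i + \bar\chi^i h_\bi + \chi^0. \]
Because the Chern connection satisfies $\nabla g \equiv 0$, in normal coordinates at any point one has
\[ \tilde Q_t = 2\fRe\{g^{i\bj} h_{ti} h_\bj\} - \alpha h_{tt}, \]
\[ G^{i\bj} \tilde Q_{i\bj} = G^{i\bj} g^{k\bl}\bigl(h_{k\bj} h_{\bl i} + h_{ki} h_{\bj\bl} + h_{ki\bj} h_\bl + h_k h_{\bl i\bj}\bigr) - \alpha G^{i\bj} h_{ti\bj} + \tilde H, \]
where $\tilde H$ gathers curvature terms of $g$ and is bounded by $C G^{i\bj} h_i h_\bj \le C\Lambda|\partial h|^2$ just as in \eqref{H8}. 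Differentiating the equation in $t$ and commuting covariant derivatives produces the analogue of \eqref{H14} and \eqref{H9}. Now suppose $t\tilde Q$ attains its maximum on $M\times[0,T']$ at an interior point $(x_0,t_0)$. The first-order condition
\[ \alpha h_{tk} = \partial_k |\partial h|^2 = g^{i\bj}(h_{ik} h_\bj + h_i h_{k\bj}), \qquad 1\le k\le n, \]
together with Schwarz, controls $|\alpha\chi^i h_{it} + \alpha\bar\chi^i h_{\bi t}|$ by $\epsilon|\nabla^2 h|^2 + C|\partial h|^2$, exactly as in \eqref{H3.5}. Combining these inputs yields, in parallel to \eqref{H15},
\[ 0 \ge (\lambda^2 - C\epsilon)\, t_0 |\nabla^2 h|^2 - C t_0|\partial h|^2 - \tilde Q - C t_0. \]
Using the equation to substitute $G^{i\bj} h_{i\bj} = h_t - G^{i\bj} h_i h_\bj - \chi^i h_i - \bar\chi^i h_\bi - \chi^0$ and applying the arithmetic-geometric mean inequality in the form \eqref{H16} produces a quadratic-in-$\tilde Q$ inequality at $(x_0,t_0)$ of the same shape as \eqref{H17}. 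The familiar case split on the sign of $h_t$ then gives $\tilde Q(x_0,t_0) \le C(1+t_0)$, whence $\tilde Q(x,T') \le C(1+T')$ for every $T'\in(0,T)$, which is \eqref{H2a}.

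\textbf{Main obstacle.} The only real technical issue is that the operator $G^{i\bj}\partial_i\partial_\bj$ and the quantity $|\partial h|^2$ involve different metrics, so the Schwarz-type step in \eqref{H9} that absorbs the cross term $\fRe\{G^{i\bj}G^{k\bl}h_{ki\bj} h_\bl\}$ against $G^{i\bj}G^{k\bl}h_{i\bl} h_{k\bj}$ must now be reworked with one factor of $G$ replaced by $g$. This is where \eqref{H1.5} is used repeatedly, in both directions, to convert $G^{i\bj}g^{k\bl}(\cdots)$ back into pure $G^{i\bj}G^{k\bl}(\cdots)$ (or into $g^{i\bj}g^{k\bl}(\cdots)$) at the cost of constants depending only on $\lambda$ and $\Lambda$. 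Once this bookkeeping is in place, every estimate from the proof of Lemma~\ref{lemma2.1} carries over with at most modified constants, and the conclusion follows.
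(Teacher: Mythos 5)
You have not matched the paper's argument, and the substitute you propose does not work. The paper offers no new computation for Lemma~\ref{lemma2.2}: it is stated as the ``equivalent form'' of Lemma~\ref{lemma2.1}, i.e.\ precisely the one-line deduction you dismissed. Since $\lambda|\partial h|^2\le G^{i\bj}h_ih_{\bj}$ by \eqref{H1.5}, estimate \eqref{H2} gives \eqref{H2a} with $\alpha$ replaced by $\alpha/\lambda$ (equivalently, for $\alpha>1/\lambda$, after renaming constants), and this weaker form is all that is used later: in Theorem~\ref{theorem-H} and in the convergence argument the constants are allowed to depend on $\alpha,\lambda,\Lambda$, and any one admissible value of $\alpha$ suffices. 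The stronger statement you set out to reprove --- \eqref{H2a} for \emph{every} $\alpha>1$ with the background gradient and with constants depending only on the coefficients --- is in fact false when $\lambda<1$: already for $G^{i\bj}=\lambda g^{i\bj}$ with no lower order terms, the model solution $u=t^{-n}e^{-|z|^2/(\lambda t)}$ on $\bfC^n$ (and, on compact $M$, the fundamental solution of $\partial_t-\lambda\Delta$ shifted in time so as to be smooth up to $t=0$) has $|\partial\log u|^2-\alpha\,\partial_t\log u=(\lambda^{-1}-\alpha)\,|z|^2/(\lambda t^2)+\alpha n/t$, which for $1<\alpha<\lambda^{-1}$ grows like $t^{-2}$ at fixed $z\neq 0$ and so cannot be bounded by $C_1+C_2/t$ with solution-independent constants. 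So the factor $\lambda^{-1}$ you objected to is not an artifact of a crude deduction; it is genuinely there, and no rerun of the proof can remove it.

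The concrete gap in your rerun is at the analogue of \eqref{H9}, not at the places you flag. With the paper's $Q=G^{i\bj}h_ih_{\bj}-\alpha h_t$, differentiating the equation converts the dangerous term $\partial_k(G^{i\bj}h_ih_{\bj})$ into $Q_k+\alpha h_{tk}$, and $Q_k=0$ at the maximum of $tQ$ (this is also what makes \eqref{H3} usable); that is exactly why no term of size $|\nabla^2h|\,|\partial h|^2$ ever appears. With your quantity $g^{i\bj}h_ih_{\bj}-\alpha h_t$ the critical-point relation only kills $\partial_k(g^{i\bj}h_ih_{\bj})$, while the differentiated equation still produces $\partial_k(G^{i\bj}h_ih_{\bj})$; the mismatch $\fRe\{g^{k\bl}h_{\bl}\,(G^{i\bj}-g^{i\bj})(h_{ik}h_{\bj}+h_ih_{\bj k})\}$ is a \emph{signed} term of size $|\nabla^2 h|\,|\partial h|^2$, not a quadratic form, so \eqref{H1.5} cannot trade it between $g$ and $G$; Cauchy--Schwarz turns it into $\epsilon|\nabla^2h|^2+C\epsilon^{-1}|\partial h|^4$, and the quartic error is of the same order as the good term $c_0(G^{i\bj}h_{i\bj})^2$ coming from \eqref{H16}, but with a large constant, while the good term degenerates when $h_t>0$ (e.g.\ $h_t\approx\lambda|\partial h|^2$). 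Hence the claim that ``every estimate carries over with at most modified constants'' fails precisely where the structure of $Q$ matters, and your ``main obstacle'' paragraph addresses only the harmless equivalence of the positive quadratic pieces. The correct proof is the paper's: quote Lemma~\ref{lemma2.1} and rescale, accepting that $\alpha$ (or the constants) pick up the ellipticity constant.
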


With the aid of Lemma~\ref{lemma2.2} we may derive the following Harnack inequality
as in Li-Yau~\cite{LY86}. For completeness we include an outline of the proof. 

\begin{theorem}
\label{theorem-H}
Let $u \in C^{3,2} (M \times [0,T))$ be a positive solution of \eqref{H1}, $\alpha > 1$.
Then 
\begin{equation}
    \sup_{z \in M} u (z, t) \leq C(t, \tau; \alpha) \inf_{z \in M} u(z, \tau)
\end{equation}
for $0 < t < \tau < T$, where
\begin{equation}
    C(t, \tau, \alpha) = \Big(\frac{\tau}{t}\Big)^{\beta} 
          \exp \Big(\frac{C_1}{\tau - t} + C_2 (\tau - t)\Big)
\end{equation}
and $C_1$, $C_2$, $\beta$ are positive constants depending on $n$, $\lambda$, $\Lambda$, and
\[ |G^{i\bj}|_{C^{3,1} (M \times [0, T))}, \;  |\chi^i|_{C^{1,1} (M \times [0,T))},  
    \; |\chi^0|_{C^{1,1} (M \times [0,T))} \]
as well as the geometric quantities of $M$.
\end{theorem}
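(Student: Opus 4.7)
The plan is to derive Theorem~\ref{theorem-H} from Lemma~\ref{lemma2.2} via the classical Li--Yau integration argument along a space-time path, without using any further structural properties of equation~\eqref{H1}. Setting $h = \log u$, Lemma~\ref{lemma2.2} reads
\[ \partial_s h \geq \frac{|\partial h|^2}{\alpha} - \frac{C_1}{\alpha} - \frac{C_2}{\alpha s}, \quad 0 < s < T. \]
Fix $x, y \in M$ and $0 < t < \tau < T$, and let $\eta : [t, \tau] \to M$ be a minimizing geodesic from $x$ to $y$ for the Riemannian metric underlying $\omega$, parametrized at constant speed, so $|\dot\eta|_g = d(x, y)/(\tau - t)$.

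Next I differentiate along the path: $\frac{d}{ds} h(\eta(s), s) = \partial_s h + \dot\eta \cdot \nabla h$. Since $h$ is real and $|\nabla h|_g^2 = 2 g^{i\bj} h_i h_{\bj} = 2 |\partial h|^2$, Cauchy--Schwarz gives $|\dot\eta \cdot \nabla h| \leq \sqrt{2}\, |\dot\eta|_g |\partial h|$. Substituting the differential Harnack inequality and completing the square in $|\partial h|$ produces
\[ \frac{d}{ds} h(\eta(s), s) \geq - \frac{\alpha}{2} |\dot\eta|_g^2 - \frac{C_1}{\alpha} - \frac{C_2}{\alpha s}. \]
Integrating from $t$ to $\tau$ and using $\int_t^\tau |\dot\eta|_g^2\, ds = d(x,y)^2/(\tau - t) \leq d^2/(\tau - t)$ (where $d$ is the diameter of $M$), we obtain
\[ h(y, \tau) - h(x, t) \geq - \frac{\alpha\, d^2}{2(\tau - t)} - \frac{C_1 (\tau - t)}{\alpha} - \frac{C_2}{\alpha} \log \frac{\tau}{t}. \]
Exponentiating, then taking supremum in $x$ and infimum in $y$, yields the asserted Harnack inequality with $\beta = C_2/\alpha$ and the two remaining terms giving the claimed form of $C(t, \tau; \alpha)$.

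The argument is conceptually straightforward once Lemma~\ref{lemma2.2} is in hand: all contributions of the lower-order coefficients $\chi^i$, $\bar \chi^i$, $\chi^0$ have already been absorbed into $C_1$ and $C_2$ there, and the uniform ellipticity hypothesis \eqref{H1.5} enters only implicitly through those constants. The main point requiring care, rather than a substantive obstacle, is the Cauchy--Schwarz step comparing the real pairing $\dot\eta \cdot \nabla h$ to $|\partial h|$ with the sharp constant $\sqrt{2}$, which is precisely what makes the coefficient $\alpha/2$ appear in front of $d(x,y)^2/(\tau - t)$ after completing the square. A minor issue --- that the minimizing geodesic need not be smooth at the cut locus of $x$ --- is dispatched either by a standard approximation argument or by applying the pointwise inequality along a piecewise smooth length-minimizing curve.
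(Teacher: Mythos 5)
Your proposal is correct and follows essentially the same route as the paper: starting from Lemma~\ref{lemma2.2}, integrating the differential Harnack inequality along a space-time path over a minimizing geodesic, using Cauchy--Schwarz/completing the square to absorb the gradient term, and bounding the geodesic speed by the diameter, which yields exactly the stated form of $C(t,\tau;\alpha)$ with $\beta = C_2/\alpha$. The only cosmetic differences are the parametrization of the path (on $[t,\tau]$ rather than $[0,1]$) and your cut-locus remark, which is unnecessary since a minimizing geodesic is itself smooth.
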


\begin{proof}
For any two points $z, w \in M$, let $\gamma: [0, 1] \to M$ be a geodesic with  
$\gamma (0) = z$ and $\gamma(1)= w$. 
Define $\eta: [0, 1] \to M \times [t, \tau]$ 
by 
\[ \eta (s) = (\gamma (s), (1- s) \tau + s t), \; 0 \leq s \leq L. \]
For $h = \log u$ as before, by Schwarz inequality and Lemma \ref{lemma2.2} we have
\begin{equation}
    \begin{aligned}
     \log \frac{u (w, t)} {u(z, \tau)} 
        = \,& \int_0^1 \frac{d}{ds} h(\eta(s)) ds\\
        = \,& \int_0^1 (\langle \dot {\gamma}, \partial h \rangle_g - (\tau - t) h_t )ds \\
    \leq \,& \int_0^1 
                   \Big(\frac{\alpha |\dot{\gamma}|^2}{\tau - t} 
                   + \frac{\tau - t}{\alpha} (|\partial h |^2 - \alpha h_t)\Big) ds\\
    \leq \,& \frac{\alpha |\dot{\gamma}|^2}{\tau - t} 
               + \frac{\tau - t}{\alpha} \int_0^1 \Big(C + \frac{C}{(1-s) \tau + s t}\Big) ds \\
       = \,& \frac{\alpha |\dot{\gamma}|^2}{\tau - t}  + \frac{\tau - t}{C^{-1} \alpha}   
                +\frac{C}{\alpha} \log \frac{\tau}{t}. 
    \end{aligned}
\end{equation}
Note that $|\dot{\gamma}|$ is dominated by the diameter of $M$. The proof is complete.
\end{proof}

We can now use a standard iteration argument to prove the convergence of the 
normalized solution in Theorem~\ref{theorem-I1}. Assume that $X$, $\chi$ and $\psi$ are independent of $\phi$ (but may still depend on $\partial \phi, \bpartial \phi$). 
Let $\omega (t)$ be the oscillation of $\phi_t$ 
\[ \omega(t) := \sup\limits_{z \in M} \phi_t (z, t) - \inf\limits_{z \in M} \phi_t (z, t) \]
and for $k = 0, 1, 2, \ldots$,
\[ \begin{aligned}
     v_k (z, t) = \,& \sup\limits_{y \in M} \phi_t (y, k) - \phi_t (z, k + t), \\
    w_k (z, t) = \,& \phi_t (z, k + t) - \inf\limits_{y \in M} \phi_t (y, k). 
    \end{aligned} \]
It is clear that both $v_k$ and $w_k$ satisfy the following equation
\begin{equation}
\label{C2}
\begin{aligned}
    \frac{\partial \varphi}{\partial t} (z, t) 
      = G^{i\bj}(x, k+t) \partial_i \partial_{\bj} \varphi
       +\chi^i (x, k+t) \partial_i \varphi + \bar \chi^i (x, k+t) \partial_\bi \varphi 
\end{aligned}
\end{equation}

Suppose $\phi_t (z, k)$ is not constant so that $v_k$ and $w_k$ are positive somewhere in $M$ at time $t=0$.
Note that 
\[ \chi^0 = F^{I\bJ} X_{I\bJ, \phi} +  G^{i\bj} \chi_{i\bar j,\phi} - \psi_{\phi} \equiv 0. \]
 By the maximum principle $v_k$ and $w_k$ are both positive on $M$ for all $t > 0$. 
We may apply Theorem~\ref{theorem-H} to $v_k$ and $w_k$ 
with $t =\frac{1}{2}$ and $\tau = 1$ to derive
\[  \begin{aligned}
\sup\limits_{z \in M} \phi_t (z, k) - \inf\limits_{z \in M} \phi_t \Big(z, k + \frac{1}{2}\Big) 
    \leq \,& C \Big(\sup\limits_{z \in M} \phi_t (z, k) - \sup\limits_{z \in M} \phi_t (z, k+1)\Big), 
    \end{aligned} \]
\[    \begin{aligned}
\sup\limits_{z \in M} \phi_t \Big(z, k + \frac{1}{2}\Big) - \inf\limits_{z \in M} \phi_t (z, k) 
    \leq \,& C \Big(\inf\limits_{z \in M} \phi_t (z, k+1) - \inf\limits_{z \in M} \phi_t (z, k)\Big)
    \end{aligned} \]
where $C:=C(\frac{1}{2},1)$. 
Adding the two inequalities yields
\[  \omega (k) + \omega \Big(k+\frac{1}{2}\Big) \leq C (\omega (k) - \omega (k+1)). \]
It follows that
\begin{equation}
\label{C4}
\omega(k+1) \leq \delta \omega(k), \;\; \delta < 1. 
\end{equation}
This still holds by the by maximum principle if $\phi_t (z, k)$ is constant. 
Using the standard iteration argument we derive 
$\omega (t) \leq C e^{-\beta t}$ for $\beta = - \log {\delta}$. 

Note that $\tilde{\phi}_t (y, t) = 0$ for some $y \in M$
since $\int_M \tilde{\phi}_t \omega^n = 0$. Therefore,
\begin{equation}
\label{C5}
    \begin{aligned}
    |\tilde{\phi}_t (z, t)| =  |\tilde{\phi}_t (z, t) - \tilde{\phi}_t (y, t)|
    \leq \omega (t) \leq Ce^{-\beta t}
    \end{aligned}
\end{equation}
which implies
\[ \frac{\partial }{\partial t} (\tilde{\phi} (z, t) - C \beta^{-1} e^{-\beta t}) \leq 0. \]
Consequently, the limit 
\[ \varphi (z) := \lim_{t \rightarrow \infty} \tilde{\phi} (z, t) 
     = \lim_{t \rightarrow \infty} (\tilde{\phi} (z, t) - C \beta^{-1} e^{-\beta t}) \] 
exists for all $z \in M$ as $\tilde{\phi} (z, t) - C \beta^{-1} e^{-\beta t}$ is uniformly bounded.
Finally, from the {\em a priori} estimates we see that $\varphi \in C^{\infty} (M)$ and is an admissible solution of the elliptic equation~\eqref{I4}. 
The proof of Theorem~\ref{theorem-I1} is complete.

\bigskip

\bibliographystyle{plain}

\end{document}